\numberwithin{equation}{section}
\newtheorem{theorem}{Theorem}[section]
\newtheorem{definition}[theorem]{Definition}
\newtheorem{corollary}[theorem]{Corollary}
\newtheorem{lemma}[theorem]{Lemma}
\theoremstyle{remark}
\newtheorem{remark}[theorem]{Remark}
\newcommand{\R}{\mathbb{R}}
\newcommand{\C}{\mathbb{C}}
\begin{document}
\title[On totally isotropic Willmore two-spheres in $S^6$]{\bf{ Willmore surfaces in spheres via loop groups IV:  on totally isotropic Willmore two-spheres in $S^6$}}
\author{Peng Wang}
\maketitle

\begin{center}
{\bf Abstract}

\end{center}
Totally isotropic surfaces in $S^6$ are not necessarily Willmore surfaces.
Therefore it is  the  first goal of this paper to derive a geometric characterization of totally isotropic Willmore two-spheres in $S^6$. This will naturally yield to a description of such surfaces in terms of the loop group language.
Moreover, applying the loop group method, we also obtain an algorithm to construct totally isotropic Willmore two-spheres in $S^6$.
As an application, a new, totally isotropic Willmore two-sphere which is not S-Willmore (i.e., has no dual surface) in $S^6$ is constructed,   illustrating the theory of this paper.
\ \\

{\bf Keywords:}   Willmore surfaces;  totally isotropic Willmore two-spheres; normalized potential; Iwasawa decompositions.\\

MSC(2010): 58E20; 53C43;  	53A30;  	53C35

\tableofcontents

\section{Introduction}

Totally isotropic surfaces first appeared in the study of  the global geometry of surfaces in the famous work of Calabi \cite{Calabi}, where twistor bundle theory was applied to describe the geometry of minimal two-spheres in $S^n$. This led later to much progress in geometry and the theory of integrable systems (see for example \cite{Bryant1982} and  \cite{BR}).

A basic characterization of totally isotropic surfaces in $S^{2n}$ is that
they correspond to projections of holomorphic/anti-holomorphic curves in the twistor bundle  $\mathfrak{T}S^{2n}$ of  $S^{2n}$ \cite{Calabi}, \cite{Bryant1984}, \cite{Ejiri1988}, \cite{BR}, \cite{Mus1}, \cite{Mon}. And a well-known description of (branched) minimal two-spheres in $S^{2n}$ is that
they correspond to projections of horizontal holomorphic/anti-holomorphic curves in the twistor bundle  $\mathfrak{T}S^{2n}$ of  $S^{2n}$ \cite{Calabi}, \cite{Bryant1984}, \cite{BR}, \cite{BuGu}.

In the study of Willmore two-spheres, totally isotropic surfaces play an important role as well. First we note that isotropic properties are conformally invariant. This indicates that they are of interest in the conformal geometry of surfaces. Moreover, the classical work of Ejiri \cite{Ejiri1988} shows that isotropic surfaces in $S^4$ are automatically Willmore surfaces and furthermore they are Willmore surfaces with dual surfaces. He also showed that Willmore two-spheres in $S^4$ are either M\"{o}bius equivalent to minimal surfaces with planer ends in $\R^4$, or are isotropic two-spheres \cite{Ejiri1988} (See also \cite{Mus1}, \cite{Mon} and \cite{BFLPP}). This generalization of Bryant's seminal work on Willmore two-spheres \cite{Bryant1984}, \cite{Bryant1988}, shows the complexity of Willmore two-spheres in $S^n$, when $n>3$.

In \cite{Ejiri1988}, Ejiri also introduced the notion of S-Willmore surfaces. Roughly speaking, these surfaces can be viewed as Willmore surfaces for which there exist dual surfaces.  Note that by Bryant's classical work, every Willmore surface in $S^3$ has a dual Willmore surface \cite{Bryant1984}. But when the codimension is bigger than $1$, a Willmore surface may not have a  dual surface \cite{Ejiri1988} (See \cite{BFLPP}, and \cite{Ma2006} for the generalizations along this direction). Using the duality properties of S-Willmore surfaces, Ejiri  provided furthermore a classification of S-Willmore two-spheres in $S^{n+2}$ by constructing the holomorphic forms for these surfaces \cite{Ejiri1988}. Especially,
for Willmore two-spheres in $S^4$, a construction of a holomorphic 8-form indicates that these surfaces automatically  are
S-Willmore \cite{Ejiri1988}, \cite{Mus1}, \cite{Mon}, \cite{BFLPP}, \cite{Ma}.

In the end of Ejiri's paper he conjectured that all Willmore two-spheres in $S^{n+2}$ are S-Willmore.
 If  Ejiri's  conjecture is incorrect, that is, if some Willmore, but not S-Willmore two-spheres would exist, how would one  construct and characterize them?

In this paper,  we will give an answer to this question: we will construct a totally isotropic Willmore two-sphere in $S^6$ which is not S-Willmore. This answers  Ejiri's  conjecture in  an explicit  way. Moreover,  beyond the explicit construction of some new examples it is a main goal of this paper is to characterize all totally isotropic Willmore two-spheres in $S^6$ via their geometric properties and their normalized potentials.  This geometric description also  supplies the basis for the work  of \cite{Wang-1}, where we provide a coarse classification of Willmore two-spheres in spheres by using the loop group method for the construction of harmonic maps \cite{DPW}, \cite{BuGu}, \cite{Gu2002}, \cite{DoWa2}.

We point out  that, different from the case in $S^4$, where totally isotropic surfaces are automatically S-Willmore surfaces  and of finite uniton type, totally isotropic surfaces in $S^6$ are not even Willmore in general. (We refer to \cite{Uh}, \cite{BuGu}, \cite{DoWa2} and \cite{Wang-1} for the definition of uniton.) Moreover,  even  a totally isotropic Willmore surface in $S^6$ will, in general,  not be of finite uniton type. See Remark 2.5. \\

This paper is organized as follows.
In Section 2, we first recall some basic results about Willmore surfaces, and derive a new geometric description of isotropic Willmore two-spheres in $S^6$. Applying this description, and also collecting some basic loop theory, we obtain a description of the normalized potentials of totally isotropic Willmore two-spheres in $S^6$. The converse part, i.e., that generically such normalized potentials will always produce special totally isotropic Willmore surfaces of finite uniton type in $S^6$, as well as new examples, makes up the main content of Section 3. The main idea is to perform a concrete Iwasawa decomposition for the  normalized potentials under consideration to obtain the geometric properties of the corresponding Willmore surfaces. This also yields an algorithm to construct examples of Willmore surfaces. Illustrating this algorithm, we also construct concrete totally isotropic Willmore two-spheres in $S^6$. Since the computations are too technical, we put the computations of Iwasawa decompositions and  examples into two Appendixes.

\section{Isotropic Willmore two-spheres in $S^6$}
This section has two parts. In the first part, we will recall the basic theory about Willmore surfaces  and then focus our attention on isotropic Willmore surfaces in $S^6$. Restricting to two-spheres, by construction of holomorphic forms, we derive a description of the Maurer-Cartan form of isotropic Willmore two spheres in $S^6$. In the second part, we will recall the basic DPW methods as well as Wu's formula for harmonic maps in symmetric spaces. Then by the geometric description, applying Wu's formula, we will derive the form of the normalized potentials for the harmonic conformal Gauss map of isotropic Willmore two-spheres in $S^6$.

\subsection{Isotropic Willmore surfaces in $S^6$ and related holomorphic differentials}

\subsubsection{Willmore surfaces in spheres}
For completeness we first recall briefly the basic surface theory. For more details, we refer to Section 2 of \cite{DoWa1} and Section 2 of \cite{Wang-1} (see also \cite{BPP}, \cite{Ma}).

Let $\mathbb{R}^{n+4}_1$ be the Lorentz-Minkowski space equipped with the Lorentzian metric
\[\langle x,y\rangle=-x_{0}y_0+\sum_{j=1}^{n+3}x_jy_j=x^t I_{1,n+3} y,\ \  I_{1,n+3}=diag\left(-1,1,\cdots,1\right), \ \forall
  x,y\in\R^{n+4}.\]
We denote by $\mathcal{C}^{n+3}_+= \lbrace x \in \mathbb{R}^{n+4}_{1} |\langle x,x\rangle=0 , x_0 >0 \rbrace $
the forward light cone and by $Q^{n+2}=\mathcal{C}^{n+3}_+/ \R^+$  the  projective light cone. It is well-known that Riemannian space forms can be conformally embedded into $Q^{n+2}$ \cite{Bryant1984}, \cite{Ejiri1988}, \cite{Mus1}, \cite{Mon}, \cite{{BFLPP}},  \cite{BPP}.

For a  conformal immersion $y:M\rightarrow S^{n+2}$ one has a canonical lift $Y=e^{-\omega}(1,y)$ into $\mathcal{C}^{n+3}$ with respect to a local complex coordinate $z$ of the Riemann surface $M$, where $e^{2\omega}=2\langle y_z,y_{\bar{z}}\rangle$.
There exists a global bundle decomposition
\[
M\times \mathbb{R}^{n+4}_{1}=V\oplus V^{\perp}, ~\ \hbox{ with }\ V={\rm Span}\{Y,{\rm Re}Y_{z},{\rm Im}Y_{z},Y_{z\bar{z}}\},
\]
where $V^{\perp}$ denotes the orthogonal complement of $V$.  Let $V_{\mathbb{C}}$ and
$V^{\perp}_{\mathbb{C}}$ be the complexifications of $V$ and $V^{\perp}$.
Let $\{Y,Y_{z},Y_{\bar{z}},N\}$  be a  frame of
$V_{\mathbb{C}}$ such that
$
\langle N,Y_{z}\rangle=\langle N,Y_{\bar{z}}\rangle=\langle
N,N\rangle=0,\ \langle N,Y\rangle=-1.
$ Let $D$
denote the normal connection on $V_{\mathbb{C}}^{\perp}$. For any section $\psi\in
\Gamma(V_{\mathbb{C}}^{\perp})$ of the normal bundle
and a canonical lift $Y$ w.r.t $z$, we obtain
the structure equations:
\begin{equation}\label{eq-moving}
\left\{\begin {array}{lllll}
 Y_{zz}=-\frac{s}{2}Y+\kappa,\ \\
Y _{z\bar{z}}=-\langle \kappa,\bar\kappa\rangle Y+\frac{1}{2}N,\ \\
 N_{z}=-2\langle \kappa,\bar\kappa\rangle Y_{z}-sY_{\bar{z}}+2D_{\bar{z}}\kappa,\ \\
 \psi_{z}=D_{z}\psi+2\langle \psi,D_{\bar{z}}\kappa\rangle Y-2\langle
\psi,\kappa\rangle Y_{\bar{z}}, \ \\
\end {array}\right.
\end{equation}
Here $\kappa$ is  \emph{the conformal Hopf differential} of $y$ ,
and $s$ is \emph{the Schwarzian} of $y$.
The conformal Gauss, Codazzi and Ricci equations as integrability conditions are
as follows:
\begin{equation}\label{eq-integ}
\left\{\begin {array}{lllll}
 \frac{1}{2}s_{\bar{z}}=3\langle
\kappa,D_z\bar\kappa\rangle +\langle D_z\kappa,\bar\kappa\rangle,\\
{\rm Im}(D_{\bar{z}}D_{\bar{z}}\kappa+\frac{\bar{s}}{2}\kappa)=0,\\
   R^{D}_{\bar{z}z}\psi=D_{\bar{z}}D_{z}\psi-D_{z}D_{\bar{z}}\psi =
  2\langle \psi,\kappa\rangle\bar{\kappa}- 2\langle
  \psi,\bar{\kappa}\rangle\kappa.
\end {array}\right.
\end{equation}
Recall that $y$ is a Willmore surface if and only if the Willmore equation holds (\cite{BPP})
\begin{equation}\label{eq-willmore}
D_{\bar{z}}D_{\bar{z}}\kappa+\frac{\bar{s}}{2}\kappa=0.
\end{equation}
Another equivalent condition of $y$ being Willmore is the harmonicity of the conformal Gauss map $Gr:M\rightarrow
Gr_{1,3}(\mathbb{R}^{n+4}_{1})=SO^+(1,n+3)/SO^+(1,3)\times SO(n)$ of $y$ \cite{Bryant1984}, \cite{Ejiri1988}, \cite{Ma}:
\[
Gr:=Y\wedge Y_{u}\wedge Y_{v}\wedge N=-2i\cdot Y\wedge Y_{z}\wedge
Y_{\bar{z}} \wedge N.
\]
A local lift of $Gr$ is chosen as \begin{equation}\label{F}
F:=\left(\frac{1}{\sqrt{2}}(Y+N),\frac{1}{\sqrt{2}}(-Y+N),e_1,e_2,\psi_1,\cdots,\psi_n\right)��  U\rightarrow  SO^+(1,n+3)
\end{equation}with its Maurer-Cartan form
 \[\alpha=F^{-1}dF=\left(
                   \begin{array}{cc}
                     A_1 & B_1 \\
                   -B_1^tI_{1,3} & A_2 \\
                   \end{array}
                 \right)dz+\left(
                   \begin{array}{cc}
                     \bar{A}_1 & \bar{B}_1 \\
                    -\bar{B}_1^tI_{1,3}& \bar{A}_2 \\
                   \end{array}
                 \right)d\bar{z},\]
\begin{equation}\label{eq-b1} B_1=\left(
      \begin{array}{ccc}
         \sqrt{2} \beta_1 & \cdots & \sqrt{2}\beta_n \\
         -\sqrt{2} \beta_1 & \cdots & -\sqrt{2}\beta_n \\
        -k_1 & \cdots & -k_n \\
        -ik_1 & \cdots & -ik_n \\
      \end{array}
    \right).   \end{equation} Here $\{\psi_j\}$ is an orthonormal basis of $V^{\perp}$ on $U$ and
  \[\kappa=\sum_j k_j\psi_j ,\ D_{\bar{z}}\kappa=\sum_j\beta_j\psi_j,\ k^2=\sum_j|k_j|^2.\]

\subsubsection{Isotropic Willmore surfaces in $S^6$}
Recall that $y$ is totally isotropic if and only if all the derivatives of $y$ with respect to $z$ are isotropic, that is,
\[\left\langle Y_z^{(m)}, Y_{z}^{(n)} \right\rangle =0 ~~\hbox{ for all }~m,\ n \in \mathbb{Z}^+.\] Here $Y_{z}^{(m)}$ means taking $m$ times derivatives of $Y$ by $z$.
We refer to \cite{Calabi}, \cite{Bryant1982}, \cite{Ejiri1988}, \cite{Ma} and \cite{DoWa1} for more discussions on isotropic surfaces. A well-known result states that $y$ is totally isotropic if and only if $y$ is the projection of a holomorphic or anti-holomorphic curve into the  twistor bundle $\mathfrak{T}S^{2n}$ of $S^{2n}$ \cite{Calabi}, \cite{Ejiri1988}. For the basic theory about twistor bundles, we refer to \cite{BR}.

Let $y$ be a Willmore surface with an isotropic Hopf differential, i.e., $\langle\kappa,\kappa\rangle\equiv0$. Note that one derives straightforwardly that
\[\langle\kappa, D_{z} \kappa\rangle=\langle\kappa, D_{\bar{z}} \kappa\rangle=0\] by differentiating $\langle\kappa,\kappa\rangle=0$. Applying the Willmore equation \eqref{eq-willmore}, we  also have \[\langle D_{\bar{z}}\kappa, D_{\bar{z}} \kappa \rangle\equiv 0.\]

For isotropic Willmore surfaces, Xiang Ma introduced several  holomorphic differentials, see Theorem 5.4 of \cite{Ma}. For our case, we only need that
\begin{equation}\label{eq-holo-d1}
\Omega dz^4:=\langle D_{\bar{z}}\kappa,D_{z} \kappa \rangle dz^4
\end{equation}
is a globally defined holomorphic differential on $M$. To show that
$\Omega dz^4$ is holomorphic is a direct computation by using  $\langle\kappa,\kappa\rangle=0$, Willmore equations and Ricci equations (see also \cite{Ma}). Then, if $M=S^2$, we will have that \[\langle D_{\bar{z}}\kappa, D_{z} \kappa \rangle\equiv 0.\]

 Now assume that $y$ is not S-Willmore, then $D_{\bar{z}}\kappa$ is not parallel to $\kappa$. So  $D_{\bar{z}}\kappa$ and $ \kappa$ span a two-dimensional isotropic subspace $\hbox{Span}_{\C}\{\kappa,D_{\bar{z}}\kappa\}$. Since $D_{z}\kappa$ is perpendicular to $\kappa$ and $D_{\bar{z}}\kappa$, $D_{z}\kappa$ is contained in $\hbox{Span}_{\C}\{\kappa,D_{\bar{z}}\kappa\}$. As a consequence, we also have $\langle D_{z}\kappa,D_{z}\kappa\rangle=0$.

Summing up, we obtain the following
\begin{theorem} \label{thm-iso-willmore-1}
Let $y$ be a Willmore two sphere in $S^6$ with isotropic Hopf differential, i.e., $\langle\kappa,\kappa\rangle=0$. If $y$ is not S-Willmore, then $y$ is totally isotropic (and hence full) in  $S^6$. Moreover, locally there exists an isotropic frame $\{E_1, E_2\}$ of the normal bundle $V^{\perp}_{\C}$ of $y$ such that
\begin{equation}\label{eq-normal-bundle}
\left\{\begin{split}
&\kappa,\ D_{z}\kappa,\ D_{\bar{z}}\kappa\in \hbox{Span}_{\C}\{E_1, E_2\},\\
&\langle E_i, E_j\rangle=0, \langle E_i, \bar{E}_j\rangle=\delta_{ij}, i,j=1,2.\\
&  D_{z} E_i\in \hbox{Span}_{\C}\{E_1, E_2\},\ D_{\bar{z}} E_i\in \hbox{Span}_{\C}\{E_1, E_2\},\ i,j=1,2.\\
                 \end{split}\right.
\end{equation}
That is, the normal connection is block diagonal under the frame $\{E_1, E_2, \bar{E}_1, \bar{E}_2 \}$.
\end{theorem}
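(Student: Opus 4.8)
The plan is to package all the relevant normal data into the rank-two subbundle $W := \hbox{Span}_{\C}\{\kappa, D_{\bar z}\kappa\}$ of the complexified normal bundle $V^\perp_\C$, and to show that $W$ is simultaneously \emph{parallel} for the normal connection and \emph{stable} under the ordinary $z$-derivative. Since $y$ is not S-Willmore, $\kappa$ and $D_{\bar z}\kappa$ are independent on an open dense set, where (by the discussion preceding the statement) $W$ is totally isotropic, i.e. $\langle\kappa,\kappa\rangle = \langle\kappa, D_{\bar z}\kappa\rangle = \langle D_{\bar z}\kappa, D_{\bar z}\kappa\rangle = 0$. As $V^\perp_\C$ has complex rank $4$ with nondegenerate complex-bilinear form, $W$ is then a \emph{maximal} isotropic subspace, so $W = W^\perp$. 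Combined with $\Omega\equiv 0$ on $S^2$, this forces $D_z\kappa\perp\kappa, D_{\bar z}\kappa$, hence $D_z\kappa\in W^\perp = W$ and $\langle D_z\kappa, D_z\kappa\rangle = 0$; thus $\kappa, D_z\kappa, D_{\bar z}\kappa\in W$. (Near the isolated points where the data degenerates $W$ extends by the usual removable-singularity argument, which is why the frame is only claimed locally.)

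The first genuine step is to prove $D_z\Gamma(W)\subseteq\Gamma(W)$ and $D_{\bar z}\Gamma(W)\subseteq\Gamma(W)$. Since $\kappa, D_{\bar z}\kappa$ generate $W$, it suffices to locate the four derivatives $D_z\kappa, D_{\bar z}\kappa, D_zD_{\bar z}\kappa, D_{\bar z}D_{\bar z}\kappa$ inside $W$. The first two are already in $W$; the Willmore equation \eqref{eq-willmore} gives $D_{\bar z}D_{\bar z}\kappa = -\tfrac{\bar s}{2}\kappa\in W$. For the mixed term I would feed $\psi=\kappa$ into the Ricci equation (third line of \eqref{eq-integ}) and use $\langle\kappa,\kappa\rangle = 0$ to get $D_zD_{\bar z}\kappa = D_{\bar z}D_z\kappa + 2\langle\kappa,\bar\kappa\rangle\kappa$; writing $D_z\kappa = a\kappa + bD_{\bar z}\kappa$ and differentiating, every resulting term lands in $W$ (again invoking the Willmore relation for $D_{\bar z}D_{\bar z}\kappa$), so $D_zD_{\bar z}\kappa\in W$. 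The frame is then pure linear algebra: the Hermitian form $h(u,v):=\langle u,\bar v\rangle$ is positive definite on $V^\perp_\C$, and isotropy of $W$ gives $W\cap\bar W = 0$, hence $V^\perp_\C = W\oplus\bar W$. Choosing an $h$-orthonormal basis $\{E_1, E_2\}$ of $W$ yields $\langle E_i, E_j\rangle = 0$ and $\langle E_i, \bar E_j\rangle = \delta_{ij}$, while parallelism of $W$ gives $D_zE_i, D_{\bar z}E_i\in\hbox{Span}_{\C}\{E_1, E_2\}$; this is exactly the block-diagonal statement \eqref{eq-normal-bundle}.

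For total isotropy the clean observation is that the ordinary derivative $\partial_z$ preserves $\Gamma(W)$: the fourth equation in \eqref{eq-moving} reads $\psi_z = D_z\psi + 2\langle\psi, D_{\bar z}\kappa\rangle Y - 2\langle\psi,\kappa\rangle Y_{\bar z}$, and for $\psi\in\Gamma(W)$ both corrections vanish (as $\kappa, D_{\bar z}\kappa\in W$ and $W$ is isotropic), leaving $\psi_z = D_z\psi\in\Gamma(W)$ by parallelism. Starting from $Y_{zz} = -\tfrac s2 Y + \kappa$ and differentiating, an induction then gives $Y_z^{(m)}\in\hbox{Span}_{\C}\{Y, Y_z\} + W$ for all $m\geq 2$ (at each step the only new normal contribution is of the form $q_m\kappa + D_z\xi_m\in W$). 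Since $Y, Y_z$ are isotropic, mutually orthogonal, and orthogonal to $V^\perp$, and $W$ is isotropic, the subspace $\hbox{Span}_{\C}\{Y, Y_z\} + W$ is totally isotropic; therefore $\langle Y_z^{(m)}, Y_z^{(n)}\rangle = 0$ for all $m,n$, which is total isotropy. Fullness follows because $V^\perp_\C = W\oplus\bar W$ together with $V_\C = \hbox{Span}_{\C}\{Y, Y_z, Y_{\bar z}, N\}$ shows the full osculating space is all of $\R^8_1\otimes\C$; alternatively, were $y$ contained in a totally geodesic $S^4$, Ejiri's theorem would force it to be S-Willmore, contradicting the hypothesis.

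The step I expect to be the main obstacle is the parallelism of $W$, concentrated in the mixed-derivative identity $D_zD_{\bar z}\kappa\in W$: this is the place where the Willmore equation and the Ricci equation must be combined, and it is the only point that genuinely uses that $y$ is Willmore rather than merely isotropic. Once $W$ is known to be a parallel, $\partial_z$-stable, maximal isotropic subbundle, both the frame and the total-isotropy induction follow formally.
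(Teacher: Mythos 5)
Your argument is correct, but it follows a genuinely different route from the paper's. The discussion preceding the theorem in the paper establishes only the pointwise facts in your first paragraph (the isotropy relations, vanishing of $\Omega$ on $S^2$, and $D_z\kappa\in W:=\hbox{Span}_{\C}\{\kappa,D_{\bar z}\kappa\}$ by maximality); for the remaining claims --- parallelism of the normal connection and total isotropy --- the paper's formal proof (end of Subsection 2.2) switches to loop groups: the isotropy relations give $B_1B_1^t=0$ for the Maurer--Cartan coefficient, hence $\hat{B}_1\hat{B}_1^t=0$ for the normalized potential via Wu's formula, which together with the coarse classification of potentials of Willmore two-spheres (Theorem 2.8 of \cite{Wang-1} and Theorem 5.2 of \cite{DoWa1}) forces the potential to be of type 3; Theorem \ref{th-iso}, proved by the explicit Iwasawa decomposition of Appendix A, then delivers total isotropy and \eqref{eq-normal-bundle}. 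You replace all of this machinery by two local computations: parallelism of $W$, obtained by combining the Willmore equation $D_{\bar z}D_{\bar z}\kappa=-\tfrac{\bar s}{2}\kappa$ with the Ricci identity $D_zD_{\bar z}\kappa=D_{\bar z}D_z\kappa+2\langle\kappa,\bar\kappa\rangle\kappa$ (both steps check out against \eqref{eq-integ} and \eqref{eq-willmore}), and the observation that on sections of the isotropic parallel bundle $W$ the structure equation \eqref{eq-moving} collapses to $\psi_z=D_z\psi$, which makes the total-isotropy induction formal because $\hbox{Span}_{\C}\{Y,Y_z\}+W$ is totally isotropic. Your proof is more elementary and self-contained --- it needs neither the classification from \cite{Wang-1} nor any Iwasawa computation, and it upgrades the paper's ``Summing up'' discussion into a complete proof --- whereas the paper's heavier route buys the identification of the potential type itself, which is exactly the input it needs for the converse statements (Theorems \ref{thm-iso-willmore-2} and \ref{th-iso}) and for the construction algorithm, and which is insensitive to the points where $\kappa\wedge D_{\bar z}\kappa$ degenerates. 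The only step you assert rather than prove is the removable-singularity remark at those degenerate points; the paper handles the analogous issue by citing \cite{Ejiri1988} and \cite{MWW} for the holomorphic extension of such span bundles, and since total isotropy is a closed condition it propagates from the open dense set by continuity in any case, so this is a caveat to record, not a gap.
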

This theorem can also be derived by loop group theory (see the end of subsection 2.2).

\ \\
Note that \eqref{eq-normal-bundle} provides also sufficient conditions for $y$ to be a Willmore surface, i.e., we have the following
\begin{theorem} \label{thm-iso-willmore-2}
Let $y$ be a totally isotropic surface from $U$ into $S^6$, with complex coordinate $z$. If there exists an isotropic frame $\{E_1, E_2\}$ of the normal bundle $V^{\perp}_{\C}$ of $y$ such that
 \eqref{eq-normal-bundle} holds, then $y$ is a Willmore surface.
\end{theorem}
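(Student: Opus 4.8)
The plan is to reduce the Willmore equation \eqref{eq-willmore} to a linear-algebra fact about the isotropic plane $W := \mathrm{Span}_\C\{E_1,E_2\}$. First I would record the consequences of the metric normalization in \eqref{eq-normal-bundle}: the Gram matrix of $\{E_1,E_2,\bar E_1,\bar E_2\}$ with respect to $\langle\cdot,\cdot\rangle$ is nondegenerate, so these four sections form a frame of the rank-four bundle $V^\perp_\C$ and $V^\perp_\C = W \oplus \bar W$ with $W\cap\bar W=\{0\}$, where $\bar W := \mathrm{Span}_\C\{\bar E_1,\bar E_2\}$. In particular, if a section $\psi\in\Gamma(W)$ happens to be real, i.e. $\bar\psi=\psi$, then $\psi\in W$ while $\bar\psi\in\bar W$, and transversality forces $\psi=0$. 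This is the only structural input I need from the hypotheses besides the invariance of $W$ under the normal connection.

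Next I would use the block-diagonal hypothesis in \eqref{eq-normal-bundle}, namely $D_zE_i, D_{\bar z}E_i\in W$, which says exactly that $D_z$ and $D_{\bar z}$ map $\Gamma(W)$ into itself: writing any section of $W$ in the frame $\{E_1,E_2\}$ and differentiating, the coefficient functions simply differentiate while the frame vectors remain in $W$. Since $\kappa\in W$ and $D_{\bar z}\kappa\in W$ by \eqref{eq-normal-bundle}, iterating gives $D_{\bar z}D_{\bar z}\kappa\in W$; as $\tfrac{\bar s}{2}\kappa\in W$ as well, the whole Willmore expression $\Phi := D_{\bar z}D_{\bar z}\kappa+\tfrac{\bar s}{2}\kappa$ lies in $\Gamma(W)$.

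To finish, I would invoke the Codazzi integrability equation, the second line of \eqref{eq-integ}, which holds for every conformal immersion (it is a compatibility condition and requires no Willmore assumption) and states precisely that $\mathrm{Im}\,\Phi=0$, i.e. $\Phi$ is a real section of $V^\perp_\C$. Combining $\Phi\in\Gamma(W)$ with the reality of $\Phi$ and the transversality observed above yields $\Phi\equiv0$, which is exactly the Willmore equation \eqref{eq-willmore}; hence $y$ is Willmore.

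I expect no serious obstacle here: once the problem is phrased correctly, the argument is essentially the observation that the Willmore operator splits into its automatically-satisfied Codazzi (imaginary) part and a remainder that the block-diagonal hypothesis confines to the isotropic plane $W$, where reality annihilates it. The only points demanding care are the dimension count $\mathrm{rank}_\C V^\perp_\C=4$ for $S^6$ (so that $W$ and $\bar W$ are genuinely complementary) and the verification that the normal connection, being metric and real, sends $\bar E_i$ into $\bar W$ so that it is truly block diagonal; both are immediate from \eqref{eq-normal-bundle}. Total isotropy itself enters only to guarantee $\langle\kappa,\kappa\rangle=0$ and that $\kappa$ lies in an isotropic plane, making the hypotheses mutually consistent.
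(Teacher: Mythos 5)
Your proposal is correct and follows essentially the same route as the paper: the block-diagonal hypothesis in \eqref{eq-normal-bundle} confines $\Phi = D_{\bar z}D_{\bar z}\kappa+\tfrac{\bar s}{2}\kappa$ to the isotropic plane $\mathrm{Span}_\C\{E_1,E_2\}$, and the Codazzi equation in \eqref{eq-integ} makes $\Phi$ real, forcing $\Phi\equiv 0$. The only cosmetic difference is the last step — you use transversality $W\cap\bar W=\{0\}$, while the paper notes that a real isotropic section of the (positive definite) normal bundle vanishes — and these two observations are trivially equivalent.
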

\begin{proof}By \eqref{eq-normal-bundle}, we see that $D_{\bar{z}}D_{\bar{z}}\kappa+\frac{\bar{s}}{2}\kappa$ is an isotropic vector. Since $Im(D_{\bar{z}}D_{\bar{z}}\kappa+\frac{\bar{s}}{2}\kappa)=0$ by \eqref{eq-integ},  we have  $D_{\bar{z}}D_{\bar{z}}\kappa+\frac{\bar{s}}{2}\kappa=0$. So $y$ is Willmore.
\end{proof}

\subsection{Normalized potentials of totally isotropic Willmore two-spheres in $S^6$}

This subsection aims to derive the description of totally isotropic Willmore two-spheres  in $S^6$ in terms of the loop group methods.
To this end, we will first collect the basic theory concerning the DPW construction of harmonic maps and the applications to Willmore surfaces.
Then, we will derive the construction of normalized potentials of totally isotropic Willmore two-spheres via Wu's formula.
For more details of the loop group method we refer to \cite{DoWa1}, \cite{DoWa2} and \cite{Wu}.

\subsubsection{Harmonic maps into a symmetric space}
Let $G/K$ be a symmetric space defined by the involution $\sigma: G\rightarrow G$, with $G^{\sigma}\supset K\supset(G^{\sigma})_0$, and Lie algebras $\mathfrak{g}=Lie(G)$, $\mathfrak{k}=Lie(K)$. The Cartan decomposition induced by $\sigma$ on $\mathfrak{g}$ states that
\[ \mathfrak{g}=\mathfrak{k}\oplus\mathfrak{p}, \
[\mathfrak{k},\mathfrak{k}]\subset\mathfrak{k}, \
[\mathfrak{k},
\mathfrak{p}]\subset\mathfrak{p},
\ [\mathfrak{p},\mathfrak{p}]\subset\mathfrak{k}.
\]

 Let $f$ be a conformal harmonic map from a Riemann surface $M$ into
 $G/K$. Let $U$ be an open connected subset of $M$ with complex coordinate $z$. Then there exists a frame $F: U\rightarrow G$ of $f$ with a  Maurer-Cartan form $F^{-1} d F= \alpha$. The Maurer-Cartan equation reads
\[d\alpha+\frac{1}{2}[\alpha\wedge\alpha]=0.\]
Decomposing with respect to the Cartan decomposition, we obtain
$ \alpha=\alpha_0+\alpha_1 $ with $\alpha_0\in \Gamma(\mathfrak{k}\otimes T^*M), \
\alpha_1\in \Gamma(\mathfrak{p}\otimes T^*M)$.
And the Maurer-Cartan equation becomes
\[
\left\{\begin{array}{ll}
 & d \alpha_0+\frac{1}{2}[\alpha_0\wedge\alpha_0]+\frac{1}{2}[\alpha_1\wedge\alpha_1]=0.\\
& d
 \alpha_1+[\alpha_0\wedge\alpha_1]=0.
\end{array}\right.
\]
Decompose $\alpha_1$ further into the $(1,0)-$part $\alpha_{1}'$ and the $(0,1)-$part $\alpha_{1}''$. Introducing $\lambda\in S^1$, set
\begin{equation}\label{eq-alphaloop}
   \alpha_{\lambda}=\lambda^{-1}\alpha_{1}'+\alpha_0+\lambda\alpha_{1}'',  \  \lambda\in  S^1.
\end{equation}
It is well known (\cite{DPW}) that the map  $f:M\rightarrow G/K$ is harmonic  if and only if
\[
  d  \alpha_{\lambda}+\frac{1}{2}[\alpha_{\lambda}\wedge\alpha_{\lambda}]=0\ \ \hbox{for all}\ \lambda \in  S^1.
\]

\begin{definition}Let $F(z,\lambda)$ be a solution to the equation $ d  F(z,\lambda)= F(z, \lambda)\alpha_{\lambda},\ F(0,\lambda)=F(0)
.$ Then $F(z,\lambda)$ is called the {\em extended frame} of the harmonic map $f$. Note that $F(z,1)=F(z)$.
 \end{definition}

 \subsubsection{Two decomposition theorems}
 To state the DPW constructions for harmonic maps, we need the Iwasawa and Birkhoff decompositions for loop groups.
 For simplicity, from now on we consider the concrete case for Willmore surfaces \cite{DoWa1}. In this case $G=SO^+(1,n+3)$, $K=SO^+(1,3)\times SO(n)$, and
$\mathfrak{g}=\mathfrak{s}o(1,n+3)=\{X\in \mathfrak{g}l(n+4,\mathbb{R})|X^tI_{1,n+3}+I_{1,n+3}X=0\}.
$
The involution is given by
 \[
\begin{array}{ll}
\sigma: \ SO^+(1,n+3) & \rightarrow \ SO^+(1,n+3)\\
 \ \ \ \ \ ~~\ \ A&  \mapsto \ DAD^{-1},
\end{array}
\quad \quad \hbox{ with} \quad
D=\left(
         \begin{array}{ccccc}
             -I_{4} & 0 \\
            0 & I_{n} \\
         \end{array}
       \right).
\]
Note that $SO^+(1,n+3)^{\sigma}\supset SO^+(1,3)\times SO(n+2)= (SO^+(1,n+3)^{\sigma})_0$. We also have
$
\mathfrak{g}=\mathfrak{k}\oplus\mathfrak{p},
$
with
\[
\mathfrak{k}=\left\{\left(
                   \begin{array}{cc}
                     A_1 &0 \\
                     0 & A_2 \\
                   \end{array}
                 \right)
 |A_1^tI_{1,3}+I_{1,3}A_1 =0, A_2+A_2^t=0\right\}, \  \mathfrak{p}=\left\{\left(
                   \begin{array}{cc}
                   0 & B_1 \\
                     -B_1^tI_{1,3} & 0 \\
                   \end{array}
                 \right)
\right\}.
\]
Let $G^{\mathbb{C}}=SO^+(1,n+3,\mathbb{C}):=  \{X \in SL(n+4,\C) ~|~ X^t I_{1,n+3} X=I_{1,n+3}\}$ with $\mathfrak{so}(1,n+3,\mathbb{C})$ its Lie algebra. Extend $\sigma$ to an inner involution of $SO^+(1,n+3,\mathbb{C})$  with $K^{\mathbb{C}}=S(O^+(1,3,\mathbb{C})\times O(n,\C))$ its fixed point group.

Let $\Lambda G^{\mathbb{C}}_{\sigma}$ be the group of loops in $G^C =SO^+(1,n+3,\mathbb{C})$ twisted by $\sigma$. Then we have:
\begin{theorem}\label{thm-iwasawa} Theorem 4.5 of \cite{DoWa1}, also see \cite{DPW} (Iwasawa decomposition):
There exists a closed, connected solvable subgroup $S \subseteq K^\C$ such that
the multiplication $\Lambda G_{\sigma}^0 \times \Lambda^{+}_S G^{\mathbb{C}}_{\sigma}\rightarrow
\Lambda G^{\mathbb{C}}_{\sigma}$ is a real analytic diffeomorphism onto the open subset
$ \Lambda G_{\sigma}^0 \cdot \Lambda^{+}_S G^{\mathbb{C}}_{\sigma}      = \mathcal{I}^{\mathcal{U}}_e \subset(\Lambda G^{\mathbb{C}}_{\sigma})^0 .
$
Here $\Lambda_{S}^+ G^{\mathbb{C}}_{\sigma}:=\{\gamma\in\Lambda^+G^{S}_{\sigma}~|~\gamma|_{\lambda=0}\in S \}.$
\end{theorem}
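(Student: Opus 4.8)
The plan is to reduce the factorization to a statement at the Lie algebra level and then promote it to the group via the inverse function theorem. First I would fix appropriate Banach (or real-analytic) completions of the twisted loop groups so that $\Lambda G^{\C}_{\sigma}$ becomes a Banach Lie group with Lie algebra $\Lambda\mathfrak{g}^{\C}_{\sigma}$, and I would take $S\subseteq K^{\C}$ to be the solvable Iwasawa factor of $K^{\C}$ relative to the real form $K$, i.e. the closed connected solvable subgroup with $K^{\C}=K\cdot S$ and $\mathfrak{k}^{\C}=\mathfrak{k}\oplus\mathfrak{s}$, where $\mathfrak{s}=\mathrm{Lie}(S)$. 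The heart of the argument is the vector-space splitting
\[
\Lambda\mathfrak{g}^{\C}_{\sigma}=\Lambda\mathfrak{g}_{\sigma}\oplus\Lambda^{+}_{\mathfrak{s}}\mathfrak{g}^{\C}_{\sigma},
\]
where the first summand is the subalgebra of loops valued in the real form $\mathfrak{g}$ and the second consists of loops extending holomorphically into the disk whose value at $\lambda=0$ lies in $\mathfrak{s}$. I would prove this by Fourier analysis: each nonconstant frequency pair $\xi_{k}\lambda^{k}+\xi_{-k}\lambda^{-k}$ splits uniquely using the reality condition defining $\Lambda\mathfrak{g}_{\sigma}$, while the constant ($\lambda^{0}$) term, which the $\sigma$-twisting forces into $\mathfrak{k}^{\C}$, is split by the finite-dimensional Iwasawa decomposition $\mathfrak{k}^{\C}=\mathfrak{k}\oplus\mathfrak{s}$. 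Directness of the sum follows from $\mathfrak{k}\cap\mathfrak{s}=\{0\}$ together with a Laurent-coefficient comparison.

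With the infinitesimal splitting in hand, the differential at the identity of the multiplication map $m:\Lambda G_{\sigma}^{0}\times\Lambda^{+}_{S}G^{\C}_{\sigma}\to\Lambda G^{\C}_{\sigma}$ is exactly the isomorphism $(\xi,\eta)\mapsto\xi+\eta$ coming from the direct sum. By the inverse function theorem in the Banach-Lie-group category, $m$ is a real-analytic diffeomorphism from a neighborhood of $(e,e)$ onto a neighborhood of $e$, real analyticity being inherited from that of the group operations and of the finite-dimensional Iwasawa map. To pass from local to global I would establish uniqueness of factorization, namely $\Lambda G_{\sigma}^{0}\cap\Lambda^{+}_{S}G^{\C}_{\sigma}=\{e\}$: a real loop that also extends holomorphically to the disk extends, via the reality condition, holomorphically to the whole Riemann sphere and is therefore constant, and such a constant must lie in $K\cap S=\{e\}$ by uniqueness of the finite-dimensional Iwasawa decomposition. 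Translating the local diffeomorphism by left and right multiplication then shows that $m$ is a diffeomorphism onto its image and that this image is open.

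The remaining, and genuinely delicate, point is to identify the image precisely as the open subset $\mathcal{I}^{\mathcal{U}}_{e}\subset(\Lambda G^{\C}_{\sigma})^{0}$ rather than the whole group. This is where non-compactness enters: because $G$ (and $K$) are non-compact, even the finite-dimensional decomposition $K^{\C}=K\cdot S$ is onto an open, proper subset, and this obstruction propagates to the loop level, so full surjectivity cannot hold. I would characterize the image using the Grassmannian/Fredholm model of Pressley--Segal: represent loops as bounded operators on a polarized Hilbert space, interpret the factorization as a Gram--Schmidt type triangularization whose solvability is governed by the invertibility of certain Toeplitz blocks, and identify $\mathcal{I}^{\mathcal{U}}_{e}$ as the open set where those blocks are invertible. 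Showing that this set is open and connected and that it coincides with $\Lambda G_{\sigma}^{0}\cdot\Lambda^{+}_{S}G^{\C}_{\sigma}$ is the main obstacle, since the compact-group proof of Pressley--Segal must be adapted to the indefinite signature of $SO^{+}(1,n+3)$; this is exactly the content carried out in Kellersch's generalization underlying \cite{DoWa1} and \cite{DPW}. Finally, connectedness of $\Lambda G_{\sigma}^{0}$ and continuity of $m$ place the image inside the identity component $(\Lambda G^{\C}_{\sigma})^{0}$, completing the identification.
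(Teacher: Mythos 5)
You should first be aware that the paper contains no proof of this theorem: it is quoted verbatim as Theorem 4.5 of \cite{DoWa1} (see also \cite{DPW}, and ultimately Kellersch's thesis \cite{Ke1}), so there is no proof in the paper to compare yours against. Measured against the argument in that cited literature, your outline does reproduce its standard skeleton: a Banach completion, the Fourier-coefficient splitting $\Lambda\mathfrak{g}^{\C}_{\sigma}=\Lambda\mathfrak{g}_{\sigma}\oplus\Lambda^{+}_{\mathfrak{s}}\mathfrak{g}^{\C}_{\sigma}$ with the constant term handled by a finite-dimensional splitting of $\mathfrak{k}^{\C}$, the inverse function theorem, the Liouville argument for $\Lambda G^{0}_{\sigma}\cap\Lambda^{+}_{S}G^{\C}_{\sigma}=\{e\}$, and translation of the local diffeomorphism to arbitrary points (here you should say explicitly that $\mathrm{Ad}(g)$ preserves $\Lambda\mathfrak{g}_{\sigma}$ and that $\mathrm{Ad}(b)$ preserves $\Lambda^{+}_{\mathfrak{s}}\mathfrak{g}^{\C}_{\sigma}$ because $b(0)\in S$ and $S$ is a group; that is what makes the differential invertible everywhere, not just at $(e,e)$).

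Two genuine problems remain. First, your choice of $S$ rests on a false statement: for the non-compact group $K=SO^{+}(1,3)\times SO(n)$ relevant here there is no closed connected solvable $S\subseteq K^{\C}$ with $K^{\C}=K\cdot S$ globally; only the vector-space splitting $\mathfrak{k}^{\C}=\mathfrak{k}\oplus\mathfrak{s}$ together with openness of $K\cdot S$ in $K^{\C}$ can be arranged (a $\pi_{3}$ count for $SO^{+}(1,3)\subset SO(4,\C)$ already rules out a global product decomposition), and you contradict your own definition two paragraphs later when you correctly note that the finite-dimensional decomposition covers only a proper open subset. The existence of such an $\mathfrak{s}$, and the attendant decomposition of $K^{\C}$ (hence of the loop group) into finitely many cells, is itself a nontrivial piece of the non-compact theory, not the classical Iwasawa theorem. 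Second, the part you yourself call ``the main obstacle'' --- that the product set is precisely the open, connected cell $\mathcal{I}^{\mathcal{U}}_{e}\subset(\Lambda G^{\C}_{\sigma})^{0}$, i.e.\ the whole cell structure in the indefinite-signature setting --- is deferred to ``Kellersch's generalization underlying \cite{DoWa1} and \cite{DPW}'', which is exactly the theorem being proved. So as an independent proof your proposal is circular at its core; as a reconstruction of where the difficulty lies and how the literature resolves it, it is accurate, and it is no less complete than the paper itself, which offers only the citation.
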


Let $\Lambda^-_*G^{\mathbb{C}}_{\sigma}$ be the group of loops that extend holomorphically into $\infty$ and take values $I$ at infinity. Set also
$\Lambda_{\mathcal C}^+ G^{\mathbb{C}}_{\sigma}:=\{\gamma\in\Lambda^+G^{\mathbb{C}}_{\sigma}~|~\gamma|_{\lambda=0}\in (K^\C)^0 \}.
$
\begin{theorem} \label{thm-birkhoff}  \cite{DPW}, \cite{DoWa1}   (Birkhoff decomposition):
The multiplication $\Lambda_{*}^{-} {G}^{\mathbb{C}}_{\sigma}\times
\Lambda^{+}_{\mathcal{C}} {G}^{\mathbb{C}}_{\sigma}\rightarrow
\Lambda {G}^{\mathbb{C}}_{\sigma}$ is an analytic  diffeomorphism onto the
open, dense subset $\Lambda_{*}^{-} {G}^{\mathbb{C}}_{\sigma}\cdot
\Lambda^{+}_{\mathcal{C}} {G}^{\mathbb{C}}_{\sigma}$ {\em (big Birkhoff cell)}.
\end{theorem}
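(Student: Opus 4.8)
The plan is to derive this twisted Birkhoff factorization from the classical Birkhoff decomposition for the loop group of the complex reductive group $G^{\C}=SO^+(1,n+3,\C)$, following the method of Pressley--Segal, and then to verify that everything is compatible with the $\sigma$-twisting. The first point to note is that, unlike the Iwasawa decomposition of Theorem \ref{thm-iwasawa}, here every construction takes place in the complexification, so the non-compactness of the real form $SO^+(1,n+3)$ plays no role: $G^{\C}$ is a complex reductive group, and the standard complex loop-group machinery applies directly.

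First I would pass to the Lie algebra level. A twisted loop $\xi(\lambda)=\sum_k\xi_k\lambda^k\in\Lambda\mathfrak{g}^{\C}_{\sigma}$ has $\xi_k\in\mathfrak{k}^{\C}$ for $k$ even and $\xi_k\in\mathfrak{p}^{\C}$ for $k$ odd. Splitting the Laurent series into its strictly negative and its non-negative Fourier modes yields a direct sum of vector spaces
\[
\Lambda\mathfrak{g}^{\C}_{\sigma}=\Lambda^-_*\mathfrak{g}^{\C}_{\sigma}\oplus\Lambda^+_{\mathcal C}\mathfrak{g}^{\C}_{\sigma},
\]
the constant term automatically lying in $\mathfrak{k}^{\C}$ because of the twisting. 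Hence the differential at $(I,I)$ of the multiplication map $(\gamma_-,\gamma_+)\mapsto\gamma_-\gamma_+$ is the identity, and the inverse function theorem in the appropriate Banach loop-group category makes the multiplication a real-analytic diffeomorphism from a neighborhood of $(I,I)$ onto a neighborhood of $I$.

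To upgrade this to the full big cell I would use the Grassmannian model: sending $\gamma$ to the closed subspace $\gamma\cdot H_+$ of $H=L^2(S^1,\C^{n+4})$ embeds $\Lambda G^{\C}_{\sigma}$ into the restricted Grassmannian, and the big Birkhoff cell is exactly the preimage of the top cell, namely those $\gamma$ for which $\gamma\cdot H_+$ is a graph over $H_+$. The top cell is open and dense, being the complement of a countable union of Schubert strata of positive codimension indexed by the cocharacters $S^1\to T$ of a maximal torus, and these properties transfer to $\Lambda^-_*G^{\C}_{\sigma}\cdot\Lambda^+_{\mathcal C}G^{\C}_{\sigma}$. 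Uniqueness of the factorization is immediate from $\Lambda^-_*G^{\C}_{\sigma}\cap\Lambda^+_{\mathcal C}G^{\C}_{\sigma}=\{I\}$: a loop holomorphic on both $\{|\lambda|\le 1\}$ and $\{|\lambda|\ge 1\}$ with value $I$ at $\infty$ extends to a bounded entire $G^{\C}$-valued function, hence is constant $=I$ by Liouville. Transporting the local diffeomorphism property around the cell by left and right translations then gives that multiplication is a global real-analytic diffeomorphism onto the open dense big cell.

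Finally I would check that the factorization respects the twisting. Since $\sigma$ is inner and the subspaces $H_{\pm}$ are invariant under the combined $\mathbb{Z}_2$-action $\gamma(\lambda)\mapsto\sigma(\gamma(-\lambda))$, this action commutes with the Grassmannian embedding and with the Birkhoff factors, so the untwisted decomposition restricts to the $\sigma$-twisted subgroups, which are precisely its fixed-point sets. The step I expect to be the main obstacle is the openness and density argument: making the Grassmannian embedding and the Schubert stratification rigorous in the chosen real-analytic loop topology, and checking that the Fredholm/graph condition cutting out the top cell is exactly the condition $\gamma|_{\lambda=0}\in(K^{\C})^0$ built into the definition of $\Lambda^+_{\mathcal C}G^{\C}_{\sigma}$. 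The remaining steps—the algebra splitting, the uniqueness, and the twisting-compatibility—are routine by comparison.
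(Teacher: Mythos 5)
This theorem is not proved in the paper at all: it is imported from \cite{DPW} and \cite{DoWa1}, and the proofs in those sources rest on exactly the Pressley--Segal machinery you outline, so your route is the intended one rather than an alternative. Within that outline, several steps are indeed correct and routine: the Lie-algebra splitting plus the inverse function theorem, the Liouville argument giving $\Lambda^{-}_{*}G^{\mathbb{C}}_{\sigma}\cap\Lambda^{+}_{\mathcal{C}}G^{\mathbb{C}}_{\sigma}=\{I\}$ and hence uniqueness, and the equivariance-plus-uniqueness trick showing that the Birkhoff factors of a $\sigma$-twisted loop are themselves $\sigma$-twisted.

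However, the two steps you defer are precisely where the content lies, and as written they are genuine gaps. First, density does not ``transfer'' to fixed-point sets. If the untwisted big cell is open and dense in $\Lambda G^{\mathbb{C}}$ and $\Lambda G^{\mathbb{C}}_{\sigma}$ is the fixed-point set of $\gamma(\lambda)\mapsto\sigma(\gamma(-\lambda))$, then the intersection is open in $\Lambda G^{\mathbb{C}}_{\sigma}$, but nothing formal makes it dense there: a dense open subset of a manifold can meet a given submanifold in a set that is far from dense in that submanifold. One must either carry out the Birkhoff stratification for the twisted loop group itself (checking that every lower stratum meets $\Lambda G^{\mathbb{C}}_{\sigma}$ in positive codimension), or invoke that the Grassmannian and stratification theory holds verbatim for twisted loop groups, or untwist: since $\sigma=\mathrm{Ad}(D)$ with $D=e^{i\pi H}$, conjugation by $\lambda^{H}$ converts $\sigma$-twisted loops in $\lambda$ into untwisted loops in $\mu=\lambda^{2}$; but this conjugation shifts Fourier modes, so it does not carry $\Lambda^{\pm}$ to the standard $\Lambda^{\pm}$, and that bookkeeping is not trivial.

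Second, components. Here $G^{\mathbb{C}}\cong SO(n+4,\C)$ has $\pi_{1}=\mathbb{Z}_2$ and $K^{\mathbb{C}}=S(O^{+}(1,3,\C)\times O(n,\C))$ is disconnected, so $\Lambda G^{\mathbb{C}}_{\sigma}$ is disconnected, whereas the big cell is connected: $\Lambda^{-}_{*}G^{\mathbb{C}}_{\sigma}$ is contractible and $\Lambda^{+}_{\mathcal{C}}G^{\mathbb{C}}_{\sigma}$ retracts onto the connected group $(K^{\mathbb{C}})^{0}$. Hence your argument would ``prove'' density in all of $\Lambda G^{\mathbb{C}}_{\sigma}$, which is impossible; density can only hold in the identity component, and the theorem must be read accordingly. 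Relatedly, your identification of the Grassmannian graph condition with the condition $\gamma|_{\lambda=0}\in(K^{\mathbb{C}})^{0}$ is incorrect: the graph (Fredholm) condition characterizes factorizability with $\gamma_{+}(0)$ in \emph{any} component of $K^{\mathbb{C}}$, and restricting to $(K^{\mathbb{C}})^{0}$ selects one of several open translates of that cell. This component bookkeeping for the non-compact, non-simply-connected group $SO^{+}(1,n+3)$ is exactly the refinement \cite{DoWa1} adds to \cite{DPW}, and it is why the paper cites both.
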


\subsubsection{The DPW construction}
 Let $\mathbb{D}\subset\mathbb{C}$ be a disk or $\mathbb{C}$  with complex coordinate $z$.
\begin{theorem} \label{thm-DPW} \cite{DPW}

 (1) Let $f:\mathbb{D}\rightarrow G/K$ denote a harmonic map with an extended frame $F(z,\bar{z},\lambda)\in \Lambda G_{\sigma}$ and $F(0,0,\lambda)=I$. Then there exists a Birkhoff decomposition of $F(z,\bar{z},\lambda)$
\[
F_-(z,\lambda)=F(z,\bar{z},\lambda)  F_+(z,\bar{z},\lambda),~ \hbox{ with }~ F_+\in\Lambda^+_{S}G^{\mathbb{C}}_{\sigma},
\]
such that $F_-(z,\lambda):\mathbb{D} \rightarrow\Lambda^-_*G^{\mathbb{C}}_{\sigma}$ is meromorphic, with its the Maurer-Cartan form of $F_-$ being the form\[
\eta=F_-^{-1} d  F_-=\lambda^{-1} \eta_{-1}(z) d z,
\]
 with  $\eta_{-1} :\mathbb{D} \rightarrow \mathfrak{p}\otimes\C$ independent of $\lambda$.
The meromorphic $1$-form $\eta$ is called the normalized potential of $f$.

(2). Let $\eta$ be a $\lambda^{-1}\cdot\mathfrak{p}\otimes\C-$valued meromorphic 1-form on $\mathbb{D}$. Let $F_-(z,\lambda)$ be a solution to $F_-^{-1} d  F_-=\eta$, $F_-(0,\lambda)=I$. Then there exists an Iwasawa decomposition
\[
F_-(0,\lambda)=\tilde{F}(z, \bar{z},\lambda)  \tilde{F}^+(z, \bar{z},\lambda),\ \hbox{ with }\ \tilde{F}\in\Lambda G_{\sigma},\ \tilde{F}\in\Lambda ^+_{S} G^{\mathbb{C}}_{\sigma}
\]
on an open subset $\mathbb{D}_{\mathfrak{I}}$ of $\mathbb{D}$. Moreover,  $\tilde{F}(z,\bar{z},\lambda)$ is an extended frame of some harmonic map from $\mathbb{D}_{\mathfrak{I}}$  to $G/K$ with $\tilde{F}(0,\lambda)=I$. All harmonic maps can be obtained in this way, since the above two procedures are inverse to each other if the normalization at some based point is fixed.
\end{theorem}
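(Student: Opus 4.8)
The plan is to treat the two parts by the standard DPW dressing argument, using the decomposition Theorems \ref{thm-iwasawa} and \ref{thm-birkhoff} as black boxes; the content is then entirely a bookkeeping of powers of $\lambda$, combined with the twisting and reality conditions that define the loop groups.

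For part (1), I would start from the extended frame, whose Maurer--Cartan form is $F^{-1}dF=\alpha_\lambda=\lambda^{-1}\alpha_1'+\alpha_0+\lambda\alpha_1''$ as in \eqref{eq-alphaloop}, and apply the Birkhoff decomposition of Theorem \ref{thm-birkhoff} pointwise in $z$ to write $F_-=FF_+$ with $F_+\in\Lambda^+_SG^{\mathbb{C}}_\sigma$ and $F_-\in\Lambda^-_*G^{\mathbb{C}}_\sigma$, valid on the open dense set where $F(z,\cdot)$ lies in the big cell. A direct computation gives
\[
\eta=F_-^{-1}dF_-=F_+^{-1}\alpha_\lambda F_+ + F_+^{-1}dF_+.
\]
Now I count powers of $\lambda$ in two ways. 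Since $F_+$ and $F_+^{-1}$ contain only nonnegative powers of $\lambda$ while $\alpha_\lambda$ has lowest power $\lambda^{-1}$, the right-hand side has lowest power $\lambda^{-1}$. On the other hand $F_-\in\Lambda^-_*G^{\mathbb{C}}_\sigma$ has expansion $I+O(\lambda^{-1})$ near $\lambda=\infty$, so $dF_-$, and hence $\eta$, contain only \emph{strictly negative} powers of $\lambda$. The two statements force $\eta=\lambda^{-1}\eta_{-1}\,dz$. The twisting condition (odd powers of $\lambda$ pairing with $\mathfrak{p}$, even powers with $\mathfrak{k}$) gives $\eta_{-1}\in\mathfrak{p}\otimes\mathbb{C}$; and since the only $\lambda^{-1}$ contribution comes from $\alpha_1'$, which is of type $(1,0)$, the form $\eta$ has no $d\bar z$ component, whence $\partial_{\bar z}F_-=0$ and $\eta_{-1}$ is holomorphic in $z$ (meromorphic globally, with poles exactly where the Birkhoff splitting degenerates).

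For part (2), I would reverse the steps. Solving $F_-^{-1}dF_-=\eta=\lambda^{-1}\eta_{-1}\,dz$ with $F_-(0,\lambda)=I$ produces a meromorphic $F_-$; because $\eta$ carries only the power $\lambda^{-1}$ and vanishes at $\lambda=\infty$, the solution lies in $\Lambda^-_*G^{\mathbb{C}}_\sigma$, and the twisting is preserved since $\eta_{-1}\in\mathfrak{p}$. Applying the Iwasawa decomposition of Theorem \ref{thm-iwasawa} on the open subset $\mathbb{D}_{\mathfrak{I}}$ where $F_-$ stays in $\mathcal{I}^{\mathcal{U}}_e$ gives $F_-=\tilde F\tilde F_+$ with $\tilde F\in\Lambda G_\sigma$ and $\tilde F_+\in\Lambda^+_SG^{\mathbb{C}}_\sigma$. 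Writing $\hat\alpha=\tilde F^{-1}d\tilde F=\tilde F_+\eta\tilde F_+^{-1}-(d\tilde F_+)\tilde F_+^{-1}$ and counting powers as before shows $\hat\alpha$ has lowest power $\lambda^{-1}$; the reality condition satisfied by the real loop $\tilde F\in\Lambda G_\sigma$, namely $\hat\alpha_{-j}=\overline{\hat\alpha_j}$, then forces the highest power to be $\lambda^{+1}$ with $\hat\alpha_1''=\overline{\hat\alpha_1'}$. Together with the twisting this yields exactly $\hat\alpha=\lambda^{-1}\hat\alpha_1'+\hat\alpha_0+\lambda\hat\alpha_1''$, the Maurer--Cartan form of an extended frame; by the DPW criterion recalled above (harmonicity of $f$ being equivalent to $d\alpha_\lambda+\frac12[\alpha_\lambda\wedge\alpha_\lambda]=0$ for all $\lambda\in S^1$, \cite{DPW}), $\tilde F(\cdot,1)$ projects to a harmonic map. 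That the two constructions are mutually inverse, once the base point $F(0)=I$ is fixed, follows from the uniqueness in the two decomposition theorems.

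The routine part is the power counting; the only genuine subtleties are analytic rather than algebraic, and they are already absorbed into the cited decomposition theorems: that the Birkhoff splitting holds on an open dense set (so that $F_-$ is meromorphic), and that the Iwasawa splitting holds on the open set $\mathbb{D}_{\mathfrak{I}}$. The main point to get right by hand is the interplay, in part (2), of the reality condition with the holomorphic type $(1,0)$ of $\eta_{-1}$: this is what pins down the precise three-term form $\lambda^{-1}\hat\alpha_1'+\hat\alpha_0+\lambda\hat\alpha_1''$, rather than merely a Laurent polynomial in $\lambda$ of degree between $-1$ and $1$.
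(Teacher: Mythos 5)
The paper does not prove this statement at all: it is quoted from \cite{DPW} and used as a black box (the decomposition Theorems \ref{thm-iwasawa} and \ref{thm-birkhoff} are likewise imported), so there is no internal proof to compare yours against. Judged on its own merits, your argument is the standard DPW argument, and its algebraic half is correct and well executed: the power counting that forces $\eta=\lambda^{-1}\eta_{-1}\,dz$ in part (1), and the interplay of power counting, twisting and the reality condition that pins down $\hat\alpha=\lambda^{-1}\hat\alpha_1'+\hat\alpha_0+\lambda\hat\alpha_1''$ in part (2), including your correct observation that reality is what cuts the Laurent expansion off at $\lambda^{+1}$.

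There is, however, one genuine gap, located where you write that the analytic subtleties are ``already absorbed into the cited decomposition theorems: that the Birkhoff splitting holds on an open dense set (so that $F_-$ is meromorphic).'' Theorem \ref{thm-birkhoff} says the big cell is open and dense \emph{in the loop group} $\Lambda G^{\mathbb{C}}_\sigma$; it does not say that the set of $z\in\mathbb{D}$ for which $F(z,\cdot)$ lies in the big cell is dense in $\mathbb{D}$, nor that $F_-$, which a priori is only defined and holomorphic on that set, extends \emph{meromorphically} across its complement. (Openness of the good set follows from continuity, and it is nonempty since $F(0,\cdot)=I$; density and meromorphic continuation do not follow.) This is precisely the hard analytic core of DPW's proof: one must show that the complement of the big cell is cut out by the vanishing of a holomorphic determinant/tau-type function, so that the Birkhoff factor blows up in a controlled, meromorphic way -- e.g.\ via Cramer's rule applied to the Toeplitz-operator formulation of the splitting. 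Without that input your part (1) only produces a holomorphic potential on a neighborhood of $0$, not a meromorphic potential on $\mathbb{D}$. A smaller, fixable point in part (2): you should also justify that the formal series solution $F_-=I+\sum_{j\ge1}\lambda^{-j}F_{-,j}$ of $F_-^{-1}dF_-=\eta$ converges in the loop-group topology (it does, by the usual iterated-integral estimates), rather than inferring membership in $\Lambda^-_*G^{\mathbb{C}}_\sigma$ purely from the fact that $\eta$ carries only the power $\lambda^{-1}$.
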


The normalized potential can be determined from the Maurer-Cartan form of $f$. Let $f$, $F(z,\lambda)$ and $\alpha_{\lambda}=F(z,\lambda)^{-1} d  F(z,\lambda)$ as above. Let $\delta_1$ and $\delta_0$ denote the sum of the holomorphic terms of $z$ about $z=0$ in the Taylor expansion of $\alpha_1^\prime (\frac{\partial}{\partial z})$ and  $\alpha_0^\prime (\frac{\partial}{\partial z})$ respectively.
 \begin{theorem} \label{thm-wu} \cite{Wu} (Wu's formula) We retain the notions in Theorem \ref{thm-DPW}. The the normalized potential of $f$ with respect to the base point $0$ is given by
 \[
 \eta=\lambda^{-1}F_0(z)\delta_1F_0(z)^{-1}  d  z,
\]
where $F_0(z):\mathbb{D}\rightarrow G^{\mathbb{C}}$ is the solution to $F_0(z)^{-1} d  F_0(z)=\delta_0  d  z$, $F_0(0)=I$.
\end{theorem}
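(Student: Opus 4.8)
The statement is the classical formula of Wu, and the goal of this sketch is to reconstruct a clean proof from the DPW machinery recorded above. The plan is to combine the Birkhoff decomposition of Theorem \ref{thm-birkhoff} with a complexification argument that reduces the computation to a purely holomorphic frame, followed by a single explicit gauge transformation. First I would recall that, writing the $dz$-coefficient of $\alpha_\lambda$ as $A = \lambda^{-1}\alpha_1'(\partial_z) + \alpha_0'(\partial_z)$, the normalized potential is $\eta = F_-^{-1}dF_-$, where $F_-\in\Lambda^-_*G^{\mathbb{C}}_\sigma$ is the factor produced by Theorem \ref{thm-DPW}(1), i.e. $F_- = FF_+$ with $F_+\in\Lambda^+_S G^{\mathbb{C}}_\sigma$. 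The decisive structural fact is that $F_-$ is \emph{holomorphic} in $z$ (it is meromorphic and $\partial_{\bar z}F_- = 0$), so the entire $\bar z$-dependence of $F$ is carried by the positive factor $F_+$.

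The key step is a complexification. Since $F$ is real-analytic in $(z,\bar z)$, I would extend it to a frame $\hat F(z,w,\lambda)$ holomorphic in two independent variables $(z,w)$ near $(0,0)$, where $w$ is the complexification of $\bar z$; the identity $F_- = FF_+$ continues to $F_-(z,\lambda) = \hat F(z,w,\lambda)\hat F_+(z,w,\lambda)$ with $F_-$ still independent of $w$. Restricting to the slice $w = 0$ produces a frame $G(z,\lambda):=\hat F(z,0,\lambda)$ that is holomorphic in $z$, satisfies $G(0,\lambda)=I$, and has Maurer--Cartan form $G^{-1}dG = A|_{w=0}\,dz$. Evaluation at $w=0$ selects exactly the pure powers of $z$ in the Taylor expansions of $\alpha_1'(\partial_z)$ and $\alpha_0'(\partial_z)$, i.e. their holomorphic parts, so that $G^{-1}dG = \bigl(\lambda^{-1}\delta_1(z) + \delta_0(z)\bigr)dz$. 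Moreover $G = F_-\,\hat F_+(\cdot,0,\lambda)$ is a Birkhoff splitting of $G$, with the same negative factor $F_-$ and a positive factor that survives the restriction, so by uniqueness the normalized potential of $f$ coincides with that of the holomorphic frame $G$.

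It then remains to compute the normalized potential of $G$ by one gauge transformation. Let $F_0(z)$ solve $F_0^{-1}dF_0 = \delta_0\,dz$ with $F_0(0)=I$; since $\alpha_0$ is $\mathfrak{k}$-valued one has $\delta_0\in\mathfrak{k}\otimes\C$ and hence $F_0\in (K^{\C})^0$, a $\lambda$-independent loop, so $F_0\in\Lambda^+_{\mathcal{C}}G^{\mathbb{C}}_\sigma$. Setting $P := G F_0^{-1}$, a direct computation gives
\[
P^{-1}dP = \mathrm{Ad}(F_0)\bigl(G^{-1}dG\bigr) - dF_0\,F_0^{-1} = \lambda^{-1}F_0\delta_1 F_0^{-1}\,dz ,
\]
the two $F_0\delta_0 F_0^{-1}$-contributions cancelling. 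Because this form is $\lambda^{-1}\cdot\mathfrak{p}\otimes\C$-valued and $P(0)=I$, its primitive $P$ lies in $\Lambda^-_*G^{\mathbb{C}}_\sigma$; thus $G = P F_0$ is a Birkhoff splitting, and uniqueness in the big cell (Theorem \ref{thm-birkhoff}) forces $F_- = P$. Therefore $\eta = F_-^{-1}dF_- = \lambda^{-1}F_0\delta_1 F_0^{-1}\,dz$, which is the asserted formula.

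I expect the main obstacle to be the complexification step rather than the algebra. One must justify rigorously that the real-analytic extended frame extends holomorphically in the two independent variables $(z,w)$ on a common neighbourhood, that the loop-group factorization properties survive the extension and the restriction to $w=0$ (in particular the holomorphic dependence on $\lambda$ near $\lambda=0$ for the positive factor and the normalization $F_-|_{\lambda=\infty}=I$ for the negative one), and that the Birkhoff decomposition remains valid and unique over the complexified base so that the two factorizations of $G$ may legitimately be compared. Once these analytic points are secured, the gauge computation in the third paragraph is entirely routine.
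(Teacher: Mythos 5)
The paper does not prove this statement at all: it is quoted verbatim from the cited reference \cite{Wu} and then simply applied, so there is no in-paper argument to compare against. Your reconstruction is correct, and it is in substance the original argument of Wu: complexify the real-analytic extended frame in the two variables $(z,w)$, restrict to the slice $w=0$ to obtain a holomorphic frame $G$ with $G^{-1}dG=(\lambda^{-1}\delta_1+\delta_0)\,dz$, peel off the $\mathfrak{k}$-part by the gauge $F_0$, and invoke uniqueness of the Birkhoff splitting. Two small points to tidy up. First, from $F_-=\hat F\hat F_+$ the restriction gives $G=F_-\bigl(\hat F_+|_{w=0}\bigr)^{-1}$, not $G=F_-\,\hat F_+|_{w=0}$; this is harmless since the inverse of a positive loop is positive, but as written the identity is off by an inverse. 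Second, for the final uniqueness step it is worth making explicit why $P=GF_0^{-1}$ lies in $\Lambda^-_*G^{\mathbb{C}}_\sigma$: its Maurer--Cartan form $\lambda^{-1}F_0\delta_1F_0^{-1}dz$ is polynomial in $\lambda^{-1}$ and $\mathfrak{p}\otimes\C$-valued (so $\sigma$-twisted), hence the ODE solution with $P(0)=I$ extends holomorphically in $\lambda^{-1}$ to all of $\C$ with value $I$ at $\lambda=\infty$; then $P^{-1}F_-\in\Lambda^-_*\cap\Lambda^+_{\mathcal C}=\{I\}$ by a Liouville argument, since both factors sets are groups. The analytic points you flag at the end (existence of the two-variable holomorphic extension of $F$ and of the positive factor, and persistence of the factorization under restriction) are genuine and are exactly what the cited paper \cite{Wu} supplies; acknowledging them as the remaining work is the right assessment of where the real content lies.
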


\subsubsection{Normalized potentials of totally isotropic Willmore two spheres in $S^6$ }
Let $F$ be a frame of  a Willmore surface $y$ with $\alpha=F^{-1}dF=\alpha_1'+\alpha_0+\alpha_{1}''$ as above. Here
\[ \alpha_0'= \left(
                      \begin{array}{cc}
                        A_1 & 0\\
                       0 & A_2 \\
                      \end{array}
                    \right)dz \ \hbox{ and } \ \alpha_1'= \left(
                      \begin{array}{cc}
                        0 & B_1 \\
                        -B_1^tI_{1,3} & 0 \\
                      \end{array}
                    \right)dz.\]
Let $\delta_1'$ be the holomorphic part of $\alpha_1'$
                    and $\delta_0'$ be the holomorphic part of
 $\alpha_0'$. Let  $\tilde{B}_1$ be the holomorphic part of $B_1$. Let
  \[F_0= \left(
                      \begin{array}{cc}
                        K_1 & 0\\
                       0 & K_2 \\
                      \end{array}
                    \right).\]
                    be the solution to $~F_0^{-1}dF_0=\delta_0',\ F_0(z_0)=I_{8}$.
By Theorem \ref{thm-wu}, we have
  \begin{equation}\label{eq-wu}\eta=F_0\delta_1'F_0^{-1}=\lambda^{-1}\left(
                      \begin{array}{cc}
                        0 & \hat{B}_1 \\
                        -\hat{B}_1^tI_{1,3} & 0 \\
                      \end{array}
                    \right)dz,\ \
  \hbox{ with }\
\hat{B}_1 =K_1 \tilde{B}_1K_2^{-1}.\end{equation}
Applying Wu's formula, we obtain
\begin{theorem}\label{thm-iso-willmore-2}
Let $y$ be a totally isotropic Willmore two sphere in $S^6$. Then the normal bundle of $y$ satisfies the properties  \eqref{eq-normal-bundle} of Theorem \ref{thm-iso-willmore-1}. The normalized potential of $y$ is of the form
\begin{equation}\label{eq-iso-np}
\eta=\lambda^{-1}\eta_{-1}dz=\lambda^{-1}\left(
                      \begin{array}{cc}
                        0 & \hat{B}_1 \\
                        -\hat{B}_1^tI_{1,3} & 0 \\
                      \end{array}
                    \right)dz,\  ~~\hbox{ with }~~  \hat{B}_1=
                                        \left(
                                          \begin{array}{ccccc}
                                            h_{11} & ih_{11}&  h_{12} & ih_{12} \\
                                            h_{21} & ih_{21}&  h_{22} & ih_{22} \\
                                            h_{31} & ih_{31}&  h_{32} & ih_{32} \\
                                            h_{41} & ih_{41}&  h_{42} & ih_{42} \\
                                          \end{array}
                                        \right).\end{equation}
                    Here $h_{ij}$ are meromorphic functions.
\end{theorem}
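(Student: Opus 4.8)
The plan is to prove the two assertions in turn: the normal-bundle structure is essentially a restatement of Theorem \ref{thm-iso-willmore-1}, while the explicit form of the normalized potential is the substantive point, obtained by transporting an isotropy-induced symmetry of $B_1$ through Wu's formula. For the first assertion, note that total isotropy forces $\kappa,\ D_z\kappa,\ D_{\bar z}\kappa$ and all higher normal derivatives to be mutually isotropic, hence to span an isotropic subbundle $\mathcal E$ of $V^{\perp}_{\C}\cong\C^4$. Since a nondegenerate symmetric form on $\C^4$ admits isotropic subspaces of complex dimension at most two, $\mathcal E$ has rank at most two (rank exactly two when $y$ is full, as in Theorem \ref{thm-iso-willmore-1}); the argument of Theorem \ref{thm-iso-willmore-1} then shows $\mathcal E=\mathrm{Span}_{\C}\{E_1,E_2\}$ is parallel, which is \eqref{eq-normal-bundle}.

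Next I would fix a real orthonormal normal frame $\{\psi_1,\psi_2,\psi_3,\psi_4\}$ adapted to $\mathcal E$ by setting $E_1=\tfrac1{\sqrt2}(\psi_1+i\psi_2)$ and $E_2=\tfrac1{\sqrt2}(\psi_3+i\psi_4)$, which satisfy the normalization in \eqref{eq-normal-bundle}. Writing $\kappa=\sum_j k_j\psi_j$ and $D_{\bar z}\kappa=\sum_j\beta_j\psi_j$, the containments $\kappa,D_{\bar z}\kappa\in\mathcal E$ translate into the four relations $k_2=ik_1,\ k_4=ik_3,\ \beta_2=i\beta_1,\ \beta_4=i\beta_3$. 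Introduce the normal complex structure $J_n$ with $J_n\psi_1=\psi_2,\ J_n\psi_2=-\psi_1,\ J_n\psi_3=\psi_4,\ J_n\psi_4=-\psi_3$, whose $+i$-eigenspace is exactly $\mathcal E$. A direct inspection of the columns of $B_1$ in \eqref{eq-b1}, using these four relations, shows that the second column equals $i$ times the first and the fourth equals $i$ times the third; equivalently $B_1J_n=iB_1$. Because passing to the holomorphic part is $\C$-linear and $J_n$ is constant, the same identity survives for $\tilde B_1$, the holomorphic part of $B_1$: $\tilde B_1 J_n=i\tilde B_1$.

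The last step transports this identity through Wu's formula \eqref{eq-wu}, where $\hat B_1=K_1\tilde B_1K_2^{-1}$ and $K_2$ solves $K_2^{-1}dK_2=(\text{holomorphic part of }A_2)\,dz$, with $A_2$ the normal connection coefficient. The block-diagonality of the normal connection under $\{E_1,E_2,\bar E_1,\bar E_2\}$ in \eqref{eq-normal-bundle} says precisely that $D$ preserves $\mathcal E$ and $\bar{\mathcal E}$, i.e. that $A_2$, and hence its holomorphic part, commutes with $J_n$; consequently $K_2$, and with it $K_2^{-1}$, commutes with $J_n$. Therefore
\[
\hat B_1 J_n=K_1\tilde B_1K_2^{-1}J_n=K_1\tilde B_1 J_n K_2^{-1}=i\,K_1\tilde B_1K_2^{-1}=i\hat B_1,
\]
which is exactly the column-pairing structure displayed in \eqref{eq-iso-np}; the entries $h_{ij}$ are meromorphic since $\hat B_1$ is, by Theorem \ref{thm-wu}.

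The main obstacle I anticipate is the conceptual point buried in the final paragraph rather than any long computation: it is the \emph{parallelness} in \eqref{eq-normal-bundle} that guarantees the gauge $K_2$ from Wu's formula respects the eigenspace splitting of $J_n$. Without block-diagonality of the normal connection, $K_2$ would in general mix $\mathcal E$ with $\bar{\mathcal E}$ and the right multiplication by $K_2^{-1}$ would destroy the relation $\hat B_1J_n=i\hat B_1$. The remaining care is bookkeeping—checking that the isotropy relations on $(k_j,\beta_j)$ match the precise frame convention of \eqref{eq-b1} so that the column identity $B_1J_n=iB_1$ holds with the correct sign.
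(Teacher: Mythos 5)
There is a genuine gap in your first half. Theorem \ref{thm-iso-willmore-1} is stated, and its proof works, only under the hypothesis that $y$ is \emph{not} S-Willmore, yet your derivation of \eqref{eq-normal-bundle} rests entirely on ``the argument of Theorem \ref{thm-iso-willmore-1}.'' The present theorem makes no such assumption, and totally isotropic Willmore two-spheres in $S^6$ can perfectly well be S-Willmore (for instance totally isotropic two-spheres lying in $S^4\subset S^6$ are automatically S-Willmore; these are exactly the non-full case your parenthetical sets aside, and they also arise in the paper's own Theorem \ref{thm-min-iso}). In the S-Willmore case $D_{\bar z}\kappa$ is parallel to $\kappa$, so your $\mathcal E=\mathrm{Span}_{\C}\{\kappa,D_z\kappa,D_{\bar z}\kappa,\dots\}$ may have rank one, and in general it is not of constant rank, so it is not obviously a subbundle at all; then there is no second frame vector yet, and nothing so far forces the normal connection to be block diagonal --- which is precisely the input your final paragraph identifies as crucial. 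The paper closes this case separately: it takes $E_1$ spanning the globally defined line bundle determined by $\kappa$, completes it to $\{E_1,E_2\}$ using the globally defined rank-two bundle determined by $\{\kappa,D_z\kappa\}$ (citing \cite{MWW} for global well-definedness of both bundles), and then verifies \eqref{eq-normal-bundle} directly. Your proof needs this step, or some substitute, before the reduction to Wu's formula can begin.

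Your second half --- from \eqref{eq-normal-bundle} to the potential form \eqref{eq-iso-np} --- is correct, and it is in substance the paper's argument with a cleaner key lemma. Where the paper proves Lemma \ref{lemma-ode}, i.e.\ writes down the subalgebra $\mathfrak{k_2^{\C}}$ (which is precisely the commutant of your $J_n$ inside $\mathfrak{so}(4,\C)$), computes the corresponding subgroup $\mathfrak{K_2^{\C}}$ of $SO(4,\C)$ explicitly as in \eqref{eq-ode-2}, and then checks by hand that right multiplication by $K_2^{-1}\in\mathfrak{K_2^{\C}}$ preserves the column pairing of $K_1\tilde B_1$, you instead argue abstractly: $A_2$ commutes with $J_n$ (equivalent to block-diagonality of $D$), hence so does its holomorphic part, hence so does $K_2$, and then $\hat B_1J_n=i\hat B_1$ follows in one line. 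This buys the theorem without ever identifying $\mathfrak{K_2^{\C}}$ (whose explicit form the paper uses only for the side remark that its real form is $S^3\times S^1$). Two small points of care: the step ``consequently $K_2$ commutes with $J_n$'' deserves a line --- $J_nK_2J_n^{-1}$ solves the same initial value problem $K_2^{-1}dK_2=\tilde A_2\,dz$, $K_2(z_0)=I$, so it equals $K_2$ by uniqueness of solutions; and with your convention $J_n\psi_1=\psi_2$ the space $\mathcal E=\mathrm{Span}_{\C}\{\psi_1+i\psi_2,\psi_3+i\psi_4\}$ is the $(-i)$-eigenspace of $J_n$, not the $(+i)$-eigenspace, though this slip is harmless since both the identity $B_1J_n=iB_1$ (which you check directly on columns) and the commutant argument (which only needs $A_2$ to preserve both eigenspaces) are unaffected.
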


To derive Theorem \ref{thm-iso-willmore-2}, we first prove the following technical lemma.
\begin{lemma}\label{lemma-ode} Set
\begin{equation}\label{eq-ode}
\mathfrak{k_2^{\C}}:=\left\{A_2\ \left|\
A_2=\left(
          \begin{array}{cccc}
            0 & -\mathrm{b}_{12} &  -\mathrm{b}_{13}  &  -\mathrm{b}_{14}  \\
            \mathrm{b}_{12} & 0 &  \mathrm{b}_{14}  &  -\mathrm{b}_{13}  \\
            \mathrm{b}_{13} &  -\mathrm{b}_{14}  & 0 &  -\mathrm{b}_{34}  \\
            \mathrm{b}_{14} &  \mathrm{b}_{13}  & \mathrm{b}_{34}  & 0 \\
          \end{array}
        \right)\in \mathfrak{so}(4,\C)\right.
\right\} .
\end{equation}
Then, $\mathfrak{k_2^{\C}}$ is a Lie sub-algebra of $ \mathfrak{so}(4,\C)$.  Moreover, let $\mathfrak{K_2^{\C}}$ be the subgroup of $SO(4,\C)$ with Lie algebra $\mathfrak{k_2^{\C}}$. Then
\begin{equation}\label{eq-ode-2}
\mathfrak{K_2^{\C}}={\small \left\{K_2 \left| K_2= \left(
          \begin{array}{cccc}
             \mathrm{t}_{11} & -\mathrm{t}_{12} &  -\mathrm{t}_{13}  &  -\mathrm{t}_{14}  \\
            \mathrm{t}_{12} & \mathrm{t}_{11} &  \mathrm{t}_{14}  &  -\mathrm{t}_{13}  \\
            \mathrm{t}_{13} &  -\mathrm{t}_{14}  &\mathrm{t}_{11} &   \mathrm{t}_{12}  \\
            \mathrm{t}_{14} &  \mathrm{t}_{13}  & -\mathrm{t}_{12}  & \mathrm{t}_{11} \\
          \end{array}
        \right)\left(
                          \begin{array}{cccc}
                            1 &   &   &  \\
                              & 1 &  &   \\
                              &  &\cos \varphi & \sin\varphi \\
                              &  & -\sin\varphi &\cos \varphi \\
                          \end{array}
                        \right)\right.\in SO(4,\C)
\right\}} .
\end{equation}
                        \end{lemma}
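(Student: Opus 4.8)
The plan is to recognize the matrix pattern in \eqref{eq-ode} as a distinguished copy of $\C\oplus\mathfrak{sl}(2,\C)$ inside $\mathfrak{so}(4,\C)$, and then to realize the corresponding group as (complexified) unit quaternions times a plane rotation. Writing $E_{ij}=e_ie_j^t-e_je_i^t$ for the standard basis of $\mathfrak{so}(4,\C)$, a direct reading of \eqref{eq-ode} gives $A_2=-\mathrm{b}_{12}E_{12}-\mathrm{b}_{13}(E_{13}+E_{24})-\mathrm{b}_{14}(E_{14}-E_{23})-\mathrm{b}_{34}E_{34}$, so that $\mathfrak{k_2^{\C}}=\mathrm{Span}_{\C}\{E_{12},E_{34},E_{13}+E_{24},E_{14}-E_{23}\}$. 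Setting $A_1=E_{12}+E_{34}$ together with $B_1=E_{12}-E_{34}$, $B_2=E_{13}+E_{24}$, $B_3=E_{14}-E_{23}$, this is the same subspace as $\C A_1\oplus\mathrm{Span}_{\C}\{B_1,B_2,B_3\}$.

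For the first assertion I would invoke the self-dual/anti-self-dual splitting $\mathfrak{so}(4,\C)\cong\mathfrak{sl}(2,\C)\oplus\mathfrak{sl}(2,\C)$. One checks the quaternion relations $B_i^2=-I$, $B_iB_j=-B_jB_i$ for $i\neq j$, and $B_1B_2=B_3$ (and cyclically), whence $[B_i,B_j]=2\varepsilon_{ijk}B_k$, so $\mathfrak{b}:=\mathrm{Span}_{\C}\{B_1,B_2,B_3\}$ is an $\mathfrak{sl}(2,\C)$-subalgebra. A short computation (e.g. $[A_1,B_2]=[E_{12}+E_{34},E_{13}+E_{24}]=-E_{23}+E_{14}-E_{14}+E_{23}=0$, and similarly for $B_1,B_3$) shows $A_1$ lies in the complementary factor and commutes with all $B_i$. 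Therefore $[\mathfrak{k_2^{\C}},\mathfrak{k_2^{\C}}]\subseteq\mathfrak b\subseteq\mathfrak{k_2^{\C}}$, so $\mathfrak{k_2^{\C}}=\C A_1\oplus\mathfrak b$ is a Lie subalgebra with $\C A_1$ central. If a more self-contained argument is wanted, one simply computes the six brackets of the basis $\{E_{12},E_{34},B_2,B_3\}$ and checks closure directly.

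For the second assertion I would first verify $K_2\in SO(4,\C)$. The right-hand factor is the rotation $\exp(\varphi E_{34})$, which is orthogonal with determinant $1$; writing the left factor as $T=\mathrm{t}_{11}I-\mathrm{t}_{12}B_1-\mathrm{t}_{13}B_2-\mathrm{t}_{14}B_3$ and using $B_i^t=-B_i$, $B_i^2=-I$, $B_iB_j=-B_jB_i$, one gets $T^tT=(\mathrm{t}_{11}^2+\mathrm{t}_{12}^2+\mathrm{t}_{13}^2+\mathrm{t}_{14}^2)I$. Hence $K_2$ is orthogonal exactly when $\sum_i \mathrm{t}_{1i}^2=1$ (the constraint implicit in demanding $K_2\in SO(4,\C)$), with $\det K_2=1$. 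Under this constraint the matrices $T$ are precisely the image $\mathcal T$ of the unit complexified quaternions under the algebra homomorphism $1,i,j,k\mapsto I,B_1,B_2,B_3$; in particular $\mathcal T$ is a subgroup with Lie algebra $\mathfrak b$, isomorphic to $SL(2,\C)$.

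It then remains to show that the product set $\mathcal G=\mathcal T\cdot\{\exp(\varphi E_{34})\}$ from \eqref{eq-ode-2} is exactly the connected subgroup $\mathfrak{K_2^{\C}}$, and this is the main obstacle, because $\exp(\varphi E_{34})$ does not commute with $\mathcal T$, so closure under multiplication is invisible from the explicit formula. The device that resolves it is the identity $\exp(\varphi E_{34})=\exp(\tfrac{\varphi}{2}A_1)\exp(-\tfrac{\varphi}{2}B_1)$, valid since $A_1$ and $B_1$ commute and $A_1-B_1=2E_{34}$. As $\exp(-\tfrac{\varphi}{2}B_1)=\cos\tfrac{\varphi}{2}\,I-\sin\tfrac{\varphi}{2}\,B_1\in\mathcal T$ and $\exp(\tfrac{\varphi}{2}A_1)$ commutes with $\mathcal T$, one trades the non-commuting $(3,4)$-rotation for the central generator $A_1$ and obtains $\mathcal G=\mathcal T\cdot\{\exp(\theta A_1)\}=\{\exp(\theta A_1)\}\cdot\mathcal T$, a product of two commuting subgroups, hence itself a connected subgroup of $SO(4,\C)$. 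Differentiating the parametrization in $\mathrm{t}_{12},\mathrm{t}_{13},\mathrm{t}_{14},\varphi$ at the identity gives tangent space $\mathrm{Span}_{\C}\{-B_1,-B_2,-B_3,E_{34}\}=\mathfrak{k_2^{\C}}$; so by uniqueness of the connected Lie subgroup with prescribed Lie algebra, $\mathcal G=\mathfrak{K_2^{\C}}$, which is exactly \eqref{eq-ode-2}.
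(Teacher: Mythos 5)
Your proof is correct, but it takes a genuinely different route from the paper's. For the subalgebra claim, the paper simply computes the bracket $[A_2,\widetilde{A_2}]$ of two general elements of $\mathfrak{k_2^{\C}}$ and checks that the result again has the form \eqref{eq-ode}; your identification $\mathfrak{k_2^{\C}}=\C A_1\oplus\mathfrak{b}$, with $A_1=E_{12}+E_{34}$ self-dual and central, and $\mathfrak{b}={\rm Span}_{\C}\{B_1,B_2,B_3\}$ the full anti-self-dual $\mathfrak{sl}(2,\C)$-factor, replaces that computation by a structural observation that also explains \emph{why} the bracket closes. For the group claim, both arguments recognize the left factor $T=\mathrm{t}_{11}I-\mathrm{t}_{12}B_1-\mathrm{t}_{13}B_2-\mathrm{t}_{14}B_3$ as a quaternionic subgroup with Lie algebra $\mathfrak{b}$ (the paper calls this algebra $\widetilde{\mathfrak{k_2^{\C}}}$), but they diverge precisely where the non-commuting factor $K_\varphi$ must be handled: the paper invokes the conjugation stability $K_{\varphi}^{-1}\widetilde{\mathfrak{k_2^{\C}}}K_{\varphi}\subset\mathfrak{k_2^{\C}}$, which implicitly shows that the product set in \eqref{eq-ode-2} is an integral manifold of the left-invariant distribution determined by $\mathfrak{k_2^{\C}}$, whereas you trade $K_\varphi$ for the central one-parameter group via the factorization $K_\varphi=\exp(\tfrac{\varphi}{2}A_1)\exp(-\tfrac{\varphi}{2}B_1)$, so that the right-hand side of \eqref{eq-ode-2} becomes the product $\mathcal{T}\cdot\exp(\C A_1)$ of two commuting connected subgroups and is therefore \emph{manifestly} a connected Lie subgroup, identified with $\mathfrak{K_2^{\C}}$ by uniqueness of the connected subgroup with a prescribed subalgebra. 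Your version buys transparency: the group property of the set \eqref{eq-ode-2}, which the paper's one-line conjugation argument leaves largely to the reader, is immediate once the commuting factorization is in hand. The paper's version buys elementarity: it needs nothing beyond explicit matrix multiplication, with no appeal to the quaternion model, the self-dual splitting, or the correspondence between subalgebras and connected subgroups.
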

\begin{proof}
Let
\[A_2=\left(
          \begin{array}{cccc}
            0 & -\mathrm{b}_{12} &  -\mathrm{b}_{13}  &  -\mathrm{b}_{14}  \\
            \mathrm{b}_{12} & 0 &  \mathrm{b}_{14}  &  -\mathrm{b}_{13}  \\
            \mathrm{b}_{13} &  -\mathrm{b}_{14}  & 0 &  -\mathrm{b}_{34}  \\
            \mathrm{b}_{14} &  \mathrm{b}_{13}  & \mathrm{b}_{34}  & 0 \\
          \end{array}
        \right),\  \widetilde{A_2}=\left(
          \begin{array}{cccc}
            0 & - \widetilde{\mathrm{b}}_{12} &  -\widetilde{\mathrm{b}}_{13}  &  -\widetilde{\mathrm{b}}_{14}  \\
            \widetilde{\mathrm{b}}_{12} & 0 &  \widetilde{\mathrm{b}}_{14}  &  -\widetilde{\mathrm{b}}_{13}  \\
            \widetilde{\mathrm{b}}_{13} &  -\widetilde{\mathrm{b}}_{14}  & 0 &  -\widetilde{\mathrm{b}}_{34}  \\
            \widetilde{\mathrm{b}}_{14} &  \widetilde{\mathrm{b}}_{13}  & \widetilde{\mathrm{b}}_{34}  & 0 \\
          \end{array}
        \right) .\]
        A direct computation shows
        \[[A_2,\tilde{A}_2]=\left(
          \begin{array}{cccc}
            0 &   2\mathrm{b}_{13}\widetilde{\mathrm{b}}_{14}-2\widetilde{\mathrm{b}}_{13}\mathrm{b}_{14} &  -\widehat{\mathrm{b}}_{13}  &  -\widehat{\mathrm{b}}_{14}  \\
          2\widetilde{\mathrm{b}}_{13}\mathrm{b}_{14}- 2\mathrm{b}_{13}\widetilde{\mathrm{b}}_{14} & 0 &  \widehat{\mathrm{b}}_{14}  &  -\widehat{\mathrm{b}}_{13}  \\
            \widehat{\mathrm{b}}_{13} &  -\widehat{\mathrm{b}}_{14}  & 0 &  2\widetilde{\mathrm{b}}_{13}\mathrm{b}_{14}- 2\mathrm{b}_{13}\widetilde{\mathrm{b}}_{14} \\
            \widehat{\mathrm{b}}_{14} &  \widehat{\mathrm{b}}_{13}  &  2\mathrm{b}_{13}\widetilde{\mathrm{b}}_{14}-2\widetilde{\mathrm{b}}_{13}\mathrm{b}_{14}   & 0 \\
          \end{array}
        \right),\]
with
$
\widehat{\mathrm{b}}_{13} =(\mathrm{b}_{12}-\mathrm{b}_{34})\widetilde{\mathrm{b}}_{14}- (\widetilde{\mathrm{b}}_{12}-\widetilde{\mathrm{b}}_{34})\mathrm{b}_{14},\
\widehat{\mathrm{b}}_{14} =(\mathrm{b}_{34}-\mathrm{b}_{12})\widetilde{\mathrm{b}}_{13}- (\widetilde{\mathrm{b}}_{34}-\widetilde{\mathrm{b}}_{12})\mathrm{b}_{13}.\
$
The Lie algebra  $\widetilde{\mathfrak{k_2^{\C}}}$ of
\[ \left\{K_2 \left| K_2= \left(
          \begin{array}{cccc}
             \mathrm{t}_{11} & -\mathrm{t}_{12} &  -\mathrm{t}_{13}  &  -\mathrm{t}_{14}  \\
            \mathrm{t}_{12} & \mathrm{t}_{11} &  \mathrm{t}_{14}  &  -\mathrm{t}_{13}  \\
            \mathrm{t}_{13} &  -\mathrm{t}_{14}  &\mathrm{t}_{11} &   \mathrm{t}_{12}  \\
            \mathrm{t}_{14} &  \mathrm{t}_{13}  & -\mathrm{t}_{12}  & \mathrm{t}_{11} \\
          \end{array}
        \right)\right.\in SO(4,\C)
\right\}\]
is
\[\widetilde{\mathfrak{k_2^{\C}}}:=\left\{A_2\ \left|\
A_2=\left(
          \begin{array}{cccc}
            0 & -\mathrm{b}_{12} &  -\mathrm{b}_{13}  &  -\mathrm{b}_{14}  \\
            \mathrm{b}_{12} & 0 &  \mathrm{b}_{14}  &  -\mathrm{b}_{13}  \\
            \mathrm{b}_{13} &  -\mathrm{b}_{14}  & 0 &  \mathrm{b}_{12}  \\
            \mathrm{b}_{14} &  \mathrm{b}_{13}  & -\mathrm{b}_{12}  & 0 \\
          \end{array}
        \right)\in \mathfrak{so}(4,\C)\right.
\right\} .\]
Since $K_{\varphi}^{-1}\widetilde{\mathfrak{k_2^{\C}}}K_{\varphi}\subset\mathfrak{k_2^{\C}}$, we see that $\mathfrak{K_2^{\C}}$ is the subgroup of $SO(4,\C)$ with Lie algebra $\mathfrak{k_2^{\C}}$. Here
\[K_{\varphi}=\left(
                          \begin{array}{cccc}
                            1 &   &   &  \\
                              & 1 &  &   \\
                              &  &\cos \varphi & \sin\varphi \\
                              &  & -\sin\varphi &\cos \varphi \\
                          \end{array}
                        \right),\]

\end{proof}
\begin{remark} \eqref{eq-ode-2} shows that the real subgroup $\widehat{\mathfrak{K}_2}=\{K_2\in\mathfrak{K}_2| K_2=\bar{K}_2 \}$ is diffeomorphic to $S^3\times S^1$.
\end{remark}\ \\
{\em Proof of Theorem \ref{thm-iso-willmore-2}.}
If $y$ is not S-Willmore, \eqref{eq-normal-bundle} comes from Theorem \ref{thm-iso-willmore-1}. If $y$ is S-Willmore, then let
$E_1$ be a basis of the bundle spanned by $\kappa$ (this bundle is globally defined, see \cite{Ejiri1988} or \cite{MWW} for a proof). Let $E_1,$ $E_2$ be a basis of the bundle spanned by $\kappa,D_z\kappa$ (this bundle is globally defined, see \cite{MWW} for a proof). It is straightforward to verify that \eqref{eq-normal-bundle} holds.

  Now we apply \eqref{eq-normal-bundle}.  Set $E_1=\psi_1+i\psi_2,$ $E_2=\psi_3+i\psi_4$. Then we have a frame $F$ of the form \eqref{F}. Under this framing, we have
  \[B_1=
                                        \left(
                                          \begin{array}{ccccc}
                                            \sqrt{2}\beta_1 & \sqrt{2}i\beta_1&  \sqrt{2}\beta_3 & \sqrt{2}i\beta_3\\
                                            -\sqrt{2}\beta_1 & -\sqrt{2}i\beta_1&  -\sqrt{2}\beta_3 & -\sqrt{2}i\beta_3\\
                                            -k_{1} & -ik_{1}& -k_{3} & -ik_{3} \\
                                            -ik_{1} & k_{1}& -ik_{3} & k_{3}\\
                                          \end{array}
                                        \right) ~ \hbox{ and } A_2=\left(
          \begin{array}{cccc}
            0 & -\mathrm{b}_{12} &  -\mathrm{b}_{13}  &  -\mathrm{b}_{14}  \\
            \mathrm{b}_{12} & 0 &  \mathrm{b}_{14}  &  -\mathrm{b}_{13}  \\
            \mathrm{b}_{13} &  -\mathrm{b}_{14}  & 0 &  -\mathrm{b}_{34}  \\
            \mathrm{b}_{14} &  \mathrm{b}_{13}  & \mathrm{b}_{34}  & 0 \\
          \end{array}
        \right).\]
        Then the normalized potential of $y$ is expressed by \eqref{eq-wu}.

   The holomorphic part  $\tilde{B}_1$ of $B_1$ has the same form as $B_1$ and since $K_1$ does not change the relations between the columns of $\tilde{B}_1$, we need only to consider the influence of $K_2$ on $\tilde{B}_1$.
   Note that $A_2$ takes value in $\mathfrak{k_2^{\C}}$. So the holomorphic part $\tilde{A}_2$ of $A_2$ also takes value in $\mathfrak{k_2^{\C}}$. Therefore, the integration $\hat{A}_2=\int_{z_0}^z\tilde{A}_2dz$ of $\tilde{A}_2$ also takes value in $\mathfrak{k_2^{\C}}$. By Lemma \ref{lemma-ode}, $K_2$ takes value in $\mathfrak{K_2^{\C}}$.
  Summing up, we can assume that the following two equations hold
  \[   K_2= \left(
          \begin{array}{cccc}
             \mathrm{t}_{11} & -\mathrm{t}_{12} &  -\mathrm{t}_{13}  &  -\mathrm{t}_{14}  \\
            \mathrm{t}_{12} & \mathrm{t}_{11} &  \mathrm{t}_{14}  &  -\mathrm{t}_{13}  \\
            \mathrm{t}_{13} &  -\mathrm{t}_{14}  &\mathrm{t}_{11} &   \mathrm{t}_{12}  \\
            \mathrm{t}_{14} &  \mathrm{t}_{13}  & -\mathrm{t}_{12}  & \mathrm{t}_{11} \\
          \end{array}
        \right)\left(
                          \begin{array}{cccc}
                            1 &   &   &  \\
                              & 1 &  &   \\
                              &  &\cos \varphi & \sin\varphi \\
                              &  & -\sin\varphi &\cos \varphi \\
                          \end{array}
                        \right)\]
  and
\[K_1\tilde{B}_1=   \left(
                                          \begin{array}{ccccc}
                                            \widehat{\mathrm{h}}_{11} & i \widehat{\mathrm{h}}_{11}&   \widehat{\mathrm{h}}_{12} & i \widehat{\mathrm{h}}_{12} \\
                                             \widehat{\mathrm{h}}_{21} & i \widehat{\mathrm{h}}_{21}&   \widehat{\mathrm{h}}_{22} & i \widehat{\mathrm{h}}_{22} \\
                                             \widehat{\mathrm{h}}_{31} & i \widehat{\mathrm{h}}_{31}&   \widehat{\mathrm{h}}_{32} & i \widehat{\mathrm{h}}_{32} \\
                                             \widehat{\mathrm{h}}_{41} & i \widehat{\mathrm{h}}_{41}&   \widehat{\mathrm{h}}_{42} & i \widehat{\mathrm{h}}_{42} \\
                                          \end{array}
                                        \right).\]
Then $K_1\tilde{B}_1 K_2^{-1}$ has the form
\[ \left(
                                          \begin{array}{ccccc}
                                            h_{11} & ih_{11}&  h_{12} & ih_{12} \\
                                            h_{21} & ih_{21}&  h_{22} & ih_{22} \\
                                            h_{31} & ih_{31}&  h_{32} & ih_{32} \\
                                            h_{41} & ih_{41}&  h_{42} & ih_{42} \\
                                          \end{array}
                                        \right), \hbox{ with } \left\{\begin{split}h_{j1}=&\widehat{\mathrm{h}}_{j1}(\mathrm{t}_{11}-i\mathrm{t}_{12})-\widehat{\mathrm{h}}_{j2}(\mathrm{t}_{13}+i\mathrm{t}_{14}),\\
h_{j2}=&\left(\widehat{\mathrm{h}}_{j1}(\mathrm{t}_{13}-i\mathrm{t}_{14})+\widehat{\mathrm{h}}_{j2}(\mathrm{t}_{11}+i\mathrm{t}_{12})\right)\\
&\cdot(\cos\varphi-i\sin\varphi),\ 1\leq j\leq4.
\end{split}\right.
\]
  \hfill$\Box $

\begin{remark}Different from the case in $S^4$, where totally isotropic surfaces are automatically S-Willmore surfaces of finite uniton type, totally isotropic surfaces in $S^6$ are not even Willmore in general. Moreover, for a totally isotropic Willmore surface in $S^6$, if the holomorphic $4-$form $\Omega dz^4\neq0$ (hence this surface is not S-Willmore), it is full in $S^6$ and is not of finite uniton type. Given the fact that such surfaces come from the twistor projection of holomorphic or anti-holomorphic curves of the twistor bundle $\mathfrak{T}S^6$ of $S^6$, they can be expressed by rational functions on the Riemann surface.
  The existence of such harmonic maps which are not of finite uniton type is somewhat unexpected since they correspond to holomorphic or anti-holomorphic curves in the twistor bundle of $S^6$.   And it will be an interesting topic to classify and/or to characterize such harmonic maps
  as well as the corresponding Willmore surfaces, especially when the Riemann surface is a torus. As a consequence, it will be an interesting topic to generalize the work of Bohle on Willmore tori \cite{Bohle} to Willmore tori in $S^6$.
\end{remark}

The proof of Theorem \ref{thm-iso-willmore-1} below also reveals other aspects of isotropic surfaces of finite uniton type in $S^6$.\vspace{2mm}
\\
{\em Proof of Theorem \ref{thm-iso-willmore-1}.}
 If $y$ is non S-Willmore with $\langle\kappa,\kappa\rangle=0$, we claim that its normalized potential can only take the form of type (3). By Theorem 3.1 of Section 3, $y$ is totally isotropic and its normal connection has the desired form.

Now let's prove the claim. Theorem 2.8 of \cite{Wang-1} and Theorem 5.2 of \cite{DoWa1} show that $B_1$ must be either of type 2 or of type 3 of Theorem 2.8.

On the other hand, as we have seen before, the isotropy condition and the Willmore equation show
\[\langle\kappa,\kappa\rangle=\langle D_{\bar z}\kappa,\kappa\rangle=\langle D_{\bar z}\kappa,D_{\bar{z}}\kappa\rangle=0.\]
This yields that the Maurer-Cartan form of $y$ satisfies
$B_1 B_1^t=0.$
Then $\tilde{B}_1$, the holomorphic part  of $B_1$,  also satisfies $\tilde{B}_1\tilde{B}_1^t=0.$
As a consequence, we have
\[\hat{B}_1\hat{B}_1^t=K_1 \tilde{B}_1K_2^{-1}(K_2^{-1})^t \tilde{B}_1^tK_1^{t}=K_1 \tilde{B}_1 \tilde{B}_1^tK_1^{t}=0.\]
If the normalized potential $\eta$ of $y$ is of type 2 in Theorem 2.8 of \cite{Wang-1}, then
\[ \hat{B}_{1}=\left(
\begin{array}{cccc}
                                            h_{11} & i {h}_{11} &  h_{12} & f_1{h}_{12}  \\
                                             h_{21} & i {h}_{21} &  h_{12}& f_1{h}_{12}\\
                                            h_{31}& i {h}_{31} &  h_{32}& f_1{h}_{32} \\
                                            h_{41}& i {h}_{41} &  ih_{32}& if_1h_{32} \\
 \end{array}
\right).\]
So the condition $\hat{B}_1\hat{B}_1^t=0$ force $1+f_1^2=0$. So $f_1=i$ or $f_1=-i$. This indicates that $\eta$ is of type 3 (up to a conjugation).
 \hfill$\Box $\\

\section{Construction of totally isotropic Willmore two-spheres in $S^6$}

This section is devoted to an investigation of the geometric properties of Willmore surfaces of type 3 of Theorem 3.3 of \cite{Wang-1}.  For this purpose, by a concrete Iwasawa decomposition, we first provide an algorithm to derive a concrete construction of such Willmore surfaces in $S^6$ from the normalized potentials of  type 3  of Theorem 3.3 of \cite{Wang-1}.  The geometric properties of this kind of Willmore surfaces are also revealed naturally. During this procedure, we will see that Willmore surfaces of this type will be a special kind of totally isotropic Willmore surfaces in $S^6$, which has been shown that their normalized potentials are of type 3 in Section 2.

To achieve these results, we will first transform  $SO^+(1,7,\C)$ isometrically into the Lie group $G(8,\mathbb{C})$, so that the images of normalized potentials of  type 3 in $\mathfrak{g}(8,\C)$ are strictly upper-triangular. This will significantly simplify the computations of the corresponding Iwasawa decompositions. Actually, after this transformation we will be able to derive the corresponding Iwasawa decompositions in a straightforward way.  Moreover, an explicit formula for $y$  is obtained by a detailed discussion of the Maurer-Cartan form derived from the Iwasawa decomposition.
 This presents  an algorithm to construct concrete totally isotropic Willmore two spheres in $S^6$. As illustrations, such new examples  in $S^6$ are constructed this way.

This section has three parts. The main theorem and the new examples are stated first.  The technical lemmas combining a proof of Theorem \ref{th-iso} are stated in the end. The concrete proofs and constructions of examples are postponed to two appendixes.

\subsection{From potentials to surfaces}

We restate the third case of Theorem 3.3 of \cite{Wang-1} by the following theorem

\begin{theorem}\label{th-iso} Let $y$ be a Willmore surface in $S^6$ with its normalized potential being of the form \eqref{eq-iso-np}.
Then $y$ is totally isotropic in $S^6$. Moreover, locally there exists an isotropic frame $\{E_1, E_2\}$ of the normal bundle $V^{\perp}_{\C}$ of $y$ such that \eqref{eq-normal-bundle} holds.
\end{theorem}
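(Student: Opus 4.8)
The plan is to run the DPW recipe of Theorem \ref{thm-DPW}(2) starting from $\eta$ and to read the conformal data off the resulting extended frame. First I would integrate $\eta=\lambda^{-1}\eta_{-1}\,dz$ to the meromorphic $F_-(z,\lambda)$ with $F_-(0,\lambda)=I$, and then apply the Iwasawa splitting $F_-=\tilde F\tilde F_+$ of Theorem \ref{thm-iwasawa}, with $\tilde F\in\Lambda G_{\sigma}$ and $\tilde F_+\in\Lambda^+_S G^{\mathbb C}_{\sigma}$. Then $\tilde F(z,\bar z,\lambda)$ is an extended frame of the harmonic conformal Gauss map of $y$, and the surface together with all its local invariants is recovered from the Maurer--Cartan form $\alpha_\lambda=\tilde F^{-1}d\tilde F=\lambda^{-1}\alpha_1'+\alpha_0+\lambda\alpha_1''$ evaluated at $\lambda=1$: the off-diagonal block $B_1$ of $\alpha_1'$ encodes $\kappa$ and $D_{\bar z}\kappa$ through \eqref{eq-b1}, and the lower-right block $A_2$ of $\alpha_0$ is the normal connection.

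The isotropy of the Hopf differential comes essentially for free. Writing $\tilde F_+=\sum_{k\ge0}\lambda^k F_+^{(k)}$ and collecting the $\lambda^{-1}$-terms in $\alpha_\lambda=\tilde F_+\eta\tilde F_+^{-1}-(d\tilde F_+)\tilde F_+^{-1}$ gives $\alpha_1'=\mathrm{Ad}(F_+^{(0)})\eta_{-1}\,dz$, where $F_+^{(0)}=\tilde F_+|_{\lambda=0}\in S\subset K^{\mathbb C}$ is block diagonal, $F_+^{(0)}=\mathrm{diag}(P_1,P_2)$ with $P_1\in O^+(1,3,\C)$ and $P_2\in O(4,\C)$. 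Hence $B_1=P_1\hat B_1 P_2^{-1}$. Since the columns of $\hat B_1$ in \eqref{eq-iso-np} are $(v_1,iv_1,v_2,iv_2)$ with $v_1=(h_{11},h_{21},h_{31},h_{41})^t$, $v_2=(h_{12},h_{22},h_{32},h_{42})^t$, one has $\hat B_1\hat B_1^t=v_1v_1^t+(iv_1)(iv_1)^t+v_2v_2^t+(iv_2)(iv_2)^t=0$, and as $P_2\in O(4,\C)$ satisfies $P_2^{-1}(P_2^{-1})^t=I_4$ this yields $B_1B_1^t=P_1(\hat B_1\hat B_1^t)P_1^t=0$. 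By \eqref{eq-b1} this is exactly $\langle\kappa,\kappa\rangle=\langle\kappa,D_{\bar z}\kappa\rangle=\langle D_{\bar z}\kappa,D_{\bar z}\kappa\rangle=0$, so the Hopf differential is isotropic.

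The hard part will be to upgrade this to the full normal-bundle statement \eqref{eq-normal-bundle}. For $B_1$ itself to keep the pattern $(v,iv,w,iw)$ --- equivalently for $\kappa,D_{\bar z}\kappa\in\mathrm{Span}_{\C}\{E_1,E_2\}$ with $E_1=\psi_1+i\psi_2$, $E_2=\psi_3+i\psi_4$ --- I need $P_2$ to preserve the isotropic $2$-plane $\mathrm{Span}_{\C}\{E_1,E_2\}$, i.e. the lower Iwasawa block must land in the group $\mathfrak{K_2^{\C}}$ of Lemma \ref{lemma-ode}; and I must show the normal block $A_2$ of $\alpha_0$ takes values in $\mathfrak{k_2^{\C}}$, which by Lemma \ref{lemma-ode} is precisely the statement that $D_zE_i,D_{\bar z}E_i\in\mathrm{Span}_{\C}\{E_1,E_2\}$, i.e. that the normal connection is block diagonal in $\{E_1,E_2,\bar E_1,\bar E_2\}$. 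This is where the explicit Iwasawa decomposition is unavoidable and is the main obstacle: after transforming $SO^+(1,7,\C)$ into $G(8,\C)$ so that $\eta_{-1}$ becomes strictly upper triangular, I would carry out the decomposition explicitly and verify directly that the gauge factors respect the column pattern of $B_1$ and keep $A_2$ inside $\mathfrak{k_2^{\C}}$; Lemma \ref{lemma-ode}, which guarantees that integrating a $\mathfrak{k_2^{\C}}$-valued form stays inside $\mathfrak{K_2^{\C}}$, is exactly the closure statement that makes the relevant factors never leave the structure. These are the technical lemmas and appendix computations flagged in the text.

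Granting \eqref{eq-normal-bundle}, total isotropy is then a clean induction requiring no further analytic input. Put $U^+:=\mathrm{Span}_{\C}\{Y,Y_z,E_1,E_2\}$. Using the structure equations \eqref{eq-moving} together with $\kappa\in\mathrm{Span}_{\C}\{E_1,E_2\}$, $\langle E_i,E_j\rangle=0$, $\langle E_i,D_{\bar z}\kappa\rangle=0$ and $D_zE_i\in\mathrm{Span}_{\C}\{E_1,E_2\}$, one checks $\partial_z Y=Y_z$, $\partial_z Y_z=-\tfrac s2 Y+\kappa$ and $\partial_z E_i=D_zE_i+2\langle E_i,D_{\bar z}\kappa\rangle Y-2\langle E_i,\kappa\rangle Y_{\bar z}=D_zE_i$ all lie in $U^+$, so $\Gamma(U^+)$ is invariant under $\partial_z$ and therefore $Y_z^{(m)}\in U^+$ for every $m\ge1$. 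Since $U^+$ is totally isotropic ($\langle Y,Y\rangle=\langle Y,Y_z\rangle=\langle Y,E_i\rangle=\langle Y_z,Y_z\rangle=\langle Y_z,E_i\rangle=\langle E_i,E_j\rangle=0$), it follows that $\langle Y_z^{(m)},Y_z^{(n)}\rangle=0$ for all $m,n$, i.e. $y$ is totally isotropic, which completes the proof.
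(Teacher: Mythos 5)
Your overall plan is the paper's plan, and two of your three pieces are sound. The conjugation argument in your second paragraph is correct: $\alpha_1'=\mathrm{Ad}(F_+^{(0)})\eta_{-1}\,dz$ with $F_+^{(0)}=\mathrm{diag}(P_1,P_2)\in K^{\mathbb C}$, the column pattern of $\hat B_1$ gives $\hat B_1\hat B_1^t=0$, and orthogonality of $P_2$ then gives $B_1B_1^t=0$, i.e.\ $\langle\kappa,\kappa\rangle=\langle\kappa,D_{\bar z}\kappa\rangle=\langle D_{\bar z}\kappa,D_{\bar z}\kappa\rangle=0$ (the paper uses the same identity in the reverse direction in its proof of Theorem \ref{thm-iso-willmore-1}; note that strictly \eqref{eq-b1} refers to an adapted frame, but the remaining $K$-gauge freedom preserves $B_1B_1^t=0$, so your conclusion survives). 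Likewise your closing induction is valid: granting \eqref{eq-normal-bundle}, the subbundle $U^+=\mathrm{Span}_{\mathbb C}\{Y,Y_z,E_1,E_2\}$ is totally isotropic and $\partial_z$-invariant, so all $Y_z^{(m)}$ lie in it; this is in fact a cleaner formulation than the corresponding passage at the end of the paper's Lemma \ref{lemma-iso5}.

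The genuine gap is exactly where you write that you ``would carry out the decomposition explicitly'': you never establish \eqref{eq-normal-bundle}, which is both half of the theorem's conclusion and the hypothesis on which your final induction rests. Nothing in the general Iwasawa/DPW theory forces $P_2$ into the group $\mathfrak{K_2^{\C}}$ of Lemma \ref{lemma-ode} or forces $A_2$ into $\mathfrak{k_2^{\C}}$: the only thing that survives conjugation by an arbitrary element of $O(4,\mathbb C)$ is $B_1B_1^t=0$, which says the columns of $B_1$ span an isotropic subspace but does \emph{not} give the pattern $(v,iv,w,iw)$, let alone the block-diagonality of the normal connection in an isotropic frame. Supplying this is the actual content of the paper's proof: Lemma \ref{lemma-iso1} transports everything to $G(8,\mathbb C)$ where $\eta_{-1}$ becomes strictly block upper-triangular, Lemma \ref{lemma-iso2} gives the explicit meromorphic frame $H=I+\lambda^{-1}H_1+\lambda^{-2}H_2$, Lemma \ref{lemma-iso3} solves the Iwasawa factorization by undetermined coefficients via the system \eqref{eq-iso-mc:1}, and Lemmas \ref{lemma-iso4}--\ref{lemma-iso5} show that the resulting Maurer--Cartan form has the $2{+}4{+}2$ block-triangular shape \eqref{eq-iso-m-c1}, which, pulled back through $\check{\mathcal P}^{-1}$, yields simultaneously the column pattern of $B_1$ and $A_2\in\mathfrak{k_2^{\C}}$, i.e.\ \eqref{eq-normal-bundle}. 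Until that computation (or some substitute structural argument) is actually performed, what you have proved unconditionally is only the isotropy of the Hopf differential, and the total isotropy only conditionally on \eqref{eq-normal-bundle}; the theorem itself remains open in your write-up.
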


\subsection{Examples of totally isotropic Willmore spheres in $S^6$}

We have two kinds examples to illustrate the algorithm presented in the proof of Theorem \ref{th-iso}. The isotropic minimal surfaces in $\R^4$ are used to illustrate the algorithm with simpler computations. The new, totally isotropic, non S-Willmore, Willmore two-sphere in $S^6$ is constructed to answer Ejiri's question explicitly.

\begin{theorem} \label{thm-min-iso}Let \begin{equation} \eta=\lambda^{-1}\left(
                      \begin{array}{cc}
                        0 & \hat{B}_1 \\
                        -\hat{B}_1^tI_{1,3} & 0 \\
                      \end{array}
                    \right)dz,\ \hbox{ with }\  \hat{B}_1=\frac{1}{2}\left(
                     \begin{array}{cccc}
                      -if_2' &  f_2' & 0 & 0 \\
                      if_2'&  -f_2' & 0 & 0 \\
                      f_4' & if_4' & 0 & 0  \\
                      if_4' & -f_4' & 0 & 0  \\
                     \end{array}
                   \right).\end{equation}
Here $f_2$ and $f_4$ are (non-constant) meromorphic functions on  $\C$.
               This $\hat{B}_1$ is of both type (1) and type (3) in Theorem 2.8 of \cite{Wang-1}. The corresponding associated family of Willmore surfaces is
\begin{equation}\label{eq-min-iso}[Y_{\lambda}]=\left[\left(
                                             \begin{array}{cccc}
                       (1+|f_2|^2)-\frac{\bar{f}_2f_4f_2'}{f_4'} -\frac{f_2\bar{f}_4\overline{f_2'}}{\overline{f_4'}}+ \frac{|f_2'|^2(1+|f_4|^2)}{|f_4'|^2}  \\
                        (1-|f_2|^2)+\frac{\bar{f}_2f_4f_2'}{f_4'} +\frac{f_2\bar{f}_4\overline{f_2'}}{\overline{f_4'}}- \frac{|f_2'|^2(1+|f_4|^2)}{|f_4'|^2} \\
                        -\frac{if_2'}{f_4'} +\frac{i\overline{f_2'}}{\overline{f_4'}} \\
                        -\frac{f_2'}{f_4'}-\frac{\overline{f_2'}}{\overline{f_4'}} \\
                     -i(\lambda^{-1}f_2-\lambda\bar{f}_2)+\frac{i\lambda^{-1}f_2'f_4}{f_4'}-\frac{i\lambda\overline{f_2'}\bar{f}_4}{\overline{f_4'}} \\
                     (\lambda^{-1}f_2+\lambda\bar{f}_2)-\frac{\lambda^{-1}f_2'f_4}{f_4'}-\frac{\lambda\overline{f_2'}\bar{f}_4}{\overline{f_4'}} \\
                       0   \\
                       0 \\
                                             \end{array}
                                           \right)\right].\end{equation}
\end{theorem}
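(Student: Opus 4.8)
The plan is to run the DPW recipe of Theorem~\ref{thm-DPW} forward: integrate the given normalized potential $\eta$, perform the Iwasawa splitting, and extract $[Y_\lambda]$ from the resulting extended frame. I would first dispose of the classification claim by inspection. Writing out $\hat{B}_1$, its last two columns vanish while its first two columns satisfy (second column) $=i\cdot$(first column); matching this column pattern against the general isotropic normalized potential \eqref{eq-iso-np} and against the list of Theorem~2.8 of \cite{Wang-1}, one reads off directly that the degenerate structure (first two columns proportional, last two zero) places $\hat{B}_1$ simultaneously in type~(1) and type~(3). This is a purely algebraic verification requiring no analysis.

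Next comes the integration of $\eta$. The key observation is that for this particular $\hat{B}_1$ the two nonzero columns are null and mutually orthogonal, whence the pointwise identities $\hat{B}_1\hat{B}_1^t=0$ and $\hat{B}_1^tI_{1,3}\hat{B}_1=0$ hold (these are exactly the isotropy relations exploited in the proof of Theorem~\ref{thm-iso-willmore-1}). Consequently $\eta_{-1}(z_1)\eta_{-1}(z_2)=0$ for all $z_1,z_2$, so the equation $F_-^{-1}dF_-=\lambda^{-1}\eta_{-1}\,dz$ linearizes completely and integrates at once to $F_-(z,\lambda)=I+\lambda^{-1}\int_{z_0}^z\eta_{-1}(w)\,dw$, which is obtained from $\eta_{-1}$ by replacing $f_2',f_4'$ with $f_2,f_4$. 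Thus $F_-$ is affine in $\lambda^{-1}$ with a $2$-step nilpotent off-diagonal block.

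The heart of the argument, and the main obstacle, is the Iwasawa decomposition $F_-=\tilde{F}\,\tilde{F}^+$ of Theorem~\ref{thm-DPW}(2), with $\tilde{F}\in\Lambda G_\sigma$ and $\tilde{F}^+\in\Lambda^+_S G^{\mathbb{C}}_\sigma$. I would pass first, as in the opening of this section, to the model $G(8,\mathbb{C})$ in which the image of $\eta$ is strictly upper-triangular; there the positive factor $\tilde{F}^+$ is pinned down by imposing, coefficient by coefficient, the reality condition that $\tilde{F}=F_-(\tilde{F}^+)^{-1}$ take values in the real group $SO^+(1,7)$ for $\lambda\in S^1$. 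Because $F_-$ is only affine in $\lambda^{-1}$ and the data are those of a minimal surface in $\R^4$, this is not a transcendental loop factorization but a finite, explicitly solvable Hermitian normalization: one essentially completes squares in $|f_2|^2,|f_4|^2,|f_2'|^2,|f_4'|^2$ and the cross terms $\bar{f}_2f_4f_2'/f_4'$ and their conjugates, which are precisely the quantities appearing in \eqref{eq-min-iso}. Carrying this out (deferred to the appendix) produces $\tilde{F}$ in closed form; getting the bookkeeping of this normalization right is where the real effort lies.

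Finally I would read off the surface. Since $\tilde{F}e_1=\tfrac1{\sqrt2}(Y+N)$ and $\tilde{F}e_2=\tfrac1{\sqrt2}(-Y+N)$, one has $Y_\lambda=\tfrac1{\sqrt2}\,\tilde{F}(e_1-e_2)$, so $[Y_\lambda]$ is the projective class of the difference of the first two columns of $\tilde{F}$. Substituting the closed-form $\tilde{F}$ and simplifying yields \eqref{eq-min-iso}; the vanishing of the last two entries records that the image lies in a totally geodesic $\R^4\subset S^6$, consistent with its being M\"obius equivalent to a minimal surface there. That the resulting $y$ is totally isotropic and Willmore is then immediate from Theorem~\ref{th-iso}, since $\eta$ has the form \eqref{eq-iso-np} and hence the normal bundle satisfies \eqref{eq-normal-bundle}.
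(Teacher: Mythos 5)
Your first two steps are sound and essentially reproduce the paper's setup: the type identification is indeed a column-pattern inspection, and your nilpotency observation $\eta_{-1}(z_1)\eta_{-1}(z_2)=0$ (equivalently, in the $G(8,\C)$ model, $\check{f}(z_1)\check{f}^{\sharp}(z_2)=0$, which is how the paper gets $g=0$ in Lemma \ref{lemma-iso2}) correctly gives the meromorphic frame $F_-=I+\lambda^{-1}\int_0^z\eta_{-1}\,dw$, affine in $\lambda^{-1}$. Your third step, the Iwasawa splitting, is only sketched ("complete squares", deferred to an appendix), but the idea — solve a finite algebraic system forcing the reality of $F_-(\tilde F^+)^{-1}$ — is the same as the paper's, which organizes it as the explicit system \eqref{eq-iso-mc:1} of Lemma \ref{lemma-iso3}; I will not count that against you beyond noting it is not carried out.

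The genuine gap is in your final step. You assert that the Iwasawa frame satisfies $\tilde F e_1=\tfrac{1}{\sqrt2}(Y+N)$ and $\tilde F e_2=\tfrac{1}{\sqrt2}(-Y+N)$, so that $[Y_\lambda]$ is the class of the difference of its first two columns. This is unjustified and, for this example, false. The Iwasawa decomposition produces \emph{some} $SO^+(1,7)$-lift of the harmonic conformal Gauss map, normalized by $\tilde F(0,\lambda)=I$; its first four columns $\phi_1,\dots,\phi_4$ span the Lorentzian $4$-plane $V$, but nothing makes this basis adapted to the surface in the sense of \eqref{F} — the lift of $y$ is only determined up to the $SO^+(1,3)$-gauge acting on those columns. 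One must still locate the lightlike line $[Y]$ inside $V$: take a general real lightlike section as in \eqref{eq-y} and impose that its $z$-derivative has no component in the normal bundle (the conditions \eqref{eq-iso-derivative2} of the paper's Section 4.3, equivalently $D_zY=0$), which requires first computing the Maurer--Cartan form of $\tilde F$ (Lemma \ref{lemma-iso4}) to read off $B_1=(h_1,ih_1,h_3,ih_3)$. Carrying this out (Step 3 of the paper's proof) gives $Y=\hat E_1+\bar\mu E_2+\mu\bar E_2+|\mu|^2E_1$ with $\bar\mu=-if_2'\sqrt{|d|}/f_4'\neq0$, where $\hat E_1=\phi_1+\phi_2$, $E_1=\phi_1-\phi_2$, $E_2=\phi_3-i\phi_4$. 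So $[Y]$ is \emph{not} $[\phi_1-\phi_2]$ (nor $[\phi_1+\phi_2]$): every entry of \eqref{eq-min-iso} involving $f_2'/f_4'$ or its conjugate arises precisely from this nonzero $\mu$. With your identification you would land on a different lightlike line in $V$, i.e.\ the wrong map, and you could not reach \eqref{eq-min-iso}. To close the gap you need the extraction algorithm of Section 4.3 as an explicit intermediate step between the Iwasawa decomposition and the formula for $[Y_\lambda]$.
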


\begin{corollary}  The Willmore surface $[Y_{\lambda}]$ in Theorem \ref{thm-min-iso} is conformal to the minimal surface
\begin{equation}\label{eq-min-iso2}x_{\lambda}=\left(
                                             \begin{array}{cccc}
                        -\frac{if_2'}{f_4'} +\frac{i\overline{f_2'}}{\overline{f_4'}} \\
                        -\frac{f_2'}{f_4'}-\frac{\overline{f_2'}}{\overline{f_4'}} \\
                     -i(\lambda^{-1}f_2-\lambda\bar{f}_2)+\frac{i\lambda^{-1}f_2'f_4}{f_4'}-\frac{i\lambda\overline{f_2'}\bar{f}_4}{\overline{f_4'}} \\
                     (\lambda^{-1}f_2+\lambda\bar{f}_2)-\frac{\lambda^{-1}f_2'f_4}{f_4'}-\frac{\lambda\overline{f_2'}\bar{f}_4}{\overline{f_4'}} \\
                                                              \end{array}
                                           \right) \end{equation}
in $\mathbb{R}^4$. Note that our parameter $\lambda$ is different from the usual parameter of the associated family of a minimal surface.
\end{corollary}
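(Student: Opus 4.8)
The plan is to recognize the explicit homogeneous lift in \eqref{eq-min-iso} as a constant multiple of the light-cone representative of the standard conformal embedding of $\R^4$ applied to the map $x_\lambda$ of \eqref{eq-min-iso2}. First I would observe that the last two entries of $Y_\lambda$ vanish identically. Hence $[Y_\lambda]$ takes values in the subcone cut out by $Y_6=Y_7=0$, whose projectivization is the conformal compactification $Q^4$ of $\R^4$, sitting inside $Q^6$ as a totally geodesic round $S^4\subset S^6$. Thus the surface already lies in a round $\R^4$, and it remains to read off the Euclidean representative and to verify that it is precisely $x_\lambda$.

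For the second step I would exploit the holomorphic quantities $w:=f_2'/f_4'$ and $Q:=f_2-f_2'f_4/f_4'$, in terms of which the four middle entries of $Y_\lambda$, which are exactly the components of $x_\lambda=(a_1,a_2,a_3,a_4)$, read $x_\lambda=\bigl(i(\bar w-w),\,-(w+\bar w),\,i(\lambda\bar Q-\lambda^{-1}Q),\,\lambda^{-1}Q+\lambda\bar Q\bigr)$, using $\bar\lambda=\lambda^{-1}$ on $S^1$. A short computation then gives the two clean identities
\[
a_1^2+a_2^2=4|w|^2,\qquad a_3^2+a_4^2=4|Q|^2,\qquad\text{so}\qquad |x_\lambda|^2=4|w|^2+4|Q|^2.
\]
On the other hand the $f$-dependent terms of the first two entries cancel pairwise, giving $Y^0+Y^1=2$, while the remaining terms assemble into $Y^0-Y^1=2|w|^2+2|Q|^2=\tfrac12|x_\lambda|^2$. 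Consequently $\tfrac12 Y_\lambda=\bigl(\tfrac{1+|\frac12 x_\lambda|^2}{2},\ \tfrac{1-|\frac12 x_\lambda|^2}{2},\ \tfrac12 x_\lambda,\ 0,\ 0\bigr)$, which is exactly the light-cone representative of the conformal embedding of the point $\tfrac12 x_\lambda\in\R^4$. Therefore $[Y_\lambda]$ is the image under the standard conformal embedding of the surface $\tfrac12 x_\lambda$; since the homothety $x_\lambda\mapsto\tfrac12 x_\lambda$ is a M\"obius transformation of $\R^4$, the surface $[Y_\lambda]$ is conformally equivalent to $x_\lambda$.

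Finally I would check that $x_\lambda$ is a (branched) minimal surface. Each of its four components is the sum of a holomorphic and an anti-holomorphic function of $z$ (as $w$ and $Q$ are meromorphic), hence harmonic, so $x_\lambda$ is a harmonic map into $\R^4$; and conformality follows from the one-line computation $(x_\lambda)_z=(-iw',-w',-i\lambda^{-1}Q',\lambda^{-1}Q')$, whence $\langle (x_\lambda)_z,(x_\lambda)_z\rangle=-w'^2+w'^2-\lambda^{-2}Q'^2+\lambda^{-2}Q'^2=0$. A conformal harmonic map into Euclidean space is a branched minimal immersion, which completes the argument. I expect the only real work to be the algebraic identity $Y^0-Y^1=\tfrac12|x_\lambda|^2$: matching the quadratic light-cone terms against $|x_\lambda|^2$ is exactly where the groupings $w$ and $Q$ and the cancellations among the $f_2,f_4$ terms are needed, and pinning down the overall scale $\tfrac12$ is what identifies the correct conformal embedding.
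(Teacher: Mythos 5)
Your proposal is correct, and it is essentially the argument the paper leaves implicit: the corollary is stated without proof precisely because, once \eqref{eq-min-iso} is in hand, one recognizes $Y_\lambda$ as (twice) the standard light-cone lift $u\mapsto\left(\tfrac{1+|u|^2}{2},\tfrac{1-|u|^2}{2},u,0,0\right)$ of $u=\tfrac12 x_\lambda$, using exactly your identities $Y^0+Y^1=2$ and $Y^0-Y^1=2|w|^2+2|Q|^2$ with $w=f_2'/f_4'$, $Q=f_2-f_2'f_4/f_4'$. The only ingredient you supply that the paper obtains differently is the minimality of $x_\lambda$ itself: your direct check (each component is the sum of a meromorphic and an anti-meromorphic function, hence harmonic, and $\langle (x_\lambda)_z,(x_\lambda)_z\rangle=0$, so $x_\lambda$ is a branched minimal immersion) is self-contained, whereas the paper implicitly appeals to its classification (the potential being of type (1) in Theorem 2.8 of \cite{Wang-1}, which corresponds to M\"obius transforms of minimal surfaces in $\R^4$); your route is the more elementary of the two.
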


\begin{theorem}\label{thm-example}  Let \begin{equation}\label{eq-example-np}\eta=\lambda^{-1}\left(
                      \begin{array}{cc}
                        0 & \hat{B}_1 \\
                        -\hat{B}_1^tI_{1,3} & 0 \\
                      \end{array}
                    \right)dz,\ \hbox{ with } \ \hat{B}_1=\frac{1}{2}\left(
                     \begin{array}{cccc}
                       2iz&  -2z & -i & 1 \\
                       -2iz&  2z & -i & 1 \\
                       -2 & -2i & -z & -iz  \\
                       2i & -2 & -iz & z  \\
                     \end{array}
                   \right).\end{equation}
The associated family of unbranched Willmore two-spheres $x_{\lambda}$, $\lambda\in S^1$, corresponding to $\eta$, is

\begin{equation}\label{example1}
 x_{\lambda} =\frac{1}{ \left(1+r^2+\frac{5r^4}{4}+\frac{4r^6}{9}+\frac{r^8}{36}\right)}
\left(
                          \begin{array}{c}
                            \left(1-r^2-\frac{3r^4}{4}+\frac{4r^6}{9}-\frac{r^8}{36}\right) \\
                            -i\left(z- \bar{z})(1+\frac{r^6}{9})\right) \\
                            \left(z+\bar{z})(1+\frac{r^6}{9})\right) \\
                            -i\left((\lambda^{-1}z^2-\lambda \bar{z}^2)(1-\frac{r^4}{12})\right) \\
                            \left((\lambda^{-1}z^2+\lambda \bar{z}^2)(1-\frac{r^4}{12})\right) \\
                            -i\frac{r^2}{2}(\lambda^{-1}z-\lambda \bar{z})(1+\frac{4r^2}{3}) \\
                            \frac{r^2}{2} (\lambda^{-1}z+\lambda \bar{z})(1+\frac{4r^2}{3})  \\
                          \end{array}
                        \right), \ \hbox{ with $r=|z| .$ }
\end{equation} Moreover $x_{\lambda}:S^2\rightarrow S^6$ is a Willmore immersion in $S^6$, which is full, not S-Willmore, and totally isotropic.
Note that for all $\lambda\in S^1$, $x_{\lambda}$ is isometric to each other in $S^6$.
\end{theorem}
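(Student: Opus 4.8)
The plan is to run the DPW construction of Theorem \ref{thm-DPW}(2) on the explicit potential \eqref{eq-example-np} and then to read the surface off the resulting extended frame. First I would solve the holomorphic ODE $F_-^{-1}\,dF_-=\eta$, $F_-(0,\lambda)=I$. Since $\eta=\lambda^{-1}\eta_{-1}(z)\,dz$ with $\eta_{-1}$ having entries affine in $z$ and taking values in the off-diagonal block $\mathfrak{p}^{\C}$, the solution $F_-(z,\lambda)$ is polynomial in both $z$ and $\lambda^{-1}$, computed by iterated integration. I would then perform the Iwasawa splitting $F_-=\tilde F\,\tilde F_+$ with $\tilde F\in\Lambda G_\sigma$ and $\tilde F_+\in\Lambda^+_S G^{\C}_\sigma$, so that $\tilde F(z,\bar z,\lambda)$ is the extended frame of the harmonic conformal Gauss map; the Willmore surface $x_\lambda$ is recovered from the columns of $\tilde F(\cdot,\lambda)$ via the frame structure \eqref{F}, reading the canonical lift $Y_\lambda$ off the first two rows and projecting to $S^6$.

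To make the Iwasawa step tractable I would follow the device announced at the start of Section 3: conjugate $SO^+(1,7,\C)$ isometrically into $G(8,\C)$ so that the image of $\eta$ becomes strictly upper triangular. In these coordinates $F_-$ is unipotent, and the Iwasawa factorization reduces to an explicit Gram--Schmidt-type orthonormalization of the columns with respect to the Lorentzian inner product, producing rational functions of $z$ and $\bar z$. Carrying this through yields the entries of $\tilde F$, and assembling $Y_\lambda$ gives precisely the expression \eqref{example1}, with the denominator $1+r^2+\tfrac{5r^4}{4}+\tfrac{4r^6}{9}+\tfrac{r^8}{36}$ arising as the squared norm that normalizes the light-cone lift. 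This is the heaviest computation, and it is exactly what is deferred to the appendices.

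With the closed formula in hand, the geometric assertions follow directly. Reality and membership in the light cone are immediate from the form of $Y_\lambda$. That $x_\lambda$ descends to $S^2=\C\cup\{\infty\}$ follows because numerator and denominator are polynomials in $r^2$ and in $z,\bar z$ with matching top degree, so $x_\lambda$ is single-valued with a finite limit as $z\to\infty$; the denominator is a strictly positive sum, hence never vanishes, which also shows the normalization is globally valid. For the unbranchedness claim I would compute the conformal factor $e^{2\omega}=2\langle y_z,y_{\bar z}\rangle$ from \eqref{example1} and check that it is finite and nonzero on all of $S^2$, including at $z=\infty$; a direct computation also shows $e^{2\omega}$ is independent of $\lambda$, which gives the final assertion that the members of the associated family are mutually isometric. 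Willmoreness is automatic, since $\tilde F$ frames a harmonic map into $G/K=Gr_{1,3}(\R^{8}_{1})$, i.e.\ the conformal Gauss map of $x_\lambda$ is harmonic; total isotropy together with the block-diagonal normal connection \eqref{eq-normal-bundle} then follows from Theorem \ref{th-iso}, because $\eta$ is of the form \eqref{eq-iso-np}.

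It remains to check fullness and the non-S-Willmore property. Fullness I would obtain by exhibiting that $Y_\lambda$ together with its $z$- and $\bar z$-derivatives spans all of $\R^{8}_{1}$ at a generic point, so that $x_\lambda$ lies in no proper totally geodesic subsphere. For the non-S-Willmore assertion I would compute $\kappa$ and $D_{\bar z}\kappa$ from the frame and show they are linearly independent over $\C$, equivalently that the holomorphic quartic $\Omega\,dz^4=\langle D_{\bar z}\kappa,D_z\kappa\rangle\,dz^4$ of \eqref{eq-holo-d1} is not identically zero; by the discussion preceding Theorem \ref{thm-iso-willmore-1} this is exactly the failure of $D_{\bar z}\kappa$ to be parallel to $\kappa$, i.e.\ the absence of a dual surface. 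The main obstacle throughout is the explicit global Iwasawa decomposition: one must not only integrate and factor $F_-$ in closed form, but also verify that $F_-(z,\lambda)$ stays in the big Iwasawa cell for every $z\in\C$ and that the apparent pole of $\eta$ at infinity is resolved, so that the frame, and hence the surface, extends smoothly and single-valuedly across $z=\infty$.
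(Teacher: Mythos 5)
Your overall architecture (integrate the potential, pass to $G(8,\C)$ where the potential is strictly upper triangular, factor explicitly, read off the surface, then check the geometric claims) matches the paper's, and your treatment of unbranchedness, the point $z=\infty$, and the $\lambda$-independence of the metric is in line with Steps~1--4 of the paper's proof. However, there are two genuine gaps.

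First, the surface cannot be ``read off the first two rows/columns'' of the Iwasawa frame. The extended frame $\tilde F$ produced by Theorem \ref{thm-DPW}(2) frames only the harmonic map into $SO^+(1,7)/SO^+(1,3)\times SO(4)$, i.e.\ the four-plane $V$; it is \emph{not} an adapted frame of the form \eqref{F}, since the two differ by an arbitrary $SO^+(1,3)\times SO(4)$-valued gauge mixing the first four columns. Recovering $x_\lambda$ requires finding the lightlike direction $Y$ inside ${\rm Span}_{\R}\{\phi_1,\dots,\phi_4\}$ subject to the constraint $Y_z\in {\rm Span}_{\C}\{\phi_1,\dots,\phi_4\}$; this is the ansatz \eqref{eq-y} together with the system \eqref{eq-iso-derivative2}, whose data come from the Maurer--Cartan form of $\check F$ (the paper's Lemma \ref{lemma-iso4} and Step~2). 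In the example the unique solution is $\hat\rho_1=-i\rho$ with $\rho=z(1+2r^2-\tfrac{r^6}{18})/|d|$ nonconstant, so $Y$ involves $\phi_3,\phi_4$ with $(z,\bar z)$-dependent coefficients; the naive read-off of the first two columns would produce a different, incorrect map. This step is also where the non-S-Willmore property actually comes out of the paper's argument: when the rank of $B_1$ is two, \eqref{eq-iso-derivative2} has a \emph{unique} solution, so no dual surface exists.

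Second, your proposed test for the non-S-Willmore property is based on a false equivalence. Non-parallelism of $D_{\bar z}\kappa$ and $\kappa$ is \emph{not} equivalent to $\Omega\,dz^4=\langle D_{\bar z}\kappa,D_z\kappa\rangle\,dz^4\not\equiv 0$: since $\Omega\,dz^4$ is a holomorphic quartic differential and the domain here is $S^2$, it vanishes identically for \emph{every} totally isotropic Willmore two-sphere --- indeed this vanishing is precisely what the paper uses to prove total isotropy in Theorem \ref{thm-iso-willmore-1}. So for this example you would compute $\Omega\equiv 0$ and your criterion would wrongly suggest the surface is S-Willmore. The implication only goes one way ($\Omega\neq 0$ forces non-S-Willmore, cf.\ Remark~2.5), and it is vacuous on two-spheres. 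To establish the claim you must either verify directly from the frame that $D_{\bar z}\kappa$ is not proportional to $\kappa$, or invoke the uniqueness of the solution of \eqref{eq-iso-derivative2} as above.
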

\subsection{Technical lemmas}

\subsubsection{The basic ideas} To begin with, we first explain our basic ideas, since the computations are very technical.
We will divide the proof of Theorem \ref{th-iso} into two steps:

{\em
1.  To derive the harmonic maps from the given normalized potentials;

2. To derive the geometric properties of the corresponding Willmore surfaces.}\\
The main method in Step 1 is a concrete performing of Iwasawa decompositions. The main idea in Step 2 is to read off the Maurer-Cartan forms of the  corresponding Willmore surfaces.

To finish Step 1, we first transform $SO^+(1,7,\C)$ into $G(8,\C)$ (See \eqref{eq-g}) so that the normalized potentials in Theorem \ref{th-iso} are strictly upper-triangular in $\mathfrak{g}(8,\C)=Lie(G(8,\C))$ (Lemma \ref{lemma-iso1}).
Then Lemma \ref{lemma-iso2} provides the concrete expressions of the normalized potential and its meromorphic frame. Lemma \ref{lemma-iso3} gives the  Iwasawa decomposition of the meromorphic frame by the method of undetermined coefficients. This finishes Step 1.

For Step 2, we first derive the forms of the Maurer-Cartan forms of the extended frame derived in Step 1. Then translating into the computations of moving frames, one will obtain the isotropic properties of the corresponding Willmore surfaces.

\subsubsection{Step 1: Iwasawa decompositions}
Set
\begin{equation}\label{eq-g}
G(8,\mathbb{C}):=\{A\in Mat(8,\mathbb{C})| A^tJ_8A=J_{8}, \det A=1
\},
\end{equation}
with $J_{n}=\left( j_{k,l}\right)_{n\times n}, \  j_{k,l}=\delta_{k+l,n+1} \hbox{ for all }1\leq k,l\leq n.$
\begin{lemma}\label{lemma-iso1} Let
\begin{equation}\label{eq-iso-1} \begin{array}{ccc}
                   \check{\mathcal{P}}: SO^+(1,7,\C)&  \rightarrow & G(8,\C)\\
                    A& \mapsto & \check{P}^{-1}\tilde{P}^{-1}A\tilde{P}\check{P},
                 \end{array}\ \end{equation}
with
\[\check{P}=\left(
              \begin{array}{cccc}
                  & J_2  &  &  \\
                J_2  &   &  &  \\
                  &   &  & J_2 \\
                  &   & J_2 &  \\
              \end{array}
            \right),\  \tilde{P}= \frac{1}{\sqrt{2}}\left(
    \begin{array}{cccccccc}
      1 &  &   &  &   &   &   & -1  \\
       1 &  &    &   &   &   & & 1   \\
        & -i &   &  &  &   &  i &   \\
       &  1&   &  &   &   &   1&   \\
       &  & -i  &  &     & i   &   &   \\
       &  & 1  &  &     & 1    &   &   \\
       &  &    & -i & i  &    &   &   \\
       &  &   & 1 &  1 &    &   &   \\
    \end{array}
  \right).\]
Then $\check{\mathcal{P}}$ is a Lie group isomorphism.

We also have that $\check{\mathcal{P}}(SO^+(1,7))=\left\{F\in G(8,\C)\ |\ F=\check{S}_8^{-1}\bar{F}\check{S}_8\right\},$
with \begin{equation}\check{S}_8=\bar{\check{P}}^{-1}\bar{\tilde{P}}^{-1}\tilde{P}\check{P}=
\left(
  \begin{array}{ccc}
    0 & 0 & J_2 \\
    0 & S_4 & 0 \\
    J_2 & 0 & 0 \\
  \end{array}
\right)\ ~ \hbox{ and }\ ~ S_4=\left(
                                                                          \begin{array}{cccc}
                  &   &  & 1 \\
                    & 1   &  &  \\
                  &  &1 &  \\
                              1 &   &  &  \\
                                                                          \end{array}
                                                                        \right).
\end{equation}
This induces an involution of $\Lambda G(8,\C)$
 \begin{equation}\label{eq-def-tau2} \begin{array}{ll}
\check{\tau}:    \Lambda G(8,\mathbb{C})&\rightarrow \Lambda G(8,\mathbb{C})\\
 \ \ \ \ \ \ \ F &\mapsto  \check{S}_8^{-1}\bar{F}\check{S}_8
\end{array}\end{equation}
with
$\check{P}\left(\Lambda SO^+(1,7)\right)=\{F\in\Lambda G(8,\mathbb{C})|\check{\tau}(F)=F\}$
 as its fixed point set.

  The image of the subgroup $(SO^+(1,3 )\times SO( 4))^{\C}$ is
$\check{\mathcal{P}}\left((SO^+(1,3 )\times SO( 4))^{\C}\right) =\{ \check{F}\in G(8,\mathbb{C})\ |\ \check{F}=\check{D}_0^{-1}\check{F}\check{D}_0\}$
with $\check{D}_0=\check{P}^{-1}\tilde{P}^{-1}D\tilde{P}\check{P}=-D_0=\hbox{diag}\left(1,1,-1,-1,-1,-1,1,1\right).$

 Set
\[\check{J}_8=\check{S}_8J_{8}=J_8\check{S}_8=\left(
                                                                          \begin{array}{ccc}
                              I_2    &  &  \\
                  & \check{J}_4 &   \\
                  &   &  I_2 \\
                                                                          \end{array}
                                                                        \right),\ ~ \hbox{ with }\ ~\check{J}_4=S_{4}J_4=\left(
                                                                          \begin{array}{cccc}
                              1&   &  &  \\     &    &  1&  \\
                  &  1& &  \\
                  &   &  & 1 \\
                                                                          \end{array}
                                                                        \right).\]
For any $F\in G(8,\C)$, we have
\begin{equation}\check\tau^{-1}(F)=\check{J}_8\bar{F}^t\check{J}_8.\end{equation}
\end{lemma}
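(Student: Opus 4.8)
The entire lemma is a sequence of explicit verifications organized around the single change-of-basis matrix $P:=\tilde{P}\check{P}$, since $\check{\mathcal{P}}$ is nothing but conjugation $A\mapsto P^{-1}AP$. As such it is automatically a group homomorphism and a polynomial map with polynomial inverse $B\mapsto PBP^{-1}$; the plan is therefore to reduce each assertion to a property of $P$ together with the two fixed symmetric matrices $I_{1,7}$ and $J_8$, and to confirm that conjugation carries the first quadratic form to the second.

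First I would establish that $\check{\mathcal{P}}$ maps $SO^+(1,7,\C)$ into $G(8,\C)$ by proving the key identity $P^tI_{1,7}P=J_8$. I would split this as $\tilde{P}^tI_{1,7}\tilde{P}=J_8$ together with $\check{P}^tJ_8\check{P}=J_8$. For the first, the columns of $\tilde{P}$ group into the four pairs $(c_1,c_8),(c_2,c_7),(c_3,c_6),(c_4,c_5)$, supported on disjoint coordinate blocks; a one-line computation shows that each pair is an $I_{1,7}$-hyperbolic pair (self-pairings $0$, mutual pairing $1$) with all cross-pairings vanishing, so the Gram matrix is exactly the antidiagonal $J_8$. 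The second identity is the statement $\check{P}\in G(8,\C)$, a routine block-permutation check. Granting $P^tI_{1,7}P=J_8$, any $A$ with $A^tI_{1,7}A=I_{1,7}$ yields $B=P^{-1}AP$ with $B^tJ_8B=J_8$ and $\det B=\det A=1$, hence $B\in G(8,\C)$; conversely the inverted identity $(P^{-1})^tJ_8P^{-1}=I_{1,7}$ sends $G(8,\C)$ back into $SO^+(1,7,\C)$. Bijectivity and bi-analyticity being then clear, and $SO(\cdot,\C)$ being connected so that identity components correspond, $\check{\mathcal{P}}$ is the asserted isomorphism.

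Next I would deduce the reality statements from $A=PFP^{-1}$. The matrix $A$ is real exactly when $\bar{A}=A$, i.e. when $F=(\bar{P}^{-1}P)^{-1}\bar{F}(\bar{P}^{-1}P)$; setting $\check{S}_8:=\bar{P}^{-1}P$ (which equals $\check{P}^{-1}(\bar{\tilde{P}}^{-1}\tilde{P})\check{P}$ since $\check{P}$ is real) and multiplying out gives the stated block form, which is real and satisfies $\check{S}_8^{2}=I$. Hence $\bar{\check{S}_8}\check{S}_8=I$, so $\check{\tau}(F)=\check{S}_8^{-1}\bar{F}\check{S}_8$ squares to the identity, and applying the above characterization loopwise identifies the fixed-point set $\{F:\check{\tau}(F)=F\}$ with $\check{\mathcal{P}}(\Lambda SO^+(1,7))$. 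The image of $(SO^+(1,3)\times SO(4))^{\C}$ is treated identically: $A$ commutes with $D$ iff $F$ commutes with $\check{D}_0:=P^{-1}DP$, and computing $\tilde{P}^{-1}D\tilde{P}$ (diagonal because $D$ acts as a scalar on each spanning pair of columns of $\tilde{P}$) followed by the block swaps in $\check{P}$ produces $\check{D}_0=\mathrm{diag}(1,1,-1,-1,-1,-1,1,1)$.

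Finally, $\check{J}_8=\check{S}_8J_8=J_8\check{S}_8$ is simply the claim that $\check{S}_8$ commutes with $J_8$, which I would verify blockwise, the only nontrivial block reducing to $S_4J_4=J_4S_4=\check{J}_4$. For the last identity I would exploit that $G(8,\C)$ is cut out by the real equation $X^tJ_8X=J_8$, so $\bar{F}\in G(8,\C)$ whenever $F$ is, giving $\bar{F}^{-1}=J_8\bar{F}^tJ_8$; substituting this and using $\check{J}_8=\check{S}_8J_8=J_8\check{S}_8$ yields $\check{J}_8\bar{F}^t\check{J}_8=\check{S}_8\bar{F}^{-1}\check{S}_8=(\check{S}_8^{-1}\bar{F}\check{S}_8)^{-1}=\check{\tau}(F)^{-1}$, which is the asserted formula. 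Every step is elementary matrix algebra; the only places demanding care are the bookkeeping of the direction of conjugation throughout and, in this last identity, the conversion of $\bar{F}^t$ into $J_8\bar{F}^{-1}J_8$ via the defining relation of $G(8,\C)$, which is where I expect the main (purely computational) effort to lie.
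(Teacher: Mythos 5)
Your proposal is correct and takes essentially the approach the paper intends: the paper explicitly leaves this lemma as ``straightforward matrix computations'' for the reader, and your verification --- reducing everything to the single identity $P^{t}I_{1,7}P=J_{8}$ (checked via the hyperbolic column pairs of $\tilde{P}$ and the permutation property of $\check{P}$), then deriving the real form, $\check{D}_0$, and the $\check{J}_8$ identities from it --- is exactly that computation, well organized. It is also worth noting that your reading of $\check{\tau}^{-1}(F)$ as the group inverse $\left(\check{\tau}(F)\right)^{-1}=\check{S}_8\bar{F}^{-1}\check{S}_8$ is the correct disambiguation: since $\check{\tau}$ is involutive as a map, the formula $\check{J}_8\bar{F}^{t}\check{J}_8$ could not equal the inverse map applied to $F$ for general $F\in G(8,\C)$, and your interpretation is precisely the one under which the paper uses this formula in its Appendix A computation of $\check{\tau}^{-1}(H)H$.
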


\begin{lemma}\label{lemma-iso2} Let  $\eta$ be the normalized potential of  Theorem \ref{th-iso}. Then
\[\check{\mathcal{P}}(\eta)=\lambda^{-1}\left(
        \begin{array}{ccc}
          0 & \check{f} & 0 \\
          0 & 0 & -\check{f}^{\sharp} \\
          0 & 0 & 0 \\
        \end{array}
      \right)dz,\ ~ \check{f}^{\sharp}:=J_{4}\check{f}^tJ_2,
      \]
with
\begin{equation}\label{eq-B1-to-f}
\check{f}=\left(
            \begin{array}{cccc}
             -h_{32}-i h_{42} &i(h_{12}-h_{22}) &-i (h_{12}+h_{22}) & h_{32}-i h_{42} \\
              -h_{31}-i h_{41} & i (h_{11}- h_{21})& -i(h_{11}+ h_{21}) & h_{31}-i h_{41} \\
            \end{array}
          \right).
\end{equation}
Moreover, $H=I_{8}+\lambda^{-1}H_1+\lambda^{-2}H_2$ is a solution to
\begin{equation}\label{eq-iso-ini}H^{-1}dH=\check{\mathcal{P}}(\eta),\ H|_{z=0}=I_{8}.\end{equation} Here
\[H_1=\left(
        \begin{array}{ccc}
          0 & f & 0 \\
          0 & 0 & -f^{\sharp} \\
          0 & 0 & 0 \\
        \end{array}
      \right),\ H_2=\left(
        \begin{array}{ccc}
          0 & 0 & g \\
          0 & 0 & 0 \\
          0 & 0 & 0 \\
        \end{array}
      \right),\ \hbox{ and } f= \int_0^z\check{f}dz,\ ~~ g=-\int_0^z(f\check{f}^{\sharp})dz.\]
\end{lemma}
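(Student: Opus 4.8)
The plan is to treat the two assertions separately: first I would pin down the matrix form of $\check{\mathcal{P}}(\eta)$ by a single conjugation, and then verify that the proposed $H$ solves the initial value problem by matching powers of $\lambda$.

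For the form of $\check{\mathcal{P}}(\eta)$, recall from \eqref{eq-iso-1} that $\check{\mathcal{P}}$ is the conjugation $A\mapsto Q^{-1}AQ$ with $Q:=\tilde{P}\check{P}$. Since this is multiplication by fixed matrices, it acts on the $\mathfrak{so}(1,7,\C)$-valued form $\eta$ by the very same formula, so $\check{\mathcal{P}}(\eta)=\lambda^{-1}Q^{-1}\eta_{-1}Q\,dz$, where $\eta_{-1}=\left(\begin{smallmatrix} 0 & \hat{B}_1 \\ -\hat{B}_1^tI_{1,3} & 0\end{smallmatrix}\right)$ is the coefficient matrix of \eqref{eq-iso-np}. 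First I would carry out this $8\times 8$ multiplication using the explicit $\hat{B}_1$ of \eqref{eq-iso-np}. The decisive structural feature is that the second and fourth columns of $\hat{B}_1$ are $i$ times the first and third; this is exactly matched to the $(-i,1)$-pairs appearing in $\tilde{P}$, and forces enough cancellation that the result is strictly upper triangular relative to the $2+4+2$ block decomposition. Reading off the surviving $(1,2)$-block then yields $\check{f}$ in the asserted form \eqref{eq-B1-to-f}, while the identity $\check{f}^{\sharp}=J_{4}\check{f}^tJ_2$ governing the $(2,3)$-block is forced by the requirement that $\check{\mathcal{P}}(\eta_{-1})$ lie in $\mathfrak{g}(8,\C)=\{X : X^tJ_8+J_8X=0\}$ (Lemma \ref{lemma-iso1}), i.e. by skew-symmetry relative to $J_8$.

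For the solution $H$, write $\mathcal{N}$ for the coefficient matrix, so that $\check{\mathcal{P}}(\eta)=\lambda^{-1}\mathcal{N}\,dz$, and note that $\mathcal{N}$ is strictly block-upper-triangular, hence nilpotent with $\mathcal{N}^2=\left(\begin{smallmatrix} 0&0&-\check{f}\check{f}^{\sharp}\\ 0&0&0\\ 0&0&0\end{smallmatrix}\right)$ and $\mathcal{N}^3=0$. Substituting $H=I_{8}+\lambda^{-1}H_1+\lambda^{-2}H_2$ into $dH=H\,\check{\mathcal{P}}(\eta)=\lambda^{-1}H\mathcal{N}\,dz$ and comparing coefficients of $\lambda^{-1},\lambda^{-2},\lambda^{-3}$ gives the cascade $H_1'=\mathcal{N}$, $H_2'=H_1\mathcal{N}$, together with the closure condition $H_2\mathcal{N}=0$. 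Integrating with $H_1(0)=H_2(0)=0$ produces $H_1$ with $(1,2)$-block $f=\int_0^z\check{f}\,dz$ and $(2,3)$-block $-f^{\sharp}$, using that integration in $z$ commutes with the operation $\sharp$ because $J_2,J_4$ are constant. Then $H_1\mathcal{N}$ has only a $(1,3)$-block, equal to $-f\check{f}^{\sharp}$, so $H_2$ has $(1,3)$-block $g=-\int_0^z f\check{f}^{\sharp}\,dz$, matching the stated $H_2$. The closure condition $H_2\mathcal{N}=0$ then holds automatically, since $H_2$ is supported on its $(1,3)$-block while $\mathcal{N}$ has vanishing third block-row; this is precisely what makes the degree-two truncation in $\lambda^{-1}$ consistent. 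The initial condition $H|_{z=0}=I_8$ is immediate, and membership $H\in G(8,\C)$ is automatic because $\check{\mathcal{P}}(\eta)$ is $\mathfrak{g}(8,\C)$-valued.

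The genuine obstacle is the first assertion: the conjugation by $Q$ is a sizeable and error-prone computation whose only real content is the bookkeeping that routes each $h_{ij}$ into the correct entry of $\check{f}$ in \eqref{eq-B1-to-f}, and which confirms that all lower-triangular and diagonal blocks vanish. Once strict upper-triangularity is in hand, the second assertion is essentially formal, resting only on the nilpotency $\mathcal{N}^3=0$ and the compatibility of $\sharp$ with integration.
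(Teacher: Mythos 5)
Your proposal is correct and takes essentially the same approach as the paper, which in fact records no argument at all for this lemma: it merely remarks that Lemma \ref{lemma-iso2} ``can be verified by straightforward matrix computations since the concrete formulas are provided'' and leaves them to the reader. Your $\lambda$-power cascade for the ODE part (with the cascade $H_1'=\mathcal{N}$, $H_2'=H_1\mathcal{N}$, $H_2\mathcal{N}=0$ and the compatibility of $\sharp$ with integration) is complete and correct, and your observation that $J_8$-skew-symmetry forces the $(2,3)$-block to equal $-\check{f}^{\sharp}=-J_4\check{f}^tJ_2$ once the $(1,2)$-block is known is a sound shortcut; the one step you leave unexecuted---the explicit conjugation by $\tilde{P}\check{P}$ that produces \eqref{eq-B1-to-f} and the block-triangular shape---is precisely the computation the paper also omits.
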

 \begin{lemma}\label{lemma-iso3} Retaining the assumptions and the notation of the previous lemmas, assume that $\check{\mathcal{P}}(\eta)$ is the normalized potential of some harmonic map, we obtain:

  Assume that the Iwasawa decomposition of $H$ is
\[H=\check{F}\check{F}_+,\ ~ \hbox{ with }\ ~ \check{F}\in \check{\mathcal{P}}(\Lambda SO^+(1,7)_{\sigma})\subset \Lambda G(8,\C)_{\sigma}\ ~ \hbox{ and }\ ~ \check{F}_+\in\Lambda^+ G(8,\C)_{\sigma}.\]
Then
\begin{equation}\label{eq-check-F}
    \check{F}=H\check{\tau}(W)L_0^{-1}.
\end{equation}
Here $W$, $W_0$ and $L_0$ are the solutions to the matrix equations
\[ \begin{split}
\check{\tau}(H)^{-1}H= WW_0\check{\tau}(W)^{-1}, \ W_0=\check{\tau}(L_0)^{-1}L_0,\\
 \end{split}\]
with \[W=I_8+\lambda^{-1}W_1+\lambda^{-2}W_2,\ \]
      and
      \[ W_1=\left(
        \begin{array}{ccc}
          0 & u & 0 \\
         0 & 0 & -u^{\sharp} \\
          0 &  0 & 0 \\
        \end{array}
      \right),\ W_2=\left(
        \begin{array}{ccc}
         0 & 0 & \check{g} \\
          0 & 0 & 0 \\
          0 & 0 & 0 \\
        \end{array}
      \right), W_0=\left(
        \begin{array}{ccc}
          a & 0 &  0 \\
          0 & q & 0 \\
          0 & 0 & d \\
        \end{array}
      \right),\ L_0=\left(
                   \begin{array}{ccc}
                     l_1 & 0 & 0 \\
                     0 & l_0 & 0 \\
                     0 & 0 & l_4 \\
                   \end{array}
                 \right) .\]
Here the sub-matrices $a$, $q$, $d$ and $u$ are determined by the following equations:
 \begin{subequations}  \label{eq-iso-mc:1}
\begin{align}
&d=I_2+\bar{f}^{t\sharp} \check{J}_4{f}^{ \sharp}+ \bar{g}^{t}g,        \label{eq-iso-mc:1A} \\
& u^{\sharp}d =f^{\sharp}-\check{J}_4\bar{f}^tg,      \label{eq-iso-mc:1B} \\
 &q + u^{\sharp}d\bar{u}^{\sharp t}\check{J}_4 =I_4+\check{J}_4\bar{f}^tf, \label{eq-iso-mc:1C}\\
& a+ uq\check{J}_4\bar{u}^t+ g(\bar{d}^t)^{-1}\bar{g}^t=I_2, \label{eq-iso-mc:1D}\\
& uq-g\bar{u}^{\sharp t}\check{J}_4= f. \label{eq-iso-mc:1E}
\end{align}
\end{subequations}
Moreover,  $\check{F}$ can expressed by these sub-matrices as below
{\small\begin{equation}\label{eq-iso-frame}\check{F}=H\check{\tau}(W)L_0^{-1}=\left(
        \begin{array}{ccc}
         (I-fS_0\bar{u}^{\sharp}J_2+gJ_2 \overline{gd^{-1}}J_2)l_1^{-1} & \lambda^{-1}(f+gJ_2\bar{u}S_0)l_0^{-1} & \lambda^{-2}gl_4^{-1}\\
          -\lambda (S_0\bar{u}^{\sharp}J_2+f^{\sharp}J_2\overline{gd^{-1}}J_2)l_1^{-1}  & (I-f^{\sharp}J_2\bar{u}S_0)l_0^{-1} & -\lambda^{-1}f^{\sharp}l_4^{-1} \\
          \lambda^{2}J_2\overline{gd^{-1}}J_2l_1^{-1}  & \lambda J_2 \bar{u}S_0l_0^{-1} & l_4^{-1} \\
        \end{array}
      \right).
\end{equation}}
 \end{lemma}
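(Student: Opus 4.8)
The plan is to characterize the Iwasawa factor $\check{F}$ abstractly and then reduce the lemma to a Birkhoff-type matrix factorization that can be solved in closed form. By Theorem \ref{thm-iwasawa} the decomposition $H=\check{F}\check{F}_+$ exists on a dense open set, so it suffices to exhibit a solution of the prescribed shape and verify it. The factor $\check{F}$ is pinned down by two properties: the reality $\check{\tau}(\check{F})=\check{F}$, which places $\check{F}$ in the real twisted loop group $\check{\mathcal{P}}(\Lambda SO^+(1,7)_\sigma)$, and $\check{F}_+:=\check{F}^{-1}H\in\Lambda^+ G(8,\C)_\sigma$. First I would make the ansatz \eqref{eq-check-F}, namely $\check{F}=H\check{\tau}(W)L_0^{-1}$ with $W=I_8+\lambda^{-1}W_1+\lambda^{-2}W_2$ having only non-positive powers of $\lambda$ and value $I_8$ at $\lambda=\infty$, with $L_0$ independent of $\lambda$, and set $W_0=\check{\tau}(L_0)^{-1}L_0$. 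Since $\check{\tau}$ sends $\lambda^{-k}\mapsto\lambda^{k}$, the loop $\check{\tau}(W)$ contains only the powers $\lambda^0,\lambda^1,\lambda^2$ and satisfies $\check{\tau}(W)|_{\lambda=0}=I_8$, so the candidate $\check{F}_+=L_0\check{\tau}(W)^{-1}$ automatically lies in $\Lambda^+ G(8,\C)_\sigma$ with $\check{F}_+|_{\lambda=0}=L_0$; choosing $L_0$ in the solvable subgroup handles the normalization. A short manipulation using $\check{\tau}^2=\mathrm{id}$ then shows the reality condition $\check{\tau}(\check{F})=\check{F}$ is \emph{equivalent} to the factorization identity $\check{\tau}(H)^{-1}H=WW_0\check{\tau}(W)^{-1}$. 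This reduces the entire lemma to solving that identity together with the finite factorization $W_0=\check{\tau}(L_0)^{-1}L_0$.

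Second, I would solve the loop factorization explicitly. By Lemma \ref{lemma-iso2} the quantity $M:=\check{\tau}(H)^{-1}H$ is a Laurent polynomial in $\lambda$ of degrees $-2$ through $2$, whose coefficients are built from $f$, $g$ and their $\check{\tau}$-images (recall $\check{\tau}$ conjugates the entries and reflects the $\lambda$-grading). Rewriting the factorization as $W^{-1}M\,\check{\tau}(W)=W_0$ and demanding that the left-hand side be independent of $\lambda$ forces the $\lambda^{\pm1}$ and $\lambda^{\pm2}$ coefficients to vanish; reading these off in the $2,4,2$ block grading produces the system \eqref{eq-iso-mc:1}. The system is triangular and hence solvable in order: \eqref{eq-iso-mc:1A} gives the bottom block $d$ as the manifestly positive combination $I_2+\bar{f}^{t\sharp}\check{J}_4 f^{\sharp}+\bar{g}^t g$, then \eqref{eq-iso-mc:1B} gives $u^{\sharp}$ (hence $u$, via $u^{\sharp}=J_4 u^t J_2$), \eqref{eq-iso-mc:1C} gives $q$, \eqref{eq-iso-mc:1D} gives $a$, and \eqref{eq-iso-mc:1E} is the closure relation tying $u$, $q$, $g$ back to $f$. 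Because $W_0=\mathrm{diag}(a,q,d)$ emerges block-diagonal, the remaining finite factorization $W_0=\check{\tau}(L_0)^{-1}L_0$ splits into three independent blockwise Cholesky/Iwasawa problems yielding $L_0=\mathrm{diag}(l_1,l_0,l_4)$, consistent with the stated forms of $W_0$ and $L_0$.

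Third, I would assemble $\check{F}$. Substituting the explicit $H$ of Lemma \ref{lemma-iso2}, the computed $\check{\tau}(W)$ (powers $\lambda^0,\lambda^1,\lambda^2$), and $L_0^{-1}=\mathrm{diag}(l_1^{-1},l_0^{-1},l_4^{-1})$ into $\check{F}=H\check{\tau}(W)L_0^{-1}$, then multiplying the $3\times3$ block matrices and repeatedly simplifying with \eqref{eq-iso-mc:1}, produces entry by entry the matrix \eqref{eq-iso-frame}; here the constant block $S_0$ is precisely the twist introduced by $\check{\tau}$ on the off-diagonal blocks, and each displayed entry is the single $\lambda$-power that survives in the corresponding block product. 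One should finish by confirming that the assembled $\check{F}$ satisfies $\check{\tau}(\check{F})=\check{F}$ and that $\check{F}_+=L_0\check{\tau}(W)^{-1}$ has the correct value in $S$ at $\lambda=0$, so that \eqref{eq-iso-frame} is indeed the Iwasawa factor and not merely a formal solution.

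The conceptual skeleton above is routine loop-group algebra; the genuine obstacle is the bookkeeping. One must keep the three operations mutually consistent: the $\sharp$-operation $\check{f}^{\sharp}=J_4\check{f}^t J_2$ that encodes membership in $G(8,\C)$, the antiholomorphic involution $\check{\tau}(F)=\check{S}_8^{-1}\bar{F}\check{S}_8$ with its $\lambda$-reflection, and the $2,4,2$ block products. The crucial verification is that the $\lambda^{\pm1},\lambda^{\pm2}$ coefficients of $W^{-1}M\check{\tau}(W)$ really do cancel, so that $W_0$ is block-diagonal and \eqref{eq-iso-mc:1} closes; this is where essentially all the computational effort lies, and it is the part I would carry out most carefully (and relegate to the appendix, as the paper does).
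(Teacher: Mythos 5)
Your proposal is correct and takes essentially the same route as the paper: both reduce the problem to solving $\check{\tau}(H)^{-1}H = WW_0\check{\tau}(W)^{-1}$ by comparing $\lambda$-coefficients in the $2,4,2$ block grading (yielding the system \eqref{eq-iso-mc:1}), and then assemble $\check{F}=H\check{\tau}(W)L_0^{-1}$ by explicit block multiplication. The only organizational difference is that the paper begins with the general twisted ansatz (allowing extra blocks $v$ in $W_1$ and $b,c$ in $W_0$) and proves these vanish, while you posit the restricted form from the outset and appeal to uniqueness of the Iwasawa decomposition; both are legitimate.
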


 \begin{remark}\
 1.
Since in Lemma \ref{lemma-iso2} the matrices $f$ and $g$, whence also $f^{\sharp}$, are given, equation \eqref{eq-iso-mc:1A} determines $d$, where $d$ is invertible (certainly true fore small $z$ close to $z=0$). Then equation \eqref{eq-iso-mc:1B} determines $u^{\sharp}$, hence $u$. Inserting this into \eqref{eq-iso-mc:1C} results in determining $q$. Inserting what we have so far into \eqref{eq-iso-mc:1D} determines $a$. The last equation, \eqref{eq-iso-mc:1E}, is a consequence of the previous equations. Therefore, the only condition for the solvability of the system of equations is the invertibility of $d$.

 2. If $f$ and $g$ are rational functions of $z$, the invertibility of $d$ is satisfied locally, whence on an open dense subset as a rational expression in $z,\bar{z}$.
\end{remark}

\begin{remark}  1. For a general procedure for the computation of Iwasawa decompositions for algebraic loops, or more generally for rational loops, see $\S I.2$ of \cite{CG}, where an algebraic constructive method is presented to carry out the Birkhoff decomposition for such loop matrices.

  2. In \cite{FSW}, \cite{Co-Pa}, a different method is used to produce all harmonic maps of finite uniton type into $U(n)$, the complex Grassmannian $U(n+m)/(U(n)\times U(m))$, and $G_2$. The treatment of these papers basically follows the spirit of Wood \cite{Wood}, Uhlenbeck \cite{Uh} and Segal \cite{Segal}, using some special unitons. In \cite{S-W}, the converse part of this procedure is also used  for the computation of  the Iwasawa decomposition of elements of the algebraic loop group $\lambda_{alg}U(n)^{\C}$.  Here, we use the DPW methods \cite{DPW} to obtain  harmonic maps of finite uniton type as well as the corresponding Willmore surfaces. The complexified Lie group we use here is in fact isomorphic to $SO(2m,\C)$.  Moreover, due to the complexity of the expressions of Willmore 2-spheres and the corresponding harmonic map, there are in principle no a really very easy way to derive new (not S-Willmore) Willmore 2-spheres, in particular when we not only need to derive the concrete harmonic maps but also need to obtain explicitly the corresponding Willmore surfaces.
\end{remark}

\subsubsection{Step 2: Maurer-Cartan forms}

\begin{lemma}\label{lemma-iso4} Retaining the assumptions and the notation of the previous lemmas, the Maurer-Cartan form  of  $\check{F}$ in \eqref{eq-iso-frame} is of the form
 \begin{equation}\label{eq-iso-m-c1}\check{\alpha}_{\mathfrak{k}}'=\left(
                                \begin{array}{ccc}
                                  a_1 & 0 & 0 \\
                                  0 & a_0 & 0\\
                                  0 & 0 & a_4 \\
                                \end{array}
                              \right)dz,\ ~ \hbox{ and }\ ~\check{\alpha}_{\mathfrak{p}}'=\lambda^{-1}\left(
                                \begin{array}{ccc}
                                  0 & l_1\check{f}l_0^{-1} & 0 \\
                                  0 & 0 & -(l_1\check{f}l_0^{-1})^{\sharp}\\
                                  0 & 0 & 0 \\
                                \end{array}
                              \right)dz,\end{equation}
with
\begin{equation}\label{eq-iso-m-c2}
 \left\{\begin{split}
 a_1&=-l_1\check{f}S_4\bar{u}^{\sharp}J_2l_1^{-1}-l_{1z}l_1^{-1},\\ a_0&=-l_0(\check{f}^{\sharp}J_2\bar{u}S_4-S_4\bar{u}^{\sharp}J_2\check{f})l_0^{-1}-l_{0z}l_0^{-1},\\ a_{4}&=l_4J_2\bar{u}S_4\check{f}^{\sharp}l_4^{-1}-l_{4z}l_4^{-1}.
 \end{split}\right.\end{equation}
 \end{lemma}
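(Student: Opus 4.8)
Proof proposal for Lemma \ref{lemma-iso4}.

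The plan is to avoid expanding the explicit frame \eqref{eq-iso-frame} entry by entry, and instead to read off $\check\alpha=\check{F}^{-1}d\check{F}$ directly from the Iwasawa factorization. Write $N$ for the $\lambda$-independent matrix with $H^{-1}dH=\lambda^{-1}N\,dz$, i.e. the strictly upper block-triangular matrix of Lemma \ref{lemma-iso2} with blocks $\check f$ and $-\check f^{\sharp}$. From Lemma \ref{lemma-iso3} one has $H=\check F\check F_+$, hence $\check F_+=\check F^{-1}H=L_0\check\tau(W)^{-1}\in\Lambda^+G(8,\C)_\sigma$. I first record the identity
\[
\check\alpha=\check F^{-1}d\check F=\check F_+\,(H^{-1}dH)\,\check F_+^{-1}-(d\check F_+)\check F_+^{-1}=\lambda^{-1}\check F_+N\check F_+^{-1}\,dz-(d\check F_+)\check F_+^{-1}.
\]
Since $\check F_+$ and $\check F_+^{-1}$ involve only non-negative powers of $\lambda$ with constant term $L_0$ (respectively $L_0^{-1}$), the term $(d\check F_+)\check F_+^{-1}$ contributes nothing to the $\lambda^{-1}$-coefficient, and its $\lambda^{0}$-contribution is $(dL_0)L_0^{-1}$, whose $(1,0)$-part is the block-diagonal $L_{0,z}L_0^{-1}\,dz$. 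Everything then reduces to extracting the $\lambda^{-1}$ and $\lambda^{0}$ coefficients of $\lambda^{-1}\check F_+N\check F_+^{-1}$.

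For $\check\alpha_{\mathfrak p}'$, expand $\check F_+=L_0(I+\lambda V_1+\cdots)$, where $V_1$ is the $\lambda$-coefficient of $\check\tau(W)^{-1}$. The $\lambda^{0}$-term of $\check F_+N\check F_+^{-1}$ is $L_0NL_0^{-1}$, so $\check\alpha_{\mathfrak p}'=\lambda^{-1}L_0NL_0^{-1}\,dz$. As $L_0=\mathrm{diag}(l_1,l_0,l_4)$ and $N$ is block super-diagonal, this already yields the $(1,2)$-block $l_1\check f l_0^{-1}$ and the $(2,3)$-block $-l_0\check f^{\sharp} l_4^{-1}$. To match the stated form I would then invoke $L_0\in G(8,\C)$, i.e. $L_0^tJ_8L_0=J_8$ with $J_8=\bigl(\begin{smallmatrix}0&0&J_2\\0&J_4&0\\J_2&0&0\end{smallmatrix}\bigr)$, which forces $l_4^{-1}=J_2l_1^tJ_2$ and $(l_0^{-1})^t=J_4l_0J_4$; a short manipulation then gives $l_0\check f^{\sharp}l_4^{-1}=l_0J_4\check f^tl_1^tJ_2=(l_1\check f l_0^{-1})^{\sharp}$, which is exactly \eqref{eq-iso-m-c1}.

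For $\check\alpha_{\mathfrak k}'$, I need the $\lambda^{1}$-coefficient of $\check F_+N\check F_+^{-1}$, which equals $L_0[V_1,N]L_0^{-1}$. Since on $S^1$ one has $\check\tau(W)=I+\lambda\,\check S_8^{-1}\bar W_1\check S_8+\lambda^2(\cdots)$, it follows that $V_1=-\check S_8^{-1}\bar W_1\check S_8$; because $\check S_8$ interchanges the first and third blocks while $W_1$ is block super-diagonal with entries $u,-u^{\sharp}$, the matrix $V_1$ is block sub-diagonal with blocks $S_4\bar u^{\sharp}J_2$ and $-J_2\bar u S_4$. The key point is that the commutator of the sub-diagonal $V_1$ with the super-diagonal $N$ is block-diagonal: both $V_1N$ and $NV_1$ come out block-diagonal, so all off-diagonal terms cancel automatically. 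Reading off the three diagonal blocks of $L_0[V_1,N]L_0^{-1}-L_{0,z}L_0^{-1}$ and subtracting $L_{0,z}L_0^{-1}$ then produces precisely $a_1,a_0,a_4$ of \eqref{eq-iso-m-c2}.

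The main obstacle is this last commutator computation together with the bookkeeping of the operations $S_4$, $J_2$, and $\sharp$: one must verify that $\check S_8^{-1}\bar W_1\check S_8$ has exactly the claimed off-diagonal blocks and that the surviving diagonal entries of $[V_1,N]$ are $-\check f S_4\bar u^{\sharp}J_2$, then $S_4\bar u^{\sharp}J_2\check f-\check f^{\sharp}J_2\bar u S_4$, and finally $J_2\bar u S_4\check f^{\sharp}$, with no stray terms. This is where the compatibility of the $\sharp$-operation with the $2$-$4$-$2$ block decomposition and with the reality structure $\check\tau$ must be handled with care; once this is checked, conjugation by $L_0$ and subtraction of $L_{0,z}L_0^{-1}$ give the stated formulas with no further input.
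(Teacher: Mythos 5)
Your proposal is correct and takes essentially the same route as the paper's proof: both differentiate the factorization $\check F=H\check\tau(W)L_0^{-1}$ (you phrase it via the gauge identity with $\check F_+=L_0\check\tau(W)^{-1}$), identify $\check\alpha_{\mathfrak p}'=\lambda^{-1}L_0\check{\mathcal P}(\eta_{-1})L_0^{-1}dz$ and $\check\alpha_{\mathfrak k}'=L_0[\check{\mathcal P}(\eta_{-1}),\check\tau(W_1)]L_0^{-1}dz+L_0(L_0^{-1})_z dz$ (your $[V_1,N]$ equals $[N,\check\tau(W_1)]$ and your $-(dL_0)L_0^{-1}$ equals $L_0(L_0^{-1})_z$), and evaluate exactly the same block commutator to get $a_1,a_0,a_4$. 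Your explicit verification that $l_0\check f^{\sharp}l_4^{-1}=(l_1\check f l_0^{-1})^{\sharp}$ using $L_0^tJ_8L_0=J_8$ is a worthwhile detail that the paper leaves implicit, but it does not constitute a different method.
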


Note that these three equations for $a_1$ $a_0$ and $a_4$ actually should  be read as ordinary differential equations for $l_1$, $l_0$ and $l_4$. As initial conditions we may use $l_j(0)=I, j=0,1,4.$

\begin{lemma}\label{lemma-iso5} Let $\mathcal{F}:M\rightarrow SO^+(1,7)/SO^+(1,3)\times SO(4)$ be the conformal Gauss map of a Willmore surface $y$, with an extended frame $F$. If the  Maurer-Cartan form of $\check{F}=\check{\mathcal{P}}(F)$ has the form
\eqref{eq-iso-m-c1}, then $y$ is totally isotropic in $S^{6}$. Moreover, locally there exists an isotropic frame $\{E_1, E_2\}$ of the normal bundle $V^{\perp}_{\C}$ of $y$ such that  \eqref{eq-normal-bundle} holds.\end{lemma}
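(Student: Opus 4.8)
The plan is to transport the Maurer--Cartan form \eqref{eq-iso-m-c1} back to the frame $F$ of the conformal Gauss map, read off the second fundamental form data of $y$, check that they satisfy \eqref{eq-normal-bundle}, and finally deduce total isotropy by a direct induction on the $z$-derivatives of the canonical lift. Since $\check{\mathcal P}$ is conjugation by $\tilde P\check P$ (Lemma~\ref{lemma-iso1}) and intertwines the involution $\sigma$ (because $\check D_0=\check{\mathcal P}(D)$ up to sign), the Maurer--Cartan form of $F=\check{\mathcal P}^{-1}(\check F)$ is $\alpha=(\tilde P\check P)\,\check\alpha\,(\tilde P\check P)^{-1}$ evaluated at $\lambda=1$, and $\check{\mathcal P}$ respects the splitting $\mathfrak g=\mathfrak k\oplus\mathfrak p$. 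Once \eqref{eq-normal-bundle} is in hand the conclusion is frame-independent, so it suffices to exhibit one isotropic frame with the stated properties.

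First I would treat the $\mathfrak p$-component. The off-diagonal part $\check\alpha_{\mathfrak p}'$ in \eqref{eq-iso-m-c1} is \emph{strictly upper triangular} in the $2+4+2$ block splitting, consisting of a single $2\times4$ block $\hat f:=l_1\check f l_0^{-1}$ in position $(1,2)$ together with its partner $-\hat f^{\sharp}$ in position $(2,3)$; this is exactly the shape of $\check{\mathcal P}(\eta)$ in Lemma~\ref{lemma-iso2}. Running the dictionary \eqref{eq-B1-to-f} backwards, the corresponding $B_1$ in the original frame is forced into the isotropic normal form \eqref{eq-iso-np}, whose columns pair as $(c,ic)$. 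Setting $E_1=\psi_1+i\psi_2$ and $E_2=\psi_3+i\psi_4$ and comparing with \eqref{eq-b1}, this says precisely that $\kappa$ and $D_{\bar z}\kappa$ lie in $W:=\hbox{Span}_{\C}\{E_1,E_2\}$, while orthonormality of $\{\psi_j\}$ gives $\langle E_i,E_j\rangle=0$ and $\langle E_i,\bar E_j\rangle=\delta_{ij}$. For the $\mathfrak k$-component, I would use the explicit expressions \eqref{eq-iso-m-c2} for $a_1,a_0,a_4$ together with the characterization of $\mathfrak{k}_2^{\C}$ in Lemma~\ref{lemma-ode} to verify that the $\mathfrak{so}(4)$-part of $\alpha$, i.e. the normal connection $A_2$, takes values in $\mathfrak{k}_2^{\C}$; equivalently, the connection is block-diagonal in $\{E_1,E_2,\bar E_1,\bar E_2\}$, so that $D_zE_i,\ D_{\bar z}E_i\in W$. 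Since $W$ is then invariant under the normal connection and $\kappa\in W$, differentiating $\kappa=\sum_i c_iE_i$ gives $D_z\kappa\in W$ as well, and \eqref{eq-normal-bundle} holds.

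The passage from \eqref{eq-normal-bundle} to total isotropy is the clean conceptual core. The crucial observation is that for \emph{any} normal section $\psi\in W$ the last line of \eqref{eq-moving} collapses to $\psi_z=D_z\psi\in W$, because the coefficients $2\langle\psi,D_{\bar z}\kappa\rangle$ and $-2\langle\psi,\kappa\rangle$ of $Y$ and $Y_{\bar z}$ vanish by the isotropy of $W$, and $D_z\psi\in W$ by the previous step. Combining this with $Y_{zz}=-\tfrac s2Y+\kappa$, one proves by induction that
\[
Y_z^{(m)}=P_m+w_m,\qquad P_m\in\hbox{Span}_{\C}\{Y,Y_z\},\quad w_m\in W,\quad m\ge1,
\]
the inductive step being $\partial_zP_m\in\hbox{Span}_{\C}\{Y,Y_z\}+\C\kappa$ and $\partial_zw_m=D_zw_m\in W$. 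Since $\hbox{Span}_{\C}\{Y,Y_z\}\subset V_{\C}$ is isotropic, $W\subset V_{\C}^{\perp}$ is isotropic, and the two are mutually orthogonal, all cross terms drop out and
\[
\langle Y_z^{(m)},Y_z^{(n)}\rangle=\langle P_m,P_n\rangle+\langle w_m,w_n\rangle=0
\]
for all $m,n$, which is exactly total isotropy.

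The $\mathfrak p$-component bookkeeping and the induction above are routine. I expect the main obstacle to be the $\mathfrak k$-component step: verifying from the explicit formulas \eqref{eq-iso-m-c2}, after conjugation by $\tilde P\check P$, that the normal connection $A_2$ lands in the proper subalgebra $\mathfrak{k}_2^{\C}$ of Lemma~\ref{lemma-ode} — i.e. that it is genuinely block-diagonal with respect to $\{E_1,E_2,\bar E_1,\bar E_2\}$ rather than an arbitrary $J_4$-skew element of $\mathfrak{so}(4,\C)$ — since the mere block-diagonal shape of $\check\alpha_{\mathfrak k}'$ in the coarser $2+4+2$ splitting does not by itself force this finer decomposition.
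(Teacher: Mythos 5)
Your proposal is correct and is essentially the paper's own argument: transform \eqref{eq-iso-m-c1} back through $\check{\mathcal{P}}^{-1}$, read off that $B_1$ has the paired-column shape $(h_1,ih_1,h_3,ih_3)$ so that $\kappa$ and $D_{\bar z}\kappa$ lie in the isotropic plane $W=\hbox{Span}_{\C}\{\psi_1+i\psi_2,\psi_3+i\psi_4\}$, check that the normal connection preserves $W$, and conclude total isotropy. Your closing induction is a welcome expansion of what the paper compresses into the single assertion that it suffices to verify that $D_z\kappa$ is isotropic; the paper's justification is exactly the $D_z$-invariance of $W$ that you isolate.

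The only place you go astray is the final worry, and it is in fact not an obstacle. Under $\check{\mathcal{P}}$ the normal algebra $\mathfrak{so}(4,\C)$ is \emph{not} carried to the middle $4\times4$ block: it is carried to the positions indexed by $\{1,2,7,8\}$, i.e.\ to the two diagonal corner blocks $a_1,a_4$ together with the two off-diagonal corner blocks in positions $\{1,2\}\times\{7,8\}$ and $\{7,8\}\times\{1,2\}$, while the middle block $a_0$ carries $\mathfrak{so}(1,3,\C)$. Counting dimensions, the diagonal corner blocks contribute only $4$ of the $6$ dimensions of $\mathfrak{so}(4,\C)$ (recall $a_4$ is determined by $a_1$ through the $J_8$-skewness), and the two missing dimensions are exactly the off-diagonal corner blocks, which are absent in \eqref{eq-iso-m-c1}. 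Hence the coarse $2+4+2$ block-diagonal shape \emph{does} force $A_2$ into the $4$-dimensional subalgebra $\mathfrak{k}_2^{\C}$ of Lemma \ref{lemma-ode}; this is precisely what the paper verifies when it computes $A_2$ from the entries $\check{c}_{ij}$ and derives
\begin{equation*}
(\psi_1+i\psi_2)_z\equiv \check{c}_{22}(\psi_1+i\psi_2)+\check{c}_{12}(\psi_3+i\psi_4),\qquad
(\psi_3+i\psi_4)_z\equiv \check{c}_{21}(\psi_1+i\psi_2)+\check{c}_{11}(\psi_3+i\psi_4)
\end{equation*}
modulo $\{\phi_1,\phi_2,\phi_3,\phi_4\}$, which is the statement $D_zE_i\in W$ that your argument needs.
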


Lemma \ref{lemma-iso1}, Lemma \ref{lemma-iso2} can be verified by straightforward matrix computations since the concrete formulas are provided (Compare also \cite{Wang-1}). So we leave these computations  to the  readers.  The proofs of the other lemmas will be contained in the following section.

\section{Appendix A: Iwasawa decompositions}
\subsection{Proof of Lemma \ref{lemma-iso3}}
  First  one computes
 \begin{equation*}\begin{split}\check{\tau}^{-1}(H)H&=\left(
        \begin{array}{ccc}
          I_2 & 0 & 0 \\
          \lambda\check{J}_4\bar{f}^t & I_4 & 0 \\
          \lambda^2 \bar{g}^t & -\lambda\bar{f}^{\sharp,t}\check{J}_4 & I_2 \\
        \end{array}\right)\left(
        \begin{array}{ccc}
          I_2 & \lambda^{-1}f & \lambda^{-2}g \\
          0 & I_4 & -\lambda^{-1}f^{\sharp} \\
          0 & 0 & I_2 \\
        \end{array}
      \right)\\
      &=\left(
        \begin{array}{ccc}
          I_2 & \lambda^{-1}f & \lambda^{-2}g \\
        \lambda\check{J}_4\bar{f}^t   & I_4+\check{J}_4\bar{f}^tf & \lambda^{-1}\check{J}_4\bar{f}^tg-\lambda^{-1}f^{\sharp} \\
         \lambda^2 \bar{g}^t & \lambda\bar{g}^t f-\lambda\bar{f}^{\sharp,t}\check{J}_4 & I_2+\lambda\bar{f}^{\sharp,t}\check{J}_4f^{\sharp}+\bar{g}^tg \\
        \end{array}
      \right).\\
      \end{split}
      \end{equation*}
We write $\check\tau^{-1}(H)H=WW_0\check\tau^{-1}(W)$ with
\[ W=I_8+\lambda^{-1}W_1+\lambda^{-2}W_2\]
and
\[ W_1=\left(
        \begin{array}{ccc}
          0 & u & 0 \\
         -v^{\sharp} & 0 & -u^{\sharp} \\
          0 &  v & 0 \\
        \end{array}
      \right),\ W_0=\left(
        \begin{array}{ccc}
          a & 0 & b \\
          0 & q & 0 \\
          c & 0 & d \\
        \end{array}
      \right),\ W_0^{-1}=\left(
        \begin{array}{ccc}
          \hat{a} & 0 & \hat{b} \\
          0 & q^{-1} & 0 \\
          \hat{c} & 0 & \hat{d} \\
        \end{array}
      \right).\]
Hence we obtain
\begin{equation}\label{eq-iso-iwa}\left\{
\begin{split}
&W_2W_0=H_2,\\
&W_1W_0+W_2W_0\check{J}_8\bar{W}_1^t\check{J}_8=H_1+\check{J}_8\bar{H}_1^t\check{J}_8H_2,\\
&W_0+W_1W_0\check{J}_8\bar{W}_1^t\check{J}_8+W_2W_0\check{J}_8\bar{W}_2^t\check{J}_8=I+\check{J}_8\bar{H}_1^t\check{J}_8H_1+\check{J}_8\bar{H}_2^t\check{J}_8H_2.
\end{split}
\right.
\end{equation}
Since
\[\check{J}_8\bar{W}_1^t\check{J}_8=\left(
        \begin{array}{ccc}
          0 & -\bar{v}^{\sharp t}\check{J}_4 & 0 \\
         \check{J}_4\bar{u}^t & 0 &  \check{J}_4\bar{v}^t\\
         0& -\bar{u}^{\sharp t}\check{J}_4 &  0\\
        \end{array}
      \right),\ W_1W_0=\left(
        \begin{array}{ccc}
          0 & uq & 0 \\
       -v^{\sharp}a-u^{\sharp}c& 0 & -v^{\sharp}b-u^{\sharp}d \\
          0 &vq & 0 \\
        \end{array}
      \right),\]and      \[W_2W_0\check{J}_8\bar{W}_1^t\check{J}_8=H_2\check{J}_8\bar{W}_1^t\check{J}_8=\left(
        \begin{array}{ccc}
          0 & -g\bar{u}^{\sharp t}\check{J}_4 & 0 \\
         0& 0 &  0\\
         0& 0 &  0\\
        \end{array}
      \right),\]
from the second matrix equation of \eqref{eq-iso-iwa}, one derives easily that
\[vq=0,\ -v^{\sharp}a-u^{\sharp}c=0,\ uq-g\bar{u}^{\sharp t}\check{J}_4= f,\
 -v^{\sharp}b-u^{\sharp}d =\check{J}_4\bar{f}^tg-f^{\sharp}.\]
Since $q$ is invertible, $v=0$. Therefore we have
\begin{equation*}
v=0,\ u^{\sharp}c=0,\  uq-g\bar{u}^{\sharp t}\check{J}_4= f,\
 u^{\sharp}d =f^{\sharp}-\check{J}_4\bar{f}^tg.
\end{equation*}
Next we consider the third matrix equation in \eqref{eq-iso-iwa}. Since
\begin{equation*}
  W_1W_0\check{J}_8\bar{W}_1^t\check{J}_8 =\left(
        \begin{array}{ccc}
          \cdots& 0  & 0 \\
         0& \cdots & 0  \\
          0& 0 & 0\\
        \end{array}
      \right) ~\hbox{ and }~
       W_2W_0\l\check{J}_8\bar{W}_2^t\check{J}_8 =H_2\check{J}_8\bar{W}_2^t\check{J}_8=\left(
        \begin{array}{ccc}
          \cdots& 0  & 0 \\
         0& 0 & 0  \\
          0& 0 & 0 \\
        \end{array}
      \right),
   \end{equation*}
comparing with the $\lambda-$independent part of $\check\tau^{-1}(H)H$,
we derive directly that $c=b=\hat{c}=\hat{b}=0.$
Substituting these results into the matrix equations in \eqref{eq-iso-iwa}, a straightforward computation yields  \eqref{eq-iso-mc:1}.

In the end, let $L_0$ be of the form as in Lemma \ref{lemma-iso3}, it is easy to compute
\begin{equation*}
\begin{split}\check{F}&=H\check{\tau}(W)L_0^{-1}\\
&=\left(
        \begin{array}{ccc}
          I & \lambda^{-1}f & \lambda^{-2}g \\
          0 & I & -\lambda^{-1}f^{\sharp} \\
          0 & 0 & I \\
        \end{array}
      \right)\left(
        \begin{array}{ccc}
          I &  &  \\
        -\lambda S_4\bar{u}^{\sharp}J_2 & I & \\
          \lambda^2 J_2\bar{gd^{-1}}J_2 & \lambda J_2\bar{u}S_4 & I \\
        \end{array}
      \right)L_0^{-1}\\
      &=\left(
        \begin{array}{ccc}
         (I-fS_4\bar{u}^{\sharp}J_2+gJ_2 \overline{gd^{-1}}J_2)l_1^{-1} & \lambda^{-1}(f+gJ_2\bar{u}S_4)l_0^{-1} & \lambda^{-2}gl_4^{-1}\\
          -\lambda (S_4\bar{u}^{\sharp}J_2+f^{\sharp}J_2\overline{gd^{-1}}J_2)l_1^{-1}  & (I-f^{\sharp}J_2\bar{u}S_4)l_0^{-1} & -\lambda^{-1}f^{\sharp}l_4^{-1} \\
          \lambda^{2}J_2\overline{gd^{-1}}J_2l_1^{-1}  & \lambda J_2 \bar{u}S_4l_0^{-1} & l_4^{-1} \\
        \end{array}
      \right).\end{split}\end{equation*}
\hfill$\Box $

\subsection{The Maurer-Cartan form of $\check{F}$ and the geometry of Willmore surfaces}\ \\
{\em Proof of Lemma \ref{lemma-iso4}:} We have
\[\check{F}^{-1}d\check{F}=\lambda^{-1} \check\alpha_{\mathfrak{p}}'+
\check{\alpha}_{\mathfrak{k}}+\lambda\check{\alpha}_{\mathfrak{p}}'',\] with
$\check\alpha_{\mathfrak{p}}'=L_0\check{\mathcal{P}}\left(\eta_{-1}\right)L_0^{-1}dz,\ \check\alpha_{\mathfrak{k}}'=L_0[\check{\mathcal{P}}\left(\eta_{-1}\right),\check\tau(W_1)]L_0^{-1}dz+L_0(L_0^{-1})_zdz.$
 Since \[\check{\mathcal{P}}\left(\eta_{-1}\right)=\left(
                                \begin{array}{ccc}
                                  0 & \check{f}  & 0 \\
                                  0 & 0 & -\check{f}^{\sharp} \\
                                  0 & 0 & 0 \\
                                \end{array}
                              \right) \ \hbox{ and }\ \check\tau(W_1)=\left(
                         \begin{array}{ccc}
                            0& 0 &0  \\
                           -S_4\bar{u}^{\sharp}J_2 & 0 &0  \\
                           0 & J_2\bar{u}S_4 & 0 \\
                         \end{array}
                       \right),
\]
we obtain
\[L_0[\check{\mathcal{P}}\left(\eta_{-1}\right),\check\tau(W_1)]L_0^{-1}=\left(
                         \begin{array}{ccc}
                           -l_1f'S_4\bar{u}^{\sharp}J_2l_1^{-1} &  &  \\
                           & -l_0(f'^{\sharp}J_2\bar{u}S_4-S_4\bar{u}^{\sharp}Jf')l_0^{-1} &  \\
                            &  & l_4J_2\bar{u}S_4f'^{\sharp}l_4^{-1}  \\
                         \end{array}
                       \right).\]
Now Lemma \ref{lemma-iso4} follows.
\hfill$\Box $\\
\\
{\em Proof of Lemma \ref{lemma-iso5}:}
  By \eqref{eq-iso-m-c1} in  Lemma \ref{lemma-iso4}, there exists a frame $\check{F}$ such that $(1,0)-$part $\check{\alpha}'$  of the Maurer-Cartan form of $\check{F}$ has the form
  \[\left(
   \begin{array}{cccccccc}
     \check{c}_{11} & \check{c}_{12} & \check{b}_{11} & \check{b}_{12}  & \check{b}_{13} & \check{b}_{14}  & 0 & 0 \\
    \check{c}_{21} & \check{c}_{22} & \check{b}_{21} & \check{b}_{22}  & \check{b}_{23} & \check{b}_{24}  & 0 & 0 \\
       0   &    0   & \check{s}_{11} & \check{s}_{12} & \check{s}_{13} & 0 &  -\check{b}_{24} &  -\check{b}_{14} \\
       0   &    0   & \check{s}_{21} & \check{s}_{22} & 0   &  -\check{s}_{13} &  -\check{b}_{23} &  -\check{b}_{13} \\
       0   &    0   & \check{s}_{31} & 0 & \check{s}_{33} & -\check{s}_{12} &  -\check{b}_{22} &  -\check{b}_{12} \\
       0   &    0   & 0& -\check{s}_{31} & -\check{s}_{21} & -\check{s}_{11}  &  -\check{b}_{21} &  -\check{b}_{11} \\
       0   &    0   & 0& 0 & 0 & 0  &  -\check{c}_{22} &  -\check{c}_{12} \\
       0   &    0   & 0& 0 & 0 & 0  &  -\check{c}_{21} &  -\check{c}_{11} \\
   \end{array}
 \right)dz.\]
Set
$F=\mathcal{P}^{-1}(\tilde{F})=(\phi_1,\phi_2,\phi_3,\phi_4,\psi_1,\psi_2,\psi_3,\psi_{4}).$  By \eqref{eq-iso-1} in Lemma \ref{lemma-iso1}, we derive that
\[\alpha'=F^{-1}F_{z}dz=\check{\mathcal{P}}^{-1}(\tilde \alpha')=\left(
                   \begin{array}{cc}
                     A_1 & \lambda^{-1}B_1 \\
                     -\lambda^{-1}B_1^tI_{1,3} & A_2 \\
                   \end{array}
                 \right) d z,\]
                 with
                 \[A_1=\left(
   \begin{array}{cccccccc}
     0 & s_{22} & s_{13} &s_{14} \\
     s_{22} & 0 & s_{23} &  s_{24}\\
     s_{13} & -s_{23} &  0 & -is_{11}  \\
     s_{14} & -s_{24} & is_{11} & 0  \\
    \end{array}
 \right),~~\ ~~ \left\{\begin{split}
 2s_{13}&= -i(\check{s}_{12}-\check{s}_{13})-i(\check{s}_{31}-\check{s}_{21}),\\
 2s_{14}&=(\check{s}_{12}-\check{s}_{13})+(\check{s}_{21}-\check{s}_{31}),\\
 2s_{23}&= i(\check{s}_{12}+\check{s}_{13}+\check{s}_{21}+\check{s}_{31}), \\
 2s_{24}&=(\check{s}_{12}+\check{s}_{13})+(\check{s}_{31}+\check{s}_{21}), \\ \end{split}\right.\]
  \[A_2=\frac{1}{2}\left(
                      \begin{array}{cccc}
                        0 & -2i\check{c}_{22} & \check{c}_{21}-\check{c}_{12} & -i(\check{c}_{12}+\check{c}_{21}) \\
                        2i\check{c}_{11} & 0 & i(\check{c}_{12}+\check{c}_{21}) & \check{c}_{12}-\check{c}_{21} \\
                        \check{c}_{12}-\check{c}_{21} & -i(\check{c}_{12}+\check{c}_{21}) & 0 & -2i\check{c}_{11} \\
                        i(\check{c}_{12}+\check{c}_{21}) & \check{c}_{21}-\check{c}_{12} & 2i\check{c}_{11} &  0  \\
                      \end{array}
                    \right),
      \]
     and
\begin{equation}\begin{split}B_1&=\frac{1}{2}\left(
   \begin{array}{cccc}
    i(\check{b}_{23}-\check{b}_{22}) & -(\check{b}_{23}-\check{b}_{22})   & i(\check{b}_{13}-\check{b}_{12}) & -(\check{b}_{13}-\check{b}_{12})\\
    i(\check{b}_{23}+\check{b}_{22}) & -(\check{b}_{23}+\check{b}_{22})   & i(\check{b}_{13}+\check{b}_{12}) & -(\check{b}_{13}+\check{b}_{12})\\
    \check{b}_{24}-\check{b}_{21}   & i(\check{b}_{24}-\check{b}_{21})   & \check{b}_{14}-\check{b}_{11} & i(\check{b}_{14}-\check{b}_{11})\\
    i(\check{b}_{24}+\check{b}_{21}) & -(\check{b}_{24}+\check{b}_{21})   & i(\check{b}_{14}+\check{b}_{11}) & -(\check{b}_{14}+\check{b}_{11})\\
\end{array}
 \right)\\
 &=\left(
           \begin{array}{cccc}
             h_{11} & ih_{11} & h_{13} & ih_{13} \\
             h_{21} & ih_{21} & h_{23} & ih_{23} \\
             h_{31} & ih_{31} & h_{33} & ih_{33} \\
             h_{41} & ih_{41} & h_{43} & ih_{43} \\
           \end{array}
         \right)
  .\end{split}
\end{equation}
Therefore, one obtains
\begin{equation}\label{eq-iso-derivative1}\left\{\begin{split}\phi_{1z}&=\lambda^{-1}(h_{11}(\psi_1+i\psi_2)+ h_{13}(\psi_3+i\psi_4)) \ \mod \{\phi_{1},\phi_{2},\phi_{3},\phi_{4}\}\\
\phi_{jz}&=-\lambda^{-1}(h_{j1}(\psi_1+i\psi_2)+ h_{j3}(\psi_3+i\psi_4)) \ \mod \{\phi_{1},\phi_{2},\phi_{3},\phi_{4}\}, \ \ j=2,3,4.\end{split}\right.
\end{equation}
Now assume  that $Y$ is a canonical lift of the   Willmore surface $y$. Note that $Span_{\C}\{Y,Y_z,Y_{\bar{z}},N\}=Span_{\C}\{\phi_{1},\phi_{2},\phi_{3},\phi_{4}\}.$
So $Y_z$ is a linear combination of $\{\phi_{1},\phi_{2},\phi_{3},\phi_{4}\}$.
Then we compute the Hopf differential of $Y$:
\begin{equation*}\begin{split}\kappa&=Y_{zz} \mod \{Y,Y_z,Y_{\bar{z}},N\}\\
&= \lambda^{-1}k_1(\psi_1+i\psi_2)+\lambda^{-1}k_3(\psi_3+i\psi_4),\\
\end{split} \end{equation*}
with $k_1,k_3$ some complex-valued functions. Whence $\langle\kappa,\kappa\rangle\equiv0$, i.e., $\kappa$ is isotropic. To show that $Y$ is totally isotropic, we need only to verify that $D_{z}\kappa$ is isotropic. From the Maurer-Cartan form of $F$, we derive that
\begin{equation*}\begin{split} \psi_{1z}&=i\check{c}_{22}\psi_2+\frac{\check{c}_{12}-\check{c}_{21}}{2}\psi_3+\frac{i(\check{c}_{12}+\check{c}_{21})}{2}\psi_4 \ \mod \{\phi_{1},\phi_{2},\phi_{3},\phi_{4}\}, \\\psi_{2z}&=-i\check{c}_{22}\psi_1+\frac{-i(\check{c}_{12}+\check{c}_{21})}{2}\psi_3+\frac{\check{c}_{12}-\check{c}_{21}}{2}\psi_4 \ \mod \{\phi_{1},\phi_{2},\phi_{3},\phi_{4}\}, \\
\psi_{3z}&=\frac{\check{c}_{21}-\check{c}_{12}}{2}\psi_1+\frac{i(\check{c}_{12}+\check{c}_{21})}{2}\psi_2+i\check{c}_{11}\psi_4 \ \mod \{\phi_{1},\phi_{2},\phi_{3},\phi_{4}\}, \\
\psi_{4z}&=\frac{-i(\check{c}_{12}+\check{c}_{21})}{2}\psi_1+\frac{\check{c}_{21}-\check{c}_{12} }{2}\psi_2-i\check{c}_{11}\psi_3 \ \mod \{\phi_{1},\phi_{2},\phi_{3},\phi_{4}\}, \\
\end{split} \end{equation*}
So
\[\left\{\begin{split}(\psi_1+i\psi_2)_{z}&=\check{c}_{22}(\psi_1+i\psi_2)+\check{c}_{12}(\psi_3+i\psi_4) \ \mod \{\phi_{1},\phi_{2},\phi_{3},\phi_{4}\}, \\
(\psi_3+i\psi_4)_{z}&=\check{c}_{21}(\psi_1+i\psi_2)+\check{c}_{11}(\psi_3+i\psi_4) \ \mod \{\phi_{1},\phi_{2},\phi_{3},\phi_{4}\}. \\
\end{split}\right.\]
As a consequence, we obtain that
\[D_z\kappa= \lambda^{-1}\left(\delta_1(\psi_1+i\psi_2)+\delta_3(\psi_3+i\psi_4)\right)\]
for some complex valued function $\delta_1$, $\delta_2$. This indicates that $D_z\kappa$ is also isotropic, i.e. $Y$ as well as $y$ is totally isotropic. \hfill$\Box $

\subsection{An Algorithm to derive Willmore surfaces from frames}

This subsection is to derive an algorithm permitting to read off $y$ from the frame $F$. Although the harmonic maps have been constructed in the above  subsections, to obtain the Willmore surfaces from the harmonic maps needs more computations. We retain the notation in the proof of Lemma \ref{lemma-iso5}.

Set
\[B_1=(h_1,ih_1,h_3,ih_3),~ \ \hbox{ with }\ h_j=(h_{1j},h_{2j},h_{3j},h_{4j})^t, \ j=1,\ 3.\]
Since $B_1$ satisfies $B_1^tI_{1,3}B_1=0$, we have
$h_j^tI_{1,3}h_l=0,\ j, \ l=1,\ 3.$  Therefore $h_1$ and $h_2$ are contained in one of the following two subspaces (see also \cite{Wang-1}):
\[Span_{\C}\left\{\left(
                    \begin{array}{c}
                      1+\rho_1\rho_2 \\
                      1-\rho_1\rho_2 \\
                      \rho_1+\rho_2 \\
                      -i(\rho_1-\rho_2)\\
                    \end{array}
                  \right), \left(
                    \begin{array}{c}
                      \rho_1  \\
                       -\rho_1 \\
                      1 \\
                      i\\
                    \end{array}
                  \right)
\right\},\ \hbox{ or }\ Span_{\C}\left\{\left(
                    \begin{array}{c}
                      1+\rho_1\rho_2 \\
                      1-\rho_1\rho_2 \\
                      \rho_1+\rho_2 \\
                      -i(\rho_1-\rho_2)\\
                    \end{array}
                  \right),\left(
                    \begin{array}{c}
                      \rho_2  \\
                       -\rho_2 \\
                      1 \\
                      -i\\
                    \end{array}
                  \right)
\right\}.\]

Let $Y$ be a canonical lift of $y$. Hence $Y\in Span_{\R}\{\phi_{1},\phi_{2},\phi_{3},\phi_{4}\}$. Since $Y$ is real and lightlike, we may assume that \begin{equation}\label{eq-y}
Y=\hat{\rho}_0\left((1+|\hat{\rho}_1|^2)\phi_1+(1-|\hat{\rho}_1|^2)\phi_2+(\hat{\rho}_1+\bar{\hat{\rho}}_1)\phi_3
-i(\hat{\rho}_1-\bar{\hat{\rho}}_1)\phi_4\right),\end{equation}
with $\hat{\rho}_0\neq0$.
A straightforward computation by use of \eqref{eq-iso-derivative1} yields
\begin{equation*}\begin{split}
Y_z=&
\hat{\rho}_0\left((1+|\hat{\rho}_1|^2)h_{11}-(1-|\hat{\rho}_1|^2)h_{21} +(\hat{\rho}_1+\bar{\hat{\rho}}_1)h_{31}-i(\hat{\rho}_1-\bar{\hat{\rho}}_1)h_{41}\right)(\psi_1+i\psi_2) \\
 & ~ \ +\hat{\rho}_0\left((1+|\hat{\rho}_1|^2)h_{13}-(1-|\hat{\rho}_1|^2)h_{23} +(\hat{\rho}_1+\bar{\hat{\rho}}_1)h_{33}-i(\hat{\rho}_1-\bar{\hat{\rho}}_1)h_{43}\right)(\psi_3+i\psi_4)\\
 &~
  \ \mod \{\phi_{1},\phi_{2},\phi_{3},\phi_{4}\}.\\
 \end{split}\end{equation*}
Hence, to ensure that $Y_z\in Span_{\C}\{\phi_{1},\phi_{2},\phi_{3},\phi_{4}\}$, $\hat{\rho}_1$ needs to satisfy
\begin{equation}\label{eq-iso-derivative2}\left\{\begin{split}
 & (1+|\hat{\rho}_1|^2)h_{11}-(1-|\hat{\rho}_1|^2)h_{21} +(\hat{\rho}_1+\bar{\hat{\rho}}_1)h_{31}-i(\hat{\rho}_1-\bar{\hat{\rho}}_1)h_{41}=0,\\
 & (1+|\hat{\rho}_1|^2)h_{13}-(1-|\hat{\rho}_1|^2)h_{23} +(\hat{\rho}_1+\bar{\hat{\rho}}_1)h_{33}-i(\hat{\rho}_1-\bar{\hat{\rho}}_1)h_{43}=0. \\
 \end{split}\right.
 \end{equation}
 Without loss of generality we assume that
  $$h_1=\rho_0(1+\rho_1\rho_2,1-\rho_1\rho_2,\rho_1+\rho_2,-i(\rho_1-\rho_2))^t.$$

If the maximal rank of $B_1$ is $1$, then $$h_3 =\rho_{01}(1+\rho_1\rho_2,1-\rho_1\rho_2,\rho_1+\rho_2,-i(\rho_1-\rho_2))^t.$$ So \eqref{eq-iso-derivative2} is equivalent to
$(\hat{\rho}_1-\rho_1)( \rho_2-\bar{\hat{\rho}}_1)=0.$ Hence $$\hat{\rho}_1=\rho_1 \ \hbox{ or } \ \hat{\rho}_1=\bar{\rho}_2.$$
These two solutions provide a pair of dual Willmore (therefore S-Willmore) surfaces $y$ and $\hat{y}$ with the same conformal Gauss map.

If the maximal rank of $B_1$ is $2$, then
$$h_3 =\rho_{01}(1+\rho_1\rho_2,1-\rho_1\rho_2,\rho_1+\rho_2,-i(\rho_1-\rho_2))^t+\rho_{02}(\rho_1,-\rho_1,1,i)^t,$$
 or
 $$h_3 =\rho_{01}(1+\rho_1\rho_2,1-\rho_1\rho_2,\rho_1+\rho_2,-i(\rho_1-\rho_2))^t+\rho_{02}(\rho_2,-\rho_2,1,-i)^t.$$
 For the first case, \eqref{eq-iso-derivative2} is equivalent to
$$\hat{\rho}_1=\rho_1.$$
 For the second case, \eqref{eq-iso-derivative2} is equivalent to
$$\hat{\rho}_1=\bar{\rho}_2.$$
In both cases, we obtain a unique non-S-Willmore Willmore surface.

 From the above discussions it is clear that it is necessary to obtain the first four columns of $F$. By \eqref{eq-iso-frame}, $\check{F}$ can be derived from the Iwasawa decomposition. Set $\check{F}=(\mathrm{f}_{jl})$, $j,l=1,\cdots,8$, and $F=\check{\mathcal{P}}^{-1}(\check{F})$. Writing \[F=(\phi_1,\phi_2,\phi_3,\phi_4,\psi_1,\psi_2,\psi_3,\psi_{4}),\]
 and setting
$(\hat\phi_1,\hat\phi_2,\hat\phi_3,\hat\phi_4)=(\phi_1+\phi_2,\phi_1-\phi_2,  \phi_3-i\phi_4,\phi_3+i\phi_4) $
 one obtains straightforwardly from \eqref{eq-iso-1} that
\begin{equation}\label{eq-iso-frame2}
(\hat\phi_1,\hat\phi_2,\hat\phi_3,\hat\phi_4)= \left(
                               \begin{array}{cccc}
                                 ( \mathrm{f}_{44}-\mathrm{f}_{54}) & -(\mathrm{f}_{45}-\mathrm{f}_{55})  & -i(\mathrm{f}_{46}-\mathrm{f}_{56})  & i(\mathrm{f}_{43}-\mathrm{f}_{53})  \\
                                 (\mathrm{f}_{44}+\mathrm{f}_{54}) & -(\mathrm{f}_{45}+\mathrm{f}_{55})  & -i(\mathrm{f}_{46}+\mathrm{f}_{56})  & i(\mathrm{f}_{43}+\mathrm{f}_{53})  \\
                                 -i(\mathrm{f}_{34}-\mathrm{f}_{64}) & i(\mathrm{f}_{35}-\mathrm{f}_{65})  & -(\mathrm{f}_{36}-\mathrm{f}_{66})  & (\mathrm{f}_{33}-\mathrm{f}_{63})   \\
                                 (\mathrm{f}_{34}+\mathrm{f}_{64}) & -(\mathrm{f}_{35}+\mathrm{f}_{65})  &  -i(\mathrm{f}_{36}+\mathrm{f}_{66})  & i(\mathrm{f}_{33}+\mathrm{f}_{63})   \\
                                 -i(\mathrm{f}_{24}-\mathrm{f}_{74}) & i(\mathrm{f}_{25}-\mathrm{f}_{75})  & -(\mathrm{f}_{26}-\mathrm{f}_{76})  & (\mathrm{f}_{23}-\mathrm{f}_{73})   \\
                                 (\mathrm{f}_{24}+\mathrm{f}_{74}) & -(\mathrm{f}_{25}+\mathrm{f}_{75})  & -i(\mathrm{f}_{26}+\mathrm{f}_{76})  & i(\mathrm{f}_{23}+\mathrm{f}_{73})   \\
                                 -i(\mathrm{f}_{14}-\mathrm{f}_{84}) &i(\mathrm{f}_{15}-\mathrm{f}_{85})  &  -(\mathrm{f}_{16}-\mathrm{f}_{86})  & (\mathrm{f}_{13}-\mathrm{f}_{83})  \\
                                 (\mathrm{f}_{14}+\mathrm{f}_{84}) & -(\mathrm{f}_{15}+\mathrm{f}_{85})  & -i(\mathrm{f}_{16}+\mathrm{f}_{86})  & i(\mathrm{f}_{13}+\mathrm{f}_{83})   \\
                               \end{array}
                             \right).\end{equation}

\section{Appendix B: Construction of Examples}

\subsection{Proof of Theorem \ref{thm-min-iso}}\ \hspace{1mm}  From the procedure shown in Section 4.3, to derive the expression of $y$, one needs to figure out $B_1$ of the Maurer-Cartan form and the first four columns of the frame $F$.
Applying Lemma \ref{lemma-iso2} and Lemma \ref{lemma-iso3} to $ \check{\mathcal{P}}(\eta)$, $F$ and the Maurer-Cartan form can be derived by solving equation \eqref{eq-iso-mc:1} for the Iwasawa decomposition. Therefore we have three steps to derive $y$:

  1. Computation of the first four columns of $F$;

  2. Computation of the Maurer-Cartan form of $F$;

  3. Computation of $Y$.\\

{\em Step 1: Computation of the first four columns of $F$.}
By \eqref{eq-B1-to-f}, it is straightforward to derive that \[\check{\mathcal{P}}(\eta)=\lambda^{-1}\left(
                                \begin{array}{ccc}
                                  0 & \check{f}  & 0 \\
                                  0 & 0 & -\check{f}^{\sharp} \\
                                  0 & 0 & 0 \\
                                \end{array}
                              \right)dz\]
                              with
                              \[\check{f}=\left(
                     \begin{array}{cccc}
                       0&  0 & 0 & 0 \\
                       0 & f_2'&0 &  f_4'  \\
                     \end{array}
                   \right), \ ~\hbox{ and }\  ~ f=\int_0^z\check{f}dz=\left(
                     \begin{array}{cccc}
                       0&  0 & 0 & 0 \\
                       0 & f_2 &  0&  f_4 \\
                     \end{array}
                   \right).\]
Since $f\check{f}=0$, we obtain $g=0$. And a straightforward computation yields
\[d=(d_{ij})=\left(
                                    \begin{array}{cc}
                                      1 +  |f_4|^2& 0 \\
                                     0& 1\\
                                    \end{array}
                                  \right),\ ~\hbox{ and }\  ~  d^{-1}=\left(
                                    \begin{array}{cc}
                                       \frac{1}{|d|} & 0 \\
                                      0 &  1\\
                                    \end{array}
                                  \right),\ ~\hbox{ with  }\  ~ |d|= 1+|f_4|^2.\]
 Since $W_0\in G(8,\mathbb{C})$, we derive
 \[a^t=Jd^{-1}J=\left(
                                    \begin{array}{cc}
                                      1  & 0\\
                                      0 &   \frac{1}{|d|}\\
                                    \end{array}
                                  \right)= a.\]
By \eqref{eq-iso-mc:1B}, we have
\[ u= \frac{1}{|d|} \left(
                                     \begin{array}{cccc}
                                     0&0&0&0 \\
                                     0 & f_2 & 0  &  f_4 \\
                                     \end{array}
                                   \right).\]
Substituting these into \eqref{eq-iso-mc:1C}, we obtain
\begin{equation*}q=(q_{ij}) =\left(
                                                                            \begin{array}{cccc}
                                                                               \frac{1}{|d|}  & -\frac{\bar{f}_2f_4 }{|d|} & 0&  0 \\
                                                                              0 & 1 & 0  &  0\\
                                                                              -\frac{f_2\bar{f}_4}{|d|} & \frac{|f_2f_4|^2}{|d|}&  1 & \bar{f}_2f_4 \\
                                                                              0 &  f_2\bar{f}_4  & 0  &  |d| \\
                                                                            \end{array}
                                                                          \right).\end{equation*}
Set
\[l_0=\left(
    \begin{array}{cccc}
     \frac{1}{\sqrt{|d|}} & -\frac{\bar{f}_2f_4}{\sqrt{|d|}}  & 0  & 0 \\
     0  & 1& 0 &  0 \\
    0 & 0 & 1 & \bar{f}_2f_4 \\
     0 &0  &0  &   \sqrt{|d|}\\\
    \end{array}
  \right),\ ~\hbox{ with }\  ~  l_0^{-1}=\left(
    \begin{array}{cccc}
      \sqrt{|d|}  &  \bar{f}_2f_4 & 0  & 0 \\
     0  & 1& 0 &0 \\
    0 & 0 & 1 &  -\frac{\bar{f}_2f_4}{\sqrt{|d|}} \\
     0 &0  &0  &   \frac{1}{\sqrt{|d|}}\\\
    \end{array}
  \right)
. \]
 It is straightforward to verify that $l_0\in G(4,\mathbb{C})$ and $q=\check{J}_4\bar{l}_0^t\check{J}_4l_0.$
By \eqref{eq-iso-frame}, we have
\[\left(
                               \begin{array}{cccc}
                                  \mathrm{f}_{13} & \mathrm{f}_{14} &\mathrm{f}_{15}     &  \mathrm{f}_{16}   \\
                                  \mathrm{f}_{23} & \mathrm{f}_{24} &\mathrm{f}_{25}     &  \mathrm{f}_{26}   \\
                                  \mathrm{f}_{33} & \mathrm{f}_{34} &\mathrm{f}_{35}     &  \mathrm{f}_{36}   \\
                                  \mathrm{f}_{43} & \mathrm{f}_{44} &\mathrm{f}_{45}     &  \mathrm{f}_{46}   \\
                                  \mathrm{f}_{53} & \mathrm{f}_{54} &\mathrm{f}_{55}     &  \mathrm{f}_{56}   \\
                                  \mathrm{f}_{63} & \mathrm{f}_{64} &\mathrm{f}_{65}     &  \mathrm{f}_{66}   \\
                                  \mathrm{f}_{73} & \mathrm{f}_{74} &\mathrm{f}_{75}     &  \mathrm{f}_{76}   \\
                                  \mathrm{f}_{83} & \mathrm{f}_{84} &\mathrm{f}_{85}     &  \mathrm{f}_{86}   \\
                               \end{array}
                             \right)= \left(
        \begin{array}{ccc}
            f+gJ\bar{u}S_0 \\
             I-f^{\sharp}J\bar{u}S_0 \\
            J \bar{u}S_0  \\
        \end{array}
      \right)l_0^{-1}=\left(
                                             \begin{array}{cccc}
                       0&  0 & 0 & 0 \\
                       0  & f_2& 0 & \frac{f_4}{\sqrt{|d|}}\\
                       \frac{1}{\sqrt{|d|}} & 0 &0 & 0 \\
                       0 & 1  & 0& 0 \\
                       -\frac{f_2\bar{f}_4}{\sqrt{|d|}} & -|f_2|^2 & 1 & -\frac{\bar{f}_2f_4}{\sqrt{|d|}} \\
                       0 & 0 & 0 & \frac{1}{\sqrt{|d|}} \\
                       \frac{\bar{f}_4}{\sqrt{|d|}} &  \bar{f}_2 & 0 & 0   \\
                       0&0&0&0 \\
                                             \end{array}
                                           \right).\]

{\em Step 2: Computation  of the Maurer-Cartan form of $F$.}
Applying Lemma \ref{lemma-iso4}, the $l_1\check{f}l_0^{-1}$ part of the Maurer-Cartan form of $\check{F}$ is of the form
\[l_1\check{f}l_0^{-1}=\left(
                     \begin{array}{cccc}
                       0&  0 & 0 & 0 \\
                       0& \frac{f_2'}{\sqrt{|d|}} & 0  &  \frac{f_4'}{|d|} \\
                     \end{array}
                   \right)\ \Longrightarrow \ B_1=\frac{1}{2}\left(
                     \begin{array}{cccc}
                      -\frac{if_2'}{\sqrt{|d|}} &  \frac{f_2'}{\sqrt{|d|}} & 0 & 0 \\
                      \frac{if_2'}{\sqrt{|d|}}&  -\frac{f_2'}{\sqrt{|d|}} & 0 & 0 \\
                       \frac{f_4'}{|d|} &  \frac{if_4'}{|d|} & 0 & 0  \\
                       \frac{if_4'}{|d|} & - \frac{f_4'}{|d|} & 0 & 0  \\
                     \end{array}
                   \right).\]
\ \\

{\em Step 3: Computation  of  $Y$.}
     Here we follow the discussions in Section 4.3.  First from the Maurer-Cartan form we have
\[D_z\phi_1=-\frac{if_2'}{2\sqrt{|d|}}(\psi_1+i\psi_2),\ \ \ D_z\phi_2=-\frac{if_2'}{2\sqrt{|d|}}(\psi_1+i\psi_2),\ \]
\[ D_z\phi_3=-\frac{f_4'}{2|d|}(\psi_1+i\psi_2),\ \ \ D_z\phi_4=-\frac{if_4'}{2|d|}(\psi_1+i\psi_2),\]
with $D$ the normal connection. Set  $E_1=\phi_1-\phi_2,\ \hat{E}_1=\phi_1+\phi_2,\ E_2=\phi_3-i\phi_4.$
Assume that
\[Y=\hat{E}_1+\bar\mu E_2+\mu \bar{E}_2+|\mu|^2E_1\]
for some $\mu$. We have that
$D_zY=\left(\frac{-if_2'}{\sqrt{|d|}}-\bar\mu\frac{f_4'}{|d|}\right)(\psi_1+i\psi_2).$
So $D_zY=0$ if and only if $\bar\mu=-\frac{if_2'\sqrt{|d|}}{f_4'}$. This yields \eqref{eq-min-iso}. \hfill$\Box$

\begin{remark} Note that the above Iwasawa decomposition only blows up at the poles of $f_2$ and $f_4$, showing that the decomposition of  the corresponding  harmonic map does not cross  the boundary of a Iwasawa big cell.
\end{remark}
\subsection{Proof of Theorem \ref{thm-example}}
 The proof is the same as the above one, except that the computation will be more tedious. Moreover, we also need to check the immersion properties of $y$. Hence we have four steps:

  1. Computation of the first four columns of $F$;

  2. Computation of the Maurer-Cartan form of $F$;

  3. Computation of $Y$;

  4. Computation of metric of $Y$.

{\em Step 1: Computation  of the first four columns of $F$.}
Since $\eta$ is of the form stated in \eqref{eq-example-np}, by \eqref{eq-B1-to-f}, it is easy to derive that
 \[\check{\mathcal{P}}(\eta)=\lambda^{-1}\left(
                                \begin{array}{ccc}
                                  0 & \check{f}  & 0 \\
                                  0 & 0 & -\check{f}^{\sharp} \\
                                  0 & 0 & 0 \\
                                \end{array}
                              \right)dz\]
                              with
 \[\check{f}=\left(
                     \begin{array}{cccc}
                       0&  0 & -1 & -z \\
                       2 & -2z &0&  0 \\
                     \end{array}
                   \right), \ ~\hbox{ and }\  ~ f=\int_0^z\check{f}dz=\left(
                     \begin{array}{cccc}
                       0&  0 & -z & -\frac{z^2}{2} \\
                       2z & -z^2 &0&  0 \\
                     \end{array}
                   \right).\]
Note now
\[g=-\int_0^z f (\check{f}^{\sharp})dz=\frac{z^3}{3}\left(
       \begin{array}{cc}
        -1 & 0 \\
         0 & 1 \\
       \end{array}
     \right).\]
  Set $r=\sqrt{|z|^2}$.
By the first equation of \eqref{eq-iso-mc:1}, we have
\[d=(d_{ij})=\left(
                                    \begin{array}{cc}
                                      1+4r^2+\frac{r^6}{9} & r^2\bar{z} \\
                                      r^2z & 1+\frac{r^4}{4}+\frac{r^6}{9} \\
                                    \end{array}
                                  \right),\ d^{-1}=\frac{1}{|d|}\left(
                                    \begin{array}{cc}
                                      1+\frac{r^4}{4}+\frac{r^6}{9} & -r^2\bar{z} \\
                                      -r^2z &  1+4r^2+\frac{r^6}{9}\\
                                    \end{array}
                                  \right).\]
with
$|d|=(1+4r^2+\frac{r^4}{4}+\frac{r^6}{9})(1+\frac{r^6}{9}).$
 Since $W_0\in G(8,\mathbb{C})$, we have
 \[a =(Jd^{-1}J)^t=\frac{1}{|d|}\left(
                                    \begin{array}{cc}
                                      1+4r^2+\frac{r^6}{9} & -r^2\bar{z} \\
                                      -r^2z & 1+\frac{r^4}{4}+\frac{r^6}{9} \\
                                    \end{array}
                                  \right).
 \]
    It is easy to verify that
    \[a=\bar{l_1}^tl_1, \ \ \hbox{ with }\ \ l_1=\left(
       \begin{array}{cc}
         \frac{\sqrt{d_{11}}}{\sqrt{|d|}} & -\frac{d_{12}}{\sqrt{|d|}\sqrt{d_{11}}} \\
         0& \frac{1}{\sqrt{d_{11}}} \\
       \end{array}
     \right)=\frac{1}{\sqrt{|d|}}\left(
                                    \begin{array}{cc}
                                      \sqrt{1+4r^2+\frac{r^6}{9}} & \frac{-r^2\bar{z}}{\sqrt{1+4r^2+\frac{r^6}{9}}} \\
                                      0 &  \frac{1}{\sqrt{1+4r^2+\frac{r^6}{9}}} \\
                                    \end{array}
                                  \right).\]
Moreover, by the second equation of \eqref{eq-iso-mc:1}, one computes
\[u^{\sharp}=\frac{1}{|d|} \left(
                                     \begin{array}{cc}
                                     \frac{r^2z^3}{2}\left(1+\frac{4r^2}{3}\right)   & -\frac{z^2}{2}\left(1+\frac{4r^2}{3}\right)\left(1+4r^2+\frac{r^6}{9}\right) \\
                                       \frac{r^2z^2}{3}\left(2-\frac{r^6}{9}-\frac{r^4}{4}\right) & -z\left(1+4r^2-\frac{2r^6}{9}\right) \\
                                       -z^2\left(1+\frac{r^4}{4}+\frac{4r^6}{9}\right) & \frac{r^4z}{3}\left(4+4r^2+\frac{r^6}{9}\right) \\
                                       2z\left(1-\frac{r^4}{12}\right)\left(1+\frac{r^4}{4}+\frac{r^6}{9}\right) & -2r^4\left(1-\frac{r^4}{12}\right) \\
                                     \end{array}
                                   \right).\]
Moreover, substituting these  into  \eqref{eq-iso-mc:1C}, we obtain $q=(q_{ij})$ with
\begin{equation*} \begin{split}
                                       &|d|q_{11}=(1+4r^2-\frac{2r^6}{9})^2,\ |d|q_{12}=|d|\bar{q}_{31}=-2r^2z(1-\frac{r^4}{12})(1+4r^2-\frac{2r^6}{9}),\\
                                       &|d|q_{13}=|d|\bar{q}_{21}=-\frac{r^2z}{2}(1+\frac{4r^2}{3})(1+4r^2-\frac{2r^6}{9}),\ |d|q_{14}=|d|\bar{q}_{41}=-r^4z^2(1+\frac{4r^2}{3})(1-\frac{r^4}{12}),\\
                                     \end{split}
                                  \end{equation*}
\begin{equation*} \begin{split}
 &|d|q_{22}=|d|\bar{q}_{33}= (1+4r^2-\frac{2r^6}{9})(1+\frac{r^4}{4}+\frac{4r^6}{9}),\ \
                                        |d|q_{23}=\frac{r^6}{4}(1+\frac{4r^2}{3})^2 ,\\
                                       &|d|q_{24}=|d|\bar{q}_{43}= \frac{r^2z}{2}(1+\frac{4r^2}{3})(1+\frac{r^4}{4}+\frac{4r^6}{9}),\ \
                                        |d|q_{32}= 4r^6(1-\frac{r^4}{12})^2,\\
                                       &|d|q_{34}=|d|\bar{q}_{42}=2r^2z(1-\frac{r^4}{12})(1+\frac{r^4}{4}+\frac{4r^6}{9}) ,\ \
                                        |d|q_{44}=(1+\frac{r^4}{4}+\frac{4r^6}{9})^2.\\
                                     \end{split}
                                  \end{equation*}
Let $l_0=(l_{jl})$ be an upper triangular matrix of the form
\[l_0=\left(
    \begin{array}{cccc}
      \frac{ (1+4r^2-\frac{2r^6}{9})}{\sqrt{|d|}}  &  -\frac{2r^2z(1-\frac{r^4}{12})}{\sqrt{|d|}}  & -\frac{r^2z(1+\frac{4r^2}{3})}{2\sqrt{|d|}}  & -\frac{r^4z^2(1+\frac{4r^2}{3})(1-\frac{r^4}{12})}{\sqrt{|d|}(1+4r^2-\frac{2r^6}{9})} \\
     0  &\sqrt{|d|}& 0 & \frac{r^2z(1+\frac{4r^2}{3})\sqrt{|d|}}{2(1+4r^2-\frac{2r^6}{9})} \\
    0 & 0 &  \frac{1}{\sqrt{|d|}}  & \frac{2r^2z(1-\frac{r^4}{12})}{\sqrt{|d|}(1+4r^2-\frac{2r^6}{9})}\\
     0 &0  &0  &   \frac{\sqrt{|d|}}{ (1+4r^2-\frac{2r^6}{9})}\\
    \end{array}
  \right).\]
It is straightforward to check that $q=\check{J}_4\bar{l}_0^t\check{J}_4l_0$ and
 \[ l_0^{-1}=\left(
    \begin{array}{cccc}
      \frac{\sqrt{|d|}}{ (1+4r^2-\frac{2r^6}{9})}  &
       \frac{2r^2z(1-\frac{r^4}{12})}{\sqrt{|d|}(1+4r^2-\frac{2r^6}{9})} &
       \frac{r^2z(1+\frac{4r^2}{3})\sqrt{|d|}}{2(1+4r^2-\frac{2r^6}{9})} & -\frac{r^4z^2(1+\frac{4r^2}{3})(1-\frac{r^4}{12})}{\sqrt{|d|}(1+4r^2-\frac{2r^6}{9})} \\
      0 & \frac{1}{\sqrt{|d|}} & 0 &  -\frac{r^2z(1+\frac{4r^2}{3})}{2\sqrt{|d|}} \\
    0 & 0 &  \sqrt{|d|}  & -\frac{2r^2z(1-\frac{r^4}{12})}{\sqrt{|d|}}\\
    0  & 0 & 0 &  \frac{ (1+4r^2-\frac{2r^6}{9})}{\sqrt{|d|}}\\
    \end{array}
  \right).\]
 Assume that
 \[\check{F}=(\mathrm{f}_{jl})=\frac{1}{|d|^{\frac{3}{2}}(1+4r^2-\frac{2r^6}{9})}(\hat{\mathrm{f}}_{jl}).\] By \eqref{eq-iso-frame},
     we compute
   \begin{equation}\label{eq-false-iwa}
  \left(\hat{\mathrm{f}}_{jl}\right)=
    \left(
     \begin{array}{cccccccc}
      -\frac{r^8}{6\lambda}(1+\frac{4r^2}{3})  & \frac{zr^4}{3\lambda} &  -\frac{z(1+4r^2)|d|}{\lambda} & -\frac{z^2 \hat{\mathrm{f}}_{25}}{2} \\
        \frac{2z}{\lambda}\tilde{\mathrm{f}}_{23}  & -\frac{z^2}{\lambda} &  \frac{2z^2r^2|d|}{3\lambda} &  \frac{z^3r^2(1+\frac{4r^2}{3})}{2\lambda} \\
        \hat{\mathrm{f}}_{24}  &  2z r^2 &  -\frac{4zr^4|d|}{3} &  -z^2r^4(1+\frac{4r^2}{3}) \\
       -\frac{r^2\bar{z}}{2}\hat{\mathrm{f}}_{25}    & 1+4r^2+\frac{r^6}{9} & -r^2(1+4r^2)|d| &  -\frac{zr^2}{2}\tilde{\mathrm{f}}_{25} \\
      \frac{r^6\bar{z}}{2}(1+\frac{4r^2}{3})   & -r^4 & (1+4r^2+\frac{4r^6}{9})|d|   & \frac{r^6\bar{z}}{2}(1+\frac{4r^2}{3})  \\
      -r^4\bar{z}^2(1+\frac{4r^2}{3})   & 2\bar{z}r^2  &   -\frac{4\bar{z}r^4|d|}{3} &  \hat{\mathrm{f}}_{24}\\
      \frac{\lambda r^2\bar{z}^3}{2}(1+\frac{4r^2}{3})  &-\lambda\bar{z}  &  \frac{2\lambda \bar{z}^2r^2|d|}{3}  &   2\lambda\bar{z} \tilde{\mathrm{f}}_{23}  \\
      -\frac{\lambda\bar{z}^2}{2 }\tilde{\mathrm{f}}_{25} & \frac{\lambda\bar{z}r^4}{3} &  -\lambda \bar{z}(1+4r^2)|d|  & -\frac{\lambda r^8}{6}(1+\frac{4r^2}{3})  \\
     \end{array}
   \right)
 \end{equation}
 with $1\leq j \leq 8, 3\leq l\leq6$, and \begin{equation*} \begin{split}
                                       &\tilde{\mathrm{f}}_{23}=1+4r^2+\frac{r^4}{6}-\frac{2r^6}{9}+\frac{r^{10}}{54},\ \tilde{\mathrm{f}}_{25}= (1+\frac{4r^2}{3} ) (1+4r^2+\frac{r^6}{9} ),\\
                                       &\hat{\mathrm{f}}_{24}=1+4r^2-\frac{10r^6}{9}-\frac{8r^8}{9}-\frac{2r^{12}}{81}.\\
                                     \end{split}
                                  \end{equation*}

{\em Step 2: Computation  of the Maurer-Cartan form of $F$.}
By \eqref{eq-iso-m-c1} and \eqref{eq-iso-m-c2} of Lemma \ref{lemma-iso4}, the Maurer-Cartan form of $\check{F}$ has the expression
\begin{equation*}
\begin{split}
 l_1\check{f}l_0^{-1} & =\frac{1}{\sqrt{|d|}}\cdot l_1\cdot \left(
                                     \begin{array}{cccc}
  0&0&-|d|&-z(1+2r^2-\frac{r^6}{18})\\
  \frac{2|d|}{(1+4r^2-\frac{2r^6}{9})} & \frac{-2z(1+2r^2-\frac{r^6}{18})}{(1+4r^2-\frac{2r^6}{9})} &   \frac{zr^2(1+\frac{4r^2}{3})}{(1+4r^2-\frac{2r^6}{9})} & \frac{z^2r^2(1+\frac{4r^2}{3})(1+2r^2-\frac{r^6}{18})}{(1+4r^2-\frac{2r^6}{9})} \\
                                     \end{array}
                                   \right) \\
    & =\sqrt{|d|}\left(
       \begin{array}{cccc}
         \tilde{w}_1 & -\tilde{w}_1\rho & \tilde{w}_2 & \tilde{w}_2\rho \\
         \hat{w}_1 & -\hat{w}_1\rho & \hat{w}_2 & \hat{w}_2\rho \\
       \end{array}
     \right),
\end{split}
\end{equation*}
with
\[\rho=\frac{z(1+2r^2-\frac{r^6}{18})}{|d|},~~ ~l_1=\frac{1}{\sqrt{|d|}}\left(
                                    \begin{array}{cc}
                                      \sqrt{1+4r^2+\frac{r^6}{9}} & \frac{-r^2\bar{z}}{\sqrt{1+4r^2+\frac{r^6}{9}}} \\
                                      0 &  \frac{1}{\sqrt{1+4r^2+\frac{r^6}{9}}} \\
                                    \end{array}
                                  \right)=\left(
        \begin{array}{cc}
          \hat{l}_{11} & \hat{l}_{12}  \\
          0 & \hat{l}_{22}  \\
        \end{array}
      \right),\]
and \begin{equation*} \begin{split}
&\tilde{w}_1= \frac{2\hat{l}_{12}}{1+4r^2-\frac{2r^6}{9}},\ \ \tilde{w}_2=\frac{\hat{l}_{12}zr^2(1+\frac{4r^2}{3})}{1+4r^2-\frac{2r^6}{9}}-\hat{l}_{11},\ \hat{w}_1=  \frac{2\hat{l}_{22}}{1+4r^2-\frac{2r^6}{9}},\ \hat{w}_2= \frac{ \hat{l}_{22} zr^2(1+\frac{4r^2}{3})}{1+4r^2-\frac{2r^6}{9}}.\\
\end{split}
                                  \end{equation*}
Transforming back to $\mathfrak{so}(1,7,\C)$, we derive
\[\check{B}_1=\left(
                \begin{array}{cccc}
                  i(\hat{w}_2+\hat{w}_1\rho) & -(\hat{w}_2+\hat{w}_1\rho)  &  i(\tilde{w}_2+\tilde{w}_1\rho) & -(\tilde{w}_2+\tilde{w}_1\rho)  \\
                  i(\hat{w}_2-\hat{w}_1\rho) & -(\hat{w}_2-\hat{w}_1\rho)  &  i(\tilde{w}_2-\tilde{w}_1\rho) & -(\tilde{w}_2-\tilde{w}_1\rho)  \\
                  (\hat{w}_2\rho-\hat{w}_1) & i(\hat{w}_2\rho-\hat{w}_1)  &  (\tilde{w}_2\rho-\tilde{w}_1) & i(\tilde{w}_2\rho-\tilde{w}_1)  \\
                  i(\hat{w}_2\rho+\hat{w}_1) & -(\hat{w}_2\rho+\hat{w}_1)  &  i(\tilde{w}_2\rho+\tilde{w}_1) & -(\tilde{w}_2\rho+\tilde{w}_1)  \\
                \end{array}
              \right)=(h_1,ih_1,h_3,ih_3).
\]
\vspace{5mm}

{\em Step 3: Computation  of  $Y$.}
     Here we follow the discussion  in Section 4.3.
It is easy to verify that $h_1$ and $h_3$ can be expressed as a (functional) linear combination of
\[(1,1,-i\rho,\rho)^t\ \hbox{ and }\  (\rho,-\rho,i,1)^t.\]
Therefore one obtains easily that
$\check{\rho}_1=-i\rho$
is the unique solution to \eqref{eq-iso-derivative2}. Substituting $\check{\rho}_1$ into \eqref{eq-y}, we obtain
\begin{equation*}
\begin{split}
 [Y]  & =(1+|\rho|^2)\phi_1+(1-|\rho|^2)\phi_2-i(\rho-\bar{\rho})\phi_3+(\rho+\bar{\rho})\phi_4] \\
    & =[(\phi_1+\phi_2) +|\rho|^2(\phi_1-\phi_2)+i\bar\rho(\phi_3-i\phi_4)-i\rho(\phi_3+i\phi_4)].
\end{split}
\end{equation*}
Then by \eqref{eq-false-iwa}, \eqref{eq-iso-frame2} we have that
\begin{equation}
[Y]=\left[\frac{(1+4r^2-\frac{2r^6}{9})}{|d|^{\frac{5}{2}}}\left(
                          \begin{array}{c}
                            \left(1+r^2+\frac{5r^4}{4}+\frac{4r^6}{9}+\frac{r^8}{36}\right) \\
                            \left(1-r^2-\frac{3r^4}{4}+\frac{4r^6}{9}-\frac{r^8}{36}\right) \\
                            -i\left(z- \bar{z})(1+\frac{r^6}{9})\right) \\
                            \left(z+\bar{z})(1+\frac{r^6}{9})\right) \\
                            -i\left((\lambda^{-1}z^2-\lambda \bar{z}^2)(1-\frac{r^4}{12})\right) \\
                            \left((\lambda^{-1}z^2+\lambda \bar{z}^2)(1-\frac{r^4}{12})\right) \\
                            -i\frac{r^2}{2}(\lambda^{-1}h-\lambda \bar{h})(1+\frac{4r^2}{3}) \\
                            \frac{r^2}{2} (\lambda^{-1}h+\lambda \bar{h})(1+\frac{4r^2}{3})  \\
                          \end{array}
                        \right)\right].
\end{equation}

\vspace{5mm}
{\em Step 4: $[Y]$ being a global immersion.}
Let $x_{\lambda}$ be of the form \eqref{example1}. Then $x_{\lambda}:S^2\rightarrow S^6$ is well-defined on $S^2$ with $Y$ as its lift. Since
\[|x_{\lambda z}|^2|dz|^2=\frac{2+8r^2+\frac{r^4}{2}+\frac{4r^6}{9}+\frac{8r^8}{9}+\frac{r^{10}}{18}+\frac{2r^{12}}{81}}{\left(1+r^2+\frac{5r^4}{4}+\frac{4r^6}{9}+\frac{r^8}{36}\right)^2}|dz|^2,\]
$x$ has no branch point at $z\in \mathbb{C}$.
As to $\infty$, set $\tilde{z}=\frac{1}{z}$ and $\tilde{r}=\sqrt{|\tilde{z}|}$, we have that  $|x_{\lambda\tilde{z}}|^2|d\tilde{z}|^2=32|d\tilde{z}|^2$ at the point $\tilde{z}=0$, since
\[|x_{{\lambda}\tilde{z}}|^2|d\tilde{z}|^2=\frac{2\tilde{r}^{12}+8\tilde{r}^{10}+\frac{\tilde{r}^8}{2}+\frac{4\tilde{r}^6}{9}+\frac{8\tilde{r}^4}{9}+\frac{\tilde{r}^{2}}{18} +\frac{2}{81}}{\left(\tilde{r}^8+\tilde{r}^6+\frac{5\tilde{r}^4}{4}+\frac{4\tilde{r}^2}{9}+\frac{1}{36}\right)^2}|d\tilde{z}|^2,\]
     \hfill$\Box$

\begin{remark} Note that in the above Iwasawa decomposition there exists a circle $1+4r^2-\frac{2r^6}{9}=0$ such that the frame \eqref{eq-false-iwa} obtained from the Iwasawa decomposition blows up. However, this blowing up can be avoided by a change of frames and hence the corresponding harmonic map is in fact globally well defined. This also means that the decomposition of the corresponding  harmonic map does not  cross  the boundary of an Iwasawa big cell as the other examples. From this one  can  expect  that there are more interesting phenomena happening for harmonic maps into non-compact symmetric spaces comparing with the compact cases (see also \cite{B-R-S}, \cite{Ke1}).
\end{remark}

\

{\large \bf Acknowledgements}\ \ The author is thankful to Professor Josef Dorfmeister, Professor Changping Wang and Professor Xiang Ma for their suggestions and encouragement.   This work is supported by the Project 11201340 of NSFC.

{\small
\def\refname{Reference}

}

{\small
Peng Wang

Department of Mathematics

Tongji University, Siping Road 1239

Shanghai, 200092, P. R. China

{\em E-mail address}: {netwangpeng@tongji.edu.cn}
}
\end{document}